\def\namedlabel#1#2{\begingroup
	#2%
	\def\@currentlabel{#2}%
	\phantomsection\label{#1}\endgroup
}
\numberwithin{equation}{section}
\DeclareMathOperator{\Id}{Id}
\DeclareMathOperator{\b|}{\boldsymbol{|}}
\DeclareMathOperator{\wt}{wt}
\newcommand{\ubound}{B}
\newcommand{\C}{{\mathbb C}}
\newcommand{\cC}{{\mathcal C}}
\newcommand{\cH}{{\mathcal H}}
\newtheorem{thm}{Theorem}[section]
\newtheorem{prop}[thm]{Proposition}
\newtheorem{lem}[thm]{Lemma}
\newtheorem{Corollary}{Corollary}
\newtheorem{Theorem}{Theorem}
\newtheorem*{NoNumberTheorem}{Theorem}
\theoremstyle{remark}
\newtheorem{rem}[thm]{Remark}
\newtheorem{Remark}{Remark}
\theoremstyle{definition}
\newtheorem{definition}[thm]{Definition}
\title{Large Genus Asymptotics for Volumes of Strata of Abelian Differentials}
\author{Amol Aggarwal}
\begin{document}

\begin{abstract}
	
	In this paper we consider the large genus asymptotics for Masur-Veech volumes of arbitrary strata of Abelian differentials. Through a combinatorial analysis of an algorithm proposed in 2002 by Eskin-Okounkov to exactly evaluate these quantities, we show that the volume $\nu_1 \big( \mathcal{H}_1 (m) \big)$ of a stratum indexed by a partition $m = (m_1, m_2, \ldots , m_n)$ is $\big( 4 + o(1) \big) \prod_{i = 1}^n (m_i + 1)^{-1}$, as $2g - 2 = \sum_{i = 1}^n m_i$ tends to $\infty$. This confirms a prediction of Eskin-Zorich and generalizes some of the recent results of Chen-M\"{o}ller-Zagier and Sauvaget, who established these limiting statements in the special cases $m = 1^{2g - 2}$ and $m = (2g - 2)$, respectively.

	We also include an Appendix by Anton Zorich that uses our main result to deduce the large genus asymptotics for Siegel-Veech constants that count certain types of saddle connections.
\end{abstract}

\maketitle

\tableofcontents

\section{Introduction}

\label{Introduction}

\subsection{The Moduli Space of Abelian Differentials}

Fix a positive integer $g > 1$, and let $\mathcal{H} = \mathcal{H}_g$ denote the moduli space of pairs $(X, \omega)$, where $X$ is a Riemann surface of genus $g$ and $\omega$ is a holomorphic one-form on $X$. Equivalently, $\mathcal{H}$ is the total space of the Hodge bundle over the moduli space $\mathcal{M}_g$ of complex curves of genus $g$; $\mathcal{H}$ is typically referred to as the \emph{moduli space of Abelian differentials}.

For any $(X, \omega) \in \mathcal{H}$, the one-form $\omega$ has $2g - 2$ zeros (counted with multiplicity) on $X$. Thus, the moduli space of Abelian differentials can be decomposed as a disjoint union $\mathcal{H} = \bigcup_{m \in \mathbb{Y}_{2g - 2}} \mathcal{H} (m)$, where $m$ is ranged over all partitions\footnote{See Section \ref{Partitions} for our conventions and notation on partitions. In particular, $\mathbb{Y}_{2g - 2}$ denotes the set of partitions of size $2g - 2$.} of $2g - 2$, and $\mathcal{H} (m) \subset \mathcal{H}$ denotes the moduli space of pairs $(X, \omega)$ where $X$ is again a Riemann surface of genus $g$ and $\omega$ is a holomorphic differential on $X$ with $n$ distinct zeros of multiplicities $m_1, m_2, \ldots , m_n$. These spaces $\mathcal{H} (m)$ are (possibly disconnected \cite{CCMS}) orbifolds called \emph{strata}.

There is an action of the general linear group $\text{GL}_2 (\mathbb{R})$ on the moduli space $\mathcal{H}$ that preserves its strata $\mathcal{H} (m)$. This action is closely related to billiard flow on rational polygons \cite{RBF,TSOC,FS}; dynamics on translation surfaces \cite{RBF,TSOC,FS}; the theory of interval exchange maps \cite{SSCT,ETMF,RBF,MTSIEM,FS}; enumeration of square-tiled surfaces \cite{SSCT,FS}; and Teichm\"{u}ller geodesic flow \cite{ERG,FS}. We will not explain these topics further here and instead refer to the surveys \cite{RBF,TSOC,FS} for more information.

In any case, there exists an measure on $\mathcal{H}$ (or equivalently, on each stratum $\mathcal{H} (m)$) that is invariant with respect to the action of $\text{SL}_2 (\mathbb{R}) \subset \text{GL}_2 (\mathbb{R})$; this measure can be defined as follows. Let $m = (m_1, m_2, \ldots , m_n) \in \mathbb{Y}_{2g - 2}$, let $(X, \omega) \in \mathcal{H} (m)$ be a pair in the stratum corresponding to $m$, and define $k = 2g + n - 1$. Denote the zeros of $\omega$ by $p_1, p_2, \ldots , p_n \in X$, and let $\gamma_1, \gamma_2, \ldots , \gamma_k$ denote a basis of the relative homology group $H_1 \big( X, \{ p_1, p_2, \ldots , p_n \}, \mathbb{Z} \big)$. Consider the \emph{period map} $\Phi: \mathcal{H} (m) \rightarrow \mathbb{C}^k$ obtained by setting $\Phi (X, \omega) = \big( \int_{\gamma_1} \omega, \int_{\gamma_2} \omega, \ldots , \int_{\gamma_k} \omega \big)$. It can be shown that the period map $\Phi$ defines a local coordinate chart (called \emph{period coordinates}) for the stratum $\mathcal{H} (m)$. Pulling back the Lebesgue measure on $\mathbb{C}^k$ yields a measure $\nu$ on $\mathcal{H} (m)$, which is quickly verified to be independent of the basis $\{ \gamma_i \}$ and invariant under the action of $\text{SL}_2 (\mathbb{R})$ on $\mathcal{H} (m)$.

As stated, the volume $\nu \big( \mathcal{H} (m) \big)$ will be infinite since $(X, c \omega) \in \mathcal{H} (m)$ for any $(X, \omega) \in \mathcal{H} (m)$ and constant $c \in \mathbb{C}$. To remedy this issue, let $\mathcal{H}_1 (m) \subset \mathcal{H} (m)$ denote the moduli space of pairs $(X, \omega) \in \mathcal{H} (m)$ such that $\frac{\mathrm{i}}{2} \int_X \omega \wedge \overline{\omega} = 1$; this is the hypersurface of the stratum $\mathcal{H} (m)$ consisting of $(X, \omega)$ where $\omega$ has area one.

Let $\nu_1$ denote the measure induced by $\nu$ on $\mathcal{H}_1 (m)$. It was established independently by Masur \cite{ETMF} and Veech \cite{MTSIEM} that $\nu_1$ is ergodic on each connected component of $\mathcal{H}_1 (m)$ under the action of $\text{SL}_2 (\mathbb{R})$ and that the volume $\nu_1 \big( \mathcal{H}_1 (m) \big)$ is finite for each $m$. This volume $\nu_1 \big( \mathcal{H}_1 (m) \big)$ is called the \emph{Masur-Veech volume} of the stratum indexed by $m$.

\subsection{Explicit and Asymptotic Masur-Veech Volumes}

\label{Volumes}

Although the finiteness of the Masur-Veech volumes was established in 1982 \cite{ETMF,MTSIEM}, it was nearly two decades until mathematicians produced general ways of finding these volumes explicitly. One of the earlier exact evaluations of these volumes appeared in the paper \cite{SVMS} of Zorich (although he mentions that the idea had been independently suggested by Eskin-Masur and Kontsevich-Zorich two years earlier), in which he evaluates $\nu_1 \big( \mathcal{H}_1 (m) \big)$ for some partitions $m$ corresponding to small values of the genus $g$.

Through a different method, based on the representation theory of the symmetric group and asymptotic Hurwitz theory, Eskin-Okounkov \cite{ANBCTV} proposed a general algorithm that, given $g \in \mathbb{Z}_{> 1}$ and $m = (m_1, m_2, \ldots , m_n) \in \mathbb{Y}_{2g - 2}$, determines the volume of the stratum $\nu_1 \big( \mathcal{H}_1 (m) \big)$. Although this intricate algorithm did not lead to closed form identities, Eskin-Okounkov were able to use it to establish several striking properties of these volumes. For instance, they showed that $\nu_1 \big(\mathcal{H}_1 (m) \big) \in \pi^{2g} \mathbb{Q}$ for any $m \in \mathbb{Y}_{2g - 2}$, a fact that had earlier been predicted by Kontsevich-Zorich.

Once it is known that these volumes are finite and can in principle be determined, a question of interest is to understand how they behave as the genus $g$ tends to $\infty$. In the similar context of Weil-Petersson volumes, such questions were investigated at length by Mirzakhani-Zograf in \cite{GVRHSLG,LGAIMSC,LGAV}.

To that end, the algorithm of Eskin-Okounkov enabled Eskin to write a computer program to evaluate the volumes $\nu_1 \big( \mathcal{H}_1 (m) \big)$ for $m \in \mathbb{Y}_{2g - 2}$ such that $g \le 10$. Based on the numerical data provided by this program, Eskin and Zorich predicted in 2003 (although the conjecture was not published until over a decade later; see Conjecture 1 and equations (1) and (2) of \cite{VSDCLG}) that
\begin{flalign}
\label{estimatenu1}
\nu_1 \big( \mathcal{H}_1 (m) \big) = \displaystyle\frac{4}{\prod_{i = 1}^n (m_i + 1)}  \Bigg( 1 + \mathcal{O} \bigg( \displaystyle\frac{1}{g^{1 / 2}} \bigg) \Bigg),
\end{flalign}

\noindent uniformly in $g > 1$ and $m \in \mathbb{Y}_{2g - 2}$.

\begin{rem}
	
\label{errorpartitions}

Eskin and Zorich mention at the end of Section 2 of \cite{VSDCLG} that their data suggest that the error on the right side of \eqref{estimatenu1} should be smallest (over all $m \in \mathbb{Y}_{2g - 2}$) when $m = 1^{2g - 2}$ consists only of ones and largest when $m = (2g - 2)$.
	
\end{rem}

\begin{rem}
	
\label{rational}

It was observed as a curiosity in Remark 1 of \cite{VSDCLG} that the right side of \eqref{estimatenu1} is asymptotically a rational number, while for each $m$ the left side is a rational multiple of $\pi^{2g}$ (as mentioned above). Our method will see this as a consequence of the fact that the Riemann zeta function $\zeta (2g)$ is a rational multiple of $\pi^{2g}$ but converges to $1$ as $g$ tends to $\infty$.

\end{rem}

\begin{rem}
	
	 Theorem 2.12 of the recent work of Delecroix-Goujard-Zograf-Zorich \cite{SSCT} shows that \eqref{estimatenu1} implies (and is essentially implied by) asymptotics for the relative contribution of one-cylinder separatrix diagrams to the Masur-Veech volume of a stratum $\mathcal{H}_1 (m)$. This provides an alternative interpretation of \eqref{estimatenu1}.

\end{rem}

Before this work, the asymptotic \eqref{estimatenu1} had been verified in two cases. First, the work of Chen-M\"{o}ller-Zagier \cite{QLGL} established \eqref{estimatenu1} if $\mathcal{H} (m)$ is the \emph{principal stratum}, that is, when $m = 1^{2g - 2}$; this corresponds to the stratum in which all zeros of the holomorphic differential $\omega$ are distinct. By analyzing a generating function for the sequence $\big\{ \nu_1 \big( \mathcal{H}_1 (1^{2g - 2}) \big) \big\}_{g \ge 1}$, they show as Theorem 19.3 of \cite{QLGL} that
\begin{flalign}
\label{12g2estimate}
\nu_1 \big( \mathcal{H}_1 (1^{2g - 2}) \big) = 2^{4 - 2g} \Bigg( 1 - \displaystyle\frac{\pi^2}{24 g} + \mathcal{O} \bigg( \displaystyle\frac{1}{g^2} \bigg) \Bigg).
\end{flalign}

Second, the work of Sauvaget \cite{VSI} established \eqref{estimatenu1} in the case of the \emph{minimal stratum} $m = (2g - 2)$, when $\omega$ has one zero with multiplicity $2g - 2$. Through an analysis of Hodge integrals on the moduli space of curves (based on his earlier work \cite{CCSD}), he shows as Theorem 1.9 of \cite{VSI} that
\begin{flalign}
\label{2g2estimate}
\nu_1 \big( \mathcal{H}_1 (2g - 2) \big) = \displaystyle\frac{4}{2 g - 1} \Bigg( 1 + \mathcal{O} \bigg( \displaystyle\frac{1}{g} \bigg) \Bigg).
\end{flalign}

\subsection{Results}

\label{AsymptoticEstimate}

In this paper we establish the asymptotic \eqref{estimatenu1} for all strata, as indicated by the following theorem.

\begin{thm}
	
	\label{volumeestimateasymptotic}
	
	Let $g > 1$ be a positive integer, and let $m = (m_1, m_2, \ldots , m_n)$ denote a partition of size $2g - 2$. Then,
	\begin{flalign}
	\label{volumeestimate1h1}
	 \displaystyle\frac{4}{\prod_{i = 1}^n (m_i + 1)}  \left( 1 - \displaystyle\frac{2^{2^{200}}}{g}  \right) \le \nu_1 \big( \mathcal{H}_1 (m) \big) \le \displaystyle\frac{4}{\prod_{i = 1}^n (m_i + 1)}  \left( 1 +  \displaystyle\frac{2^{2^{200}}}{g}  \right).
	\end{flalign}
	
	\end{thm}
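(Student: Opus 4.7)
The plan is to start from the exact formula for $\nu_1\big(\mathcal{H}_1(m)\big)$ provided by the Eskin–Okounkov algorithm \cite{ANBCTV} and carry out a sufficiently careful combinatorial analysis to extract both the limiting constant $4/\prod_i(m_i+1)$ and a uniform $O(g^{-1})$ error. In the Eskin–Okounkov framework the volume is encoded (up to an explicit combinatorial prefactor that already contains a factor $\prod_i(m_i+1)^{-1}$) as a top-weight coefficient in the $q$-expansion of a $q$-bracket of the schematic form
\[
\Big\langle \prod_{i=1}^{n} \widehat{P}_{m_i+1} \Big\rangle_{q} \;=\; \frac{\sum_\lambda q^{|\lambda|} \prod_{i=1}^n \widehat{P}_{m_i+1}(\lambda)}{\sum_\lambda q^{|\lambda|}},
\]
where each $\widehat{P}_k(\lambda)$ is a shifted symmetric power-sum function of the modified Frobenius coordinates of $\lambda$. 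The numerator admits a lattice interpretation: writing $\widehat{P}_{m_i+1}(\lambda)$ as a sum over particles of the Maya diagram of $\lambda$ recasts it as an $n$-fold lattice sum against an area-weighted Plancherel-type measure on $\lambda$.

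The first step is to isolate the \emph{diagonal} contribution, in which all $n$ lattice variables land at distinct, well-separated positions; under the area-weighted measure these positions behave as approximately independent samples with a known one-point density, so the diagonal term factors asymptotically into a product of $n$ one-variable moments, each evaluable in closed form. Combined with the combinatorial prefactor, this should produce exactly the predicted value $4/\prod_i(m_i+1)$. In the course of this computation the factor of $\pi^{2g}$ from the volume normalization gets absorbed into a $\zeta(2g) = 1 + O(2^{-2g})$, matching the heuristic recorded in Remark \ref{rational}; the specializations $m = 1^{2g-2}$ and $m = (2g-2)$ should reproduce \eqref{12g2estimate} and \eqref{2g2estimate} as consistency checks.

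The main obstacle is bounding the \emph{off-diagonal} contributions, in which two or more of the $n$ lattice variables collide or cluster, uniformly in $m \in \mathbb{Y}_{2g-2}$. The difficulty is that different partitions $m$ live in very different regimes: $m = 1^{2g-2}$ has many small parts, so many collision patterns can combine, whereas $m = (2g-2)$ has a single enormous part, so the pointwise size of the lone factor $\widehat{P}_{m_i+1}(\lambda)$ is huge and must be controlled with great precision. The estimates must hold robustly throughout this interpolation, a picture consistent with Remark \ref{errorpartitions}. I would organize the off-diagonal terms by collision pattern via inclusion–exclusion and bound each pattern using (i) a sharp pointwise estimate on $\widehat{P}_k(\lambda)$ for typical $\lambda$ under the area-weighted measure, with constants depending at most geometrically on $k$, and (ii) a decoupling estimate showing that each additional pairwise collision costs a factor of at least $g^{-1}$ relative to the diagonal term. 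Producing these uniform estimates is the substantive combinatorial content of the argument.

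Finally, one reassembles the pieces: divide by the denominator $\sum_\lambda q^{|\lambda|}$ of the $q$-bracket, apply the Eskin–Okounkov normalization to pass from the top $q$-coefficient back to the Masur–Veech volume, and combine the diagonal main term with the off-diagonal error to obtain \eqref{volumeestimate1h1}. Since the constant $2^{2^{200}}$ in the theorem is astronomical, no effort beyond polynomial tracking of the pointwise bounds in $k$ and in the typical size of $\lambda$ is needed during the uniform estimates of the previous step.
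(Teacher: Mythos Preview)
Your proposal is not a proof but a strategy sketch, and it diverges substantially from the paper's actual argument. The paper does \emph{not} work at the level of the $q$-bracket, Maya diagrams, or lattice sums over partitions $\lambda$. Instead it takes as input the fully processed algebraic output of the Eskin--Okounkov algorithm: Proposition~\ref{cmuinner} expresses $\mathbf{c}(m)$ as $|m|!^{-1}\langle \mathfrak{f}_{m_1}\,|\,\cdots\,|\,\mathfrak{f}_{m_n}\rangle$, where the multilinear form $\langle\,\cdot\,|\,\cdots\,|\,\cdot\,\rangle$ is built from Definitions~\ref{innerp}, \ref{innermanyp}, and \ref{fdefinition}. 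The paper then reduces to the equivalent Theorem~\ref{volume} and proves it by showing that, in each of the three nested sums defining this form, a single explicit term dominates: $\langle \mathcal{F}_{m_1}\,|\,\cdots\,|\,\mathcal{F}_{m_n}\rangle \approx \langle p_{m_1}\,|\,\cdots\,|\,p_{m_n}\rangle \approx |m|!\,\mathfrak{z}(|m|-n+2)\approx 2|m|!$. The error terms are controlled by purely combinatorial factorial estimates (Lemma~\ref{aicisum}, Lemma~\ref{hestimate}, Lemma~\ref{mestimatelarge}, Propositions~\ref{mestimate}, \ref{innermanyestimate}, \ref{innermanyestimate2}) over sums indexed by set partitions and compositions, not by any probabilistic independence heuristic on particles of a Maya diagram.

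Your plan has a genuine gap at its technical core. The two ingredients you need---(i) a sharp pointwise bound on $\widehat{P}_k(\lambda)$ for ``typical'' $\lambda$ under the area-weighted measure, and (ii) a decoupling estimate showing each collision costs a factor $g^{-1}$---are precisely the hard content, and you offer no mechanism to obtain either. The independence heuristic for well-separated lattice points is not a theorem, and making it one uniformly over \emph{all} $m\in\mathbb{Y}_{2g-2}$ (from $m=1^{2g-2}$ with $n=2g-2$ factors to $m=(2g-2)$ with a single enormous $\widehat{P}_{2g-1}$) would likely force you to rederive the very algebraic identities the paper starts from. Moreover, extracting the top-weight $q$-coefficient from the $q$-bracket is itself nontrivial; the Eskin--Okounkov machinery (the Wick-type identity of Definition~\ref{innermanyp}, the expansion~\eqref{kf} of $\mathfrak{f}_k$) exists exactly to perform this extraction, and bypassing it is not obviously easier. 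In short, the paper's route is to analyze explicit finite sums with factorial weights; your route proposes to analyze an infinite probabilistic model and has not supplied the estimates that would make that analysis go through.
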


\begin{rem}
	\label{error121}
	
	Observe that the error in \eqref{volumeestimate1h1} (which is of order $\frac{1}{g}$) is in fact smaller than what was predicted by \eqref{estimatenu1}. However, this is consistent with Remark \ref{errorpartitions} and \eqref{2g2estimate}. Indeed, the former states that the error should be largest when $m = (2g - 2)$, and the latter states that if $m = (2g - 2)$ then the error is of order $\frac{1}{g}$. Thus, one should expect the error to be of order $\frac{1}{g}$ for all $m$, as in \eqref{volumeestimate1h1}.
\end{rem}

The proof of Theorem \ref{volumeestimateasymptotic} (or in fact the equivalent Theorem \ref{volume} below) will appear in Section \ref{EstimateM} and Section \ref{VolumeProof}; we will very briefly discuss this proof (see Section \ref{VolumeEstimateInitial} for a slightly more detailed description) and describe the organization for the remainder of this paper in Section \ref{Outline}. However, before doing so, let us make a few additional comments about the conjectures in \cite{VSDCLG}.

Eskin-Zorich made a number of asymptotic predictions in addition to \eqref{estimatenu1}. In particular, they also have conjectures on the large genus asymptotics for the \emph{area Siegel-Veech constants} of the strata $\mathcal{H} (m)$. Although we will not carefully define it here, the area Siegel-Veech constant is a different numerical invariant of a stratum $\mathcal{H} (m)$ of Abelian differentials, and it can be directly equated with several quantities of geometric interest, such as asymptotics for the number of closed geodesics on a translation surface \cite{AFS} and the sum of the positive Lyapunov exponents of the Hodge bundle under the Teichm\"{u}ller geodesic flow \cite{ERG}. The previously mentioned results of Chen-M\"{o}ller-Zagier \cite{QLGL} and Sauvaget \cite{VSI} confirm the predictions of \cite{VSDCLG} on the asymptotics for these constants (in addition to \eqref{estimatenu1}) for the principal stratum and the minimal stratum, respectively.

We have not attempted to see whether our methods can be applied to establish these predictions on the area Siegel-Veech constants in full generality, but let us recall that the work of Eskin-Masur-Zorich \cite{PBC} provides identities that express Siegel-Veech constants of a given stratum in terms of the Masur-Veech volumes of (often different) strata. By combining these results with Theorem \ref{volumeestimateasymptotic},  the \hyperref[Constants]{Appendix} by Anton Zorich evaluates the large genus asymptotics for Siegel-Veech constants counting various types of saddle connections. It might be possible to also use Theorem \ref{volumeestimateasymptotic} to determine the large genus asymptotics for area Siegel-Veech constants of some families of strata, but we will not pursue this here.

\begin{rem}
	
Subsequent to the appearance of this paper, we in \cite{LGAC} established the Eskin-Zorich prediction on area Siegel-Veech constants for connected strata of Abelian differentials. After this, Chen-M\"{o}ller-Sauvaget-Zagier \cite{VSCC} proposed an independent and very different algebro-geometric proof of both the volume asymptotic \eqref{volumeestimate1h1} and area Siegel-Veech constant asymptotic predicted in \cite{VSDCLG}. Later, using both combinatorial ideas from the present work and algebro-geometric methods from \cite{VSCC}, Sauvaget in \cite{LGAEV} proved an all-order genus expansion for the Masur-Veech volume of an arbitrary stratum. In \cite{TRV,VFGIN}, several predictions were proposed for asymptotics for Masur-Veech polynomials and volumes associated with strata of quadratic differentials under various limiting regimes. 
\end{rem}

\subsection{Outline}

\label{Outline}

The proof of Theorem \ref{volumeestimateasymptotic} is based on a combinatorial analysis of the original algorithm proposed by Eskin and Okounkov for evaluating $\nu_1 \big(\mathcal{H}_1 (m) \big)$ in \cite{ANBCTV}. However, as mentioned previously, this algorithm is intricate; it expresses the Masur-Veech volume through the composition of three identities, each of which involves a sum whose number of terms increases exponentially in the genus $g$. What we will show is that each of these sums is dominated by a single term, and the remaining (non-dominant) terms in the sum decay rapidly and can be viewed as negligible. However, instead of explaining this method in full generality immediately, it might be useful to see it implemented in a special case.

Therefore, after recalling some notation and combinatorial estimates in Section \ref{Estimates}, we will in Section \ref{StratumPrincipal} consider the case of the principal stratum, $m = 1^{2g - 2}$. In this setting, Eskin-Okounkov provide an explicit identity (see Lemma \ref{identityprincipal} below) for the volume $\nu_1 \big( \mathcal{H}_1 (m) \big)$. This identity will retain the complication of involving a large sum, but it will allow us to avoid having to implement the three-fold composition mentioned above. Thus, we will use Lemma \ref{identityprincipal} to obtain a quick proof of \eqref{12g2estimate} and, in so doing, hopefully provide some indication as to how one can estimate the types of large sums that will appear later in this paper.

Next, we will recall the Eskin-Okounkov algorithm in Section \ref{VolumeEvaluate} and explain how it can be used to provide a heuristic for Theorem \ref{volumeestimateasymptotic} in Section \ref{VolumeEstimateInitial}. The remaining Section \ref{EstimateM} and Section \ref{VolumeProof} will then be directed towards establishing the estimates required for the proof of Theorem \ref{volumeestimateasymptotic} (or rather its equivalent version Theorem \ref{volume}).

The \hyperref[Constants]{Appendix} by Anton Zorich then applies Theorem \ref{volumeestimateasymptotic} to evaluate the large genus asymptotics for certain classes of Siegel-Veech constants.

\subsection*{Acknowledgments}

The author heartily thanks Anton Zorich for many stimulating conversations, valuable encouragements, and enlightening explanations, and also for kindly offering to attach the \hyperref[Constants]{Appendix} to this paper. The author also would like to express his profound gratitude to Alexei Borodin, Dawei Chen, Eduard Duryev, Adrien Sauvaget, and Peter Smillie for helpful comments and discussions. Additionally, the author thanks the anonymous referee for detailed suggestions on the initial draft of this paper. This work was partially supported by the NSF Graduate Research Fellowship under grant numbers DGE1144152 and DMS-1664619.

\section{Miscellaneous Preliminaries}

\label{Estimates}

In this section we recall some notation and estimates that will be used throughout this paper. In particular, Section \ref{Partitions} will set some notation on partitions and set partitions, and Section \ref{Estimates1} will collect several estimates to be applied later.

\subsection{Notation}

\label{Partitions}

A \emph{partition} $\lambda = (\lambda_1, \lambda_2, \ldots , \lambda_k)$ is a finite, nondecreasing sequence of positive integers. The numbers $\lambda_1, \lambda_2, \ldots , \lambda_k$ are called the \emph{parts} of $\lambda$; the number of parts $\ell (\lambda) = k$ is called the \emph{length} of $\lambda$; and the sum of the parts $|\lambda| = \sum_{i = 1}^k \lambda_i$ is called the \emph{size} of $\lambda$. We will also require the (slightly nonstandard) notion of the weight of the partition, which is defined as follows.

\begin{definition}[{\cite[Definition 4.26]{ANBCTV}}]
	
	\label{partitionweight}
	
The \emph{weight} of $\lambda$ is defined to be $\wt (\lambda) = |\lambda| + \ell (\lambda)$.

\end{definition}

For each integer $n \ge 0$, let $\mathbb{Y}_n$ denote the set of partitions of size $n$, and let $\mathbb{Y}_n (k)$ denote the number of partitions of size $n$ and length $k$. Further let $\mathbb{Y} = \bigcup_{n \ge 0} \mathbb{Y}_n$ denote the set of all partitions. For each $i \ge 1$ and any $\lambda \in \mathbb{Y}$, let $M_i (\lambda)$ denote the multiplicity of $i$ in $\lambda$; stated alternatively, $M_i (\lambda)$ denotes the number of indices $j \in \big[1, \ell (\lambda) \big]$ such that $\lambda_j = i$.

Observe in particular that $\sum_{i = 1}^{\infty} M_i (\lambda) = \ell (\lambda)$. Furthermore, for any positive integers $n$ and $k$, we have that
\begin{flalign}
\label{mlambda}
\displaystyle\sum_{\lambda \in \mathbb{Y}_n (k)} \displaystyle\frac{k!}{\prod_{j = 1}^{\infty} M_j (\lambda)!} = \binom{n - 1}{k - 1},
\end{flalign}

\noindent since both sides of \eqref{mlambda} count the number of \emph{compositions} of $n$ of length $k$, that is, the number of (ordered) $k$-tuples $(j_1, j_2, \ldots , j_k)$ of positive integers that sum to $n$. We denote the set of compositions $j = (j_1, j_2, \ldots , j_k)$ of $k$-tuples of positive integers summing to $n$ by $\mathcal{C}_n (k)$. Also denote the set of \emph{nonnegative compositions} of some integer $n \ge 0$, that is, the set of (ordered) $k$-tuples $(j_1, j_2, \ldots , j_k)$ of nonnegative integers that sum to $n$, by $\mathcal{G}_n (k)$. Observe that
\begin{flalign}
\label{ynkestimate}
\big| \mathbb{Y}_n (k) \big| \le \big| \mathcal{C}_n (k) \big| = \binom{n - 1}{k - 1}; \qquad \big| \mathcal{G}_n (k) \big| = \binom{n + k - 1}{k - 1}.
\end{flalign}

In addition to discussing partitions, we will also consider set partitions. For any finite set $S$, a \emph{set partition} $\alpha = \big( \alpha^{(1)}, \alpha^{(2)}, \ldots , \alpha^{(k)} \big)$ of $S$ is a sequence of disjoint subsets $\alpha^{(i)} \subseteq S$ such that $\bigcup_{i = 1}^k \alpha^{(i)} = S$; these subsets $\alpha^{(i)}$ are called the \emph{components} of $\alpha$. The \emph{length} $\ell (\alpha) = k$ of $\alpha$ denotes the number of components of $\alpha$.

Depending on the context, we may wish to (or not to) distinguish two set partitions consisting of the same components but in a different order. To that end, we have the definition below; in what follows, $\mathfrak{S} (k)$ denotes the symmetric group on $k$ elements.

\begin{definition}
	
\label{reducedpartitions}

Two set partitions $\alpha_1 = \big( \alpha_1^{(1)}, \alpha_1^{(2)}, \ldots , \alpha_1^{(k_1)} \big)$ and $\alpha_2 = \big( \alpha_2^{(1)}, \alpha_2^{(2)}, \ldots , \alpha_2^{(k_2)} \big)$ are \emph{equivalent as reduced set partitions} if $k_1 = k_2$ and there exists a permutation $\sigma \in \mathfrak{S} (k_1)$ such that $\alpha_1^{(i)} = \alpha_2^{\sigma(i)}$ for each $1 \le i \le k_1$. However, we will consider them inequivalent as \emph{nonreduced set partitions} unless $\sigma = \Id$. For instance, if $S = \{1, 2, 3, 4 \}$, then the set partitions $\big( \{1, 2 \}, \{ 3, 4 \} \big)$ and $\big( \{3, 4\}, \{1, 2 \} \big)$ are equivalent as reduced set partitions but not as nonreduced ones.

For any positive integers $n$ and $k$, let $\mathcal{P}_n$ denote the family of (equivalence classes of) reduced set partitions of $\{1, 2, \ldots , n \}$ and $\mathcal{P}_{n; k}$ denote the family of (equivalence classes of) reduced set partitions of $\{ 1, 2, \ldots , n \}$ of length $k$. Similarly, let $\mathfrak{P}_n$ denote the family of nonreduced set partitions of $\{1, 2, \ldots , n \}$ and $\mathfrak{P}_{n; k}$ denote the family of nonreduced set partitions of $\{ 1, 2, \ldots , n \}$ of length $k$.

Furthermore, for any set of positive integers $A = (A_1, A_2, \ldots, A_k)$ with $\sum_{i = 1}^k A_i = n$, let $\mathfrak{P} (A) = \mathfrak{P} (A_1, A_2, \ldots , A_k) = \mathfrak{P}_{n; k} (A_1, A_2, \ldots , A_k)$ denote the family of nonreduced set partitions $\alpha = \big(\alpha^{(1)}, \alpha^{(2)}, \cdots , \alpha^{(k)} \big)$ of $\{1, 2, \ldots , n \}$ such that $ \alpha^{(i)}$ has $A_i$ elements for each $1 \le i \le k$.
\end{definition}

Observe in particular that
\begin{flalign}
\label{aipksize}
k! \big| \mathcal{P}_{n, k} \big| = \big| \mathfrak{P}_{n, k} \big|; \qquad \big| \mathfrak{P} (A) \big| = \binom{n}{A_1, A_2, \ldots , A_k}; \qquad \mathfrak{P}_{n, k} = \bigcup_{A \in \mathcal{C}_n (k)} \mathfrak{P} (A).
\end{flalign}

We say that a reduced set partition  $\alpha_1 \in \mathcal{P}_n$ \emph{refines} $\alpha_2 \in \mathcal{P}_n$ if, for each $\alpha_1^{(i)} \in \alpha_1$, there exists some $\alpha_2^{(j)} \in \alpha_2$ such that $\alpha_1^{(i)} \subseteq \alpha_2^{(j)}$. Then there exists a partial order on $\mathcal{P}_n$ (and thus one on $\mathfrak{P}_n$) defined by stipulating that $\alpha_1 \le \alpha_2$ if $\alpha_1$ refines $\alpha_2$. This allows one to define the notion of complementary partitions, given as follows.

\begin{definition}[{\cite[Definition 6.2]{ANBCTV}}]

\label{partitionscomplement}	
	
	Two reduced set partitions $\alpha_1, \alpha_2 \in \mathcal{P}_n$ are \emph{complementary} if $\ell (\alpha_1) + \ell (\alpha_2) = n + 1$ and the minimal element of $\mathcal{P}_n$ greater than or equal to both $\alpha_1$ and $\alpha_2$ is the maximal set partition $( \{ 1, 2, \ldots , n \} )$. For any $\gamma \in \mathcal{P}_n$, let $\mathscr{C} (\gamma)$ denote the set of reduced set partitions $\alpha \in \mathcal{P}_n$ that are complementary to $\gamma$.
\end{definition}

For instance if $n = 5$, then $\big( \{ 1 \}, \{2 \}, \{ 3, 4, 5 \} \big)$ and $\big( \{ 1, 3 \}, \{ 2, 4 \}, \{ 5 \} \big)$ are complementary. However, $\big( \{ 1, 2 \}, \{ 3 \}, \{ 4, 5 \} \big)$ and $\big( \{ 1, 2, 3 \}, \{ 4 \}, \{ 5 \} \big)$ are not since they both refine $\big( \{ 1, 2, 3 \}, \{ 4, 5 \} \big)$.

The following lemma indicates that two complementary set partitions $\alpha_1$ and $\alpha_2$ are \textit{transverse}, in that any component of $\alpha_1$ can intersect any component of $\alpha_2$ at most once.

\begin{lem}
	
	\label{transverse}
	
	If $\alpha_1 \in \mathcal{P}_n$ and $\alpha_2 \in \mathscr{C} (\alpha_1)$, then $\big| \alpha_1^{(i)} \cap \alpha_2^{(j)} \big| \le 1$ for each $i, j$.
\end{lem}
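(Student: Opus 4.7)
The plan is to reinterpret the complementary condition as a statement about a bipartite graph and then invoke the elementary inequality $|V|-1\le|E|$ for connected graphs.

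Specifically, I will construct an auxiliary bipartite multigraph $G$ whose vertex set is the disjoint union of the components of $\alpha_1$ and those of $\alpha_2$ (so $|V(G)|=\ell(\alpha_1)+\ell(\alpha_2)$), and whose edges are the elements of $\{1,2,\ldots,n\}$: each $t\in\{1,\ldots,n\}$ contributes a single edge joining the unique $\alpha_1^{(i)}\ni t$ to the unique $\alpha_2^{(j)}\ni t$. Then $|E(G)|=n$, and by construction the number of edges between $\alpha_1^{(i)}$ and $\alpha_2^{(j)}$ equals $\bigl|\alpha_1^{(i)}\cap\alpha_2^{(j)}\bigr|$. So the lemma is equivalent to saying $G$ has no multi-edges.

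Next I would observe that the connected components of $G$ correspond exactly to the blocks of the join $\alpha_1\vee\alpha_2$ (the minimal set partition that is $\ge \alpha_1$ and $\ge \alpha_2$): two elements $s,t\in\{1,\ldots,n\}$ lie in a common block of $\alpha_1\vee\alpha_2$ if and only if there is a finite chain of elements connecting them where consecutive members share a block of $\alpha_1$ or of $\alpha_2$, which is exactly the condition that their endpoint-vertices of $G$ lie in the same connected component. Since $\alpha_2\in\mathscr{C}(\alpha_1)$, the join is the maximal partition $\bigl(\{1,2,\ldots,n\}\bigr)$, so $G$ is connected.

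Any connected graph satisfies $|E|\ge|V|-1$, with equality if and only if it is a tree. In our case the complementary condition gives $|V(G)|=\ell(\alpha_1)+\ell(\alpha_2)=n+1=|E(G)|+1$, so $G$ is a tree. In particular, $G$ has no parallel edges, which by the translation in the first paragraph gives $\bigl|\alpha_1^{(i)}\cap\alpha_2^{(j)}\bigr|\le 1$ for every $i,j$, as desired. The only real content of the argument is the dictionary between the join of two set partitions and connectivity of this bipartite graph; once that is in place, the bound is a one-line consequence of the tree inequality, so I do not expect any serious obstacle.
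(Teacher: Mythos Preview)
Your proof is correct. The bipartite-graph reformulation is the natural setting for this lemma: once you observe that the join $\alpha_1\vee\alpha_2$ being the maximal partition is exactly connectedness of $G$, the complementary numerology $|V|=n+1$, $|E|=n$ forces $G$ to be a tree, and trees have no parallel edges.

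The paper's proof reaches the same contradiction but without naming the graph. It assumes some $\big|\alpha_1^{(i)}\cap\alpha_2^{(j)}\big|\ge 2$, then greedily absorbs the $\alpha_2$-components one by one, tracking at each step the set $\mathcal{R}_m$ of $\alpha_1$-components touched so far; the recursive bound $|\mathcal{R}_m|\le|\mathcal{R}_{m-1}|+\big|\alpha_2^{(k_m)}\big|-1$ is exactly the statement that each newly added vertex brings in at most (degree${}-1$) new neighbors once one edge is spent reconnecting to the existing piece. Summing gives $\ell(\alpha_1)\le n-\ell(\alpha_2)$, contradicting $\ell(\alpha_1)+\ell(\alpha_2)=n+1$. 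So the paper is in effect redoing the spanning-tree inequality by hand. Your version is shorter and more transparent because it invokes the tree characterization directly; the paper's version avoids any graph-theoretic vocabulary at the cost of a slightly longer inductive bookkeeping argument. The underlying content is identical.
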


\begin{proof}

 Denote $\alpha_1 = \big( \alpha_1^{(1)}, \alpha_1^{(2)}, \ldots , \alpha_1^{(r)} \big)$ and $\alpha_2 = \big( \alpha_2^{(1)}, \alpha_2^{(2)}, \ldots , \alpha_2^{(s)} \big)$, and assume to the contrary that there exist $i \in [1, r]$ and $j \in [1, s]$ such that $\big| \alpha_1^{(i)} \cap \alpha_2^{(j)} \big| \ge 2$. For notational convenience, let us set $i = j = 1$.

 We will define distinct indices $k_1, k_2, \ldots , k_s \in [1, s]$ inductively as follows. First, set $k_1 = 1$. Now, suppose we have selected $k_1, k_2, \ldots , k_{m - 1}$ for some integer $m \in [2, s]$. Let $\mathcal{R}_{m - 1}$ denote the set of indices $i \in [1, r]$ such that $\alpha_1^{(i)}$ is not disjoint with $\bigcup_{j = 1}^{m - 1} \alpha_2^{(k_j)}$; observe that $1 \in \mathcal{R}_1$.

 Since $( \{ 1, 2, \ldots , n \} )$ is the minimal reduced set partition that is refined by both $\alpha_1$ and $\alpha_2$, it follows that $\bigcup_{j = 1}^{m - 1} \alpha_2^{(k_j)} \subset \bigcup_{i \in \mathcal{R}_{m - 1}} \alpha_1^{(i)}$ (and $\bigcup_{i \in \mathcal{R}_{m - 1}} \alpha_1^{(i)} \ne \bigcup_{j = 1}^{m - 1} \alpha_2^{(k_j)}$). Therefore, there exists some index $k \in [1, s]$ distinct from $k_1, k_2, \ldots , k_{m - 1}$ such that $\alpha_2^{(k)}$ is not disjoint with $\bigcup_{i \in \mathcal{R}_{m - 1}} \alpha_1^{(i)}$; set $k_m$ equal to any such $k$.

 Now observe that $\big| \mathcal{R}_1 \big| \le \ell \big( \alpha_2^{(1)} \big) - 1$, since $\big| \alpha_1^{(1)} \big| \cap \big| \alpha_2^{(1)} \big| \ge 2$ and there can be at most $\ell \big( \alpha_2^{(1)} \big) - 2$ indices $i \ne 1$ such that $\alpha_1^{(i)}$ intersects $\alpha_2^{(1)}$. We also have that $\big| \mathcal{R}_m \big| \le \big| \mathcal{R}_{m - 1} \big| + \ell \big( \alpha_2^{(k_m)} \big) - 1$ for each $m \ge 2$, since $\big| \alpha_2^{(k_m)} \cap \bigcup_{i \in \mathcal{R}_{m - 1}} \alpha_1^{(i)} \big| \ge 1$. Together these estimates yield $|\mathcal{R}_s| = \ell (\alpha_1) \le \sum_{i = 1}^s \big( \ell \big( \alpha_2^{(i)} \big) - 1 \big) = n - \ell (\alpha_2)$, which contradicts the fact that $\ell (\alpha_1) = n + 1 - \ell (\alpha_2)$ as $\alpha_1$ and $\alpha_2$ are complementary.
\end{proof}

\subsection{Estimates}

\label{Estimates1}

In this section we collect several estimates that will be used at various points throughout this paper. In the below, for any integer $k > 1$, we denote by $\zeta (k) = \sum_{j = 1}^{\infty} j^{-k}$ the Riemann zeta function. Moreover, if $c$ is some constant and $k$ is some integer variable, then we write $ck!$ to denote $c \cdot k!$ (instead of $(ck)!)$. For instance, $2k! = 2 \cdot k! \ne (2k)!$ and $2 (k - 4)! = 2 \cdot (k - 4)! \ne \big(2 (k - 4) \big)! = (2k - 8)!$.

We will repeatedly use the bounds
\begin{flalign}
\label{2ll}
\begin{aligned}
k \le & 2^{k - 1};  \qquad \displaystyle\frac{2 k^{k + 1 / 2}}{e^k}  \le k! \le \displaystyle\frac{4 k^{k + 1 / 2}}{e^k}; \qquad \binom{n}{k} \le 2^n; \\
& \displaystyle\sum_{i = 1}^{k - 1} i! (k - i)! \le 4(k - 1)!; \qquad \big| \zeta (k) - 1 \big| \le \displaystyle\frac{4}{2^k},
\end{aligned}
\end{flalign}

\noindent which hold for any nonnegative integer $k$ (and for the last estimate in \eqref{2ll} we additionally assume that $k > 1$). The first three estimates in \eqref{2ll} are quickly verified; let us explain how to derive the fourth and fifth. The fourth follows from the fact that
\begin{flalign*}
\displaystyle\sum_{i = 1}^{k - 1} i! (k - i)! = 2 (k - 1)! + (k - 1)! \displaystyle\sum_{i = 2}^{k - 2} k \binom{k}{i}^{-1} \le (k - 1)! \Bigg( 2 + k (k - 3) \binom{k}{2}^{-1} \Bigg) \le 4 (k - 1)!,
\end{flalign*}

\noindent where in the second statement above we used the fact that $\min_{2 \le i \le k - 2} \binom{k}{i} = \binom{k}{2}$. To deduce the fifth bound in \eqref{2ll}, observe for $k \in \mathbb{Z}_{> 1}$ that
\begin{flalign*}
\big| \zeta (k) - 1 \big| = 2^{-k} \Bigg( 1 + \displaystyle\sum_{j = 3}^{\infty} \bigg( \displaystyle\frac{2}{j} \bigg)^k  \Bigg) \le 2^{-k} \Bigg( 1 + \sum_{j = 3}^{\infty} \bigg( \displaystyle\frac{2}{j} \bigg)^2 \Bigg) = 2^{-k} \big( 4 \zeta (2) - 4 \big) < \displaystyle\frac{4}{2^k}.
\end{flalign*}

Let us state a further (known) bound that we will also often use throughout this paper. If $n, r$ are positive integers and $\{ A_i \}$ and $\{ A_{i, j} \}$ for $1 \le i \le n$ and $1 \le j \le r$ are nonnegative integers such that $\sum_{j = 1}^r A_{i, j} = A_i$ for each $i$, then one can quickly verify the multinomial coefficient estimate
\begin{flalign}
\label{aijai}
\displaystyle\prod_{i = 1}^n \binom{A_i}{A_{i, 1}, A_{i, 2}, \ldots , A_{i, r}} \le \binom{\sum_{j = 1}^n A_i}{\sum_{i = 1}^n A_{i, 1}, \sum_{i = 1}^n A_{i, 2}, \ldots , \sum_{i = 1}^n A_{i, r}}.
\end{flalign}

Now we have the following lemma bounds products of factorials and will be used several times throughout the proof of Theorem \ref{volume}.

\begin{lem}
	
	\label{aicisum}
	
	Let $k \ge 1$ and $C_1, C_2, \ldots , C_k$ be nonnegative integers with $C_1 = \max_{1 \le i \le k} C_i$. Fix some integer $N$, and let $A_1, A_2, \ldots , A_k$ be nonnegative integers such that $\sum_{i = 1}^k A_i = N$. Then,
	\begin{flalign}
	\label{aici1}
	\displaystyle\prod_{i = 1}^k (A_i + C_i)! \le (N + C_1)! \displaystyle\prod_{i = 2}^k C_i!.
	\end{flalign}

	\noindent Moreover, if we stipulate that that $A_i$ are all nonnegative integers; that at least two of the $A_i$ are positive; that $C_1 = \max_{1 \le i \le k} C_i$; and that $C_2 = \max_{2 \le i \le k} C_i$, then we have that
	\begin{flalign}
	\label{2aici}
	\begin{aligned}
	& \displaystyle\prod_{i = 1}^k (A_i + C_i)! \le (N + C_1 - 1)! (C_2 + 1)! \displaystyle\prod_{i = 3}^k C_i!.
	\end{aligned}
	\end{flalign}

	\noindent Furthermore, if we impose $k \ge 2$ and that the $A_i$ are all even positive integers (meaning that $N$ is even) with at least two of them greater than or equal to four, then
	\begin{flalign}
	\label{aiproduct1}
	\displaystyle\prod_{i = 1}^k (2 A_i - 3)!! \le 15 (2N - 4k - 3)!!.
	\end{flalign}

\end{lem}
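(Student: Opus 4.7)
The plan is to use a single \emph{merge inequality} as the common tool. Namely, for nonnegative integers $a, b, c, d$ with $c \geq d$, one has $(a+c)!(b+d)! \leq (a+b+c)!\, d!$. This follows by rewriting it as $(a+c+1)(a+c+2)\cdots(a+b+c) \geq (d+1)(d+2)\cdots(b+d)$ and comparing term by term, since $a + c + 1 \geq d + 1$. Informally, the lemma says that lumping extra units into a slot with a larger ``base'' is at least as favorable as distributing them among slots with smaller bases.

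For \eqref{aici1}, I would iteratively apply the merge inequality to transfer $A_2, A_3, \ldots, A_k$ into slot $1$, using $C_1 \geq C_i$ at each step; after $k - 1$ merges the left-hand side is at most $(N + C_1)! \prod_{i=2}^k C_i!$. For \eqref{2aici}, the hypothesis that at least two $A_i$ are positive lets me pick an index $j \geq 2$ with $A_j \geq 1$. If $j = 2$, I first merge $A_3, \ldots, A_k$ into slot $1$ and then move $A_2 - 1$ units from slot $2$ into slot $1$. If $j \geq 3$, I merge all $A_i$ with $i \neq 1, 2, j$ into slot $1$, then move $A_j - 1$ units from slot $j$ into slot $1$, arriving at $(N-1+C_1)!\, C_2!\, (C_j+1)! \prod_{i \neq 1,2,j} C_i!$. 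The remaining comparison with the target requires $C_2!\,(C_j+1)! \leq (C_2+1)!\, C_j!$, which reduces to $C_j \leq C_2$ and holds since $C_2 = \max_{i \geq 2} C_i$.

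The double-factorial bound \eqref{aiproduct1} is the most substantive, and I would prove it by induction on $k$. For the base case $k = 2$, with $A_1 \leq A_2$ both even and at least $4$, dividing the desired inequality $(2A_1-3)!!\,(2A_2-3)!! \leq 15\,(2A_1 + 2A_2 - 11)!!$ through by $15\,(2A_2-3)!!$ reduces it to a comparison between two products of $A_1 - 4$ consecutive odd numbers; pairing the $j$th factors shows that each factor on the right exceeds the corresponding factor on the left by $2A_2 - 8 \geq 0$. For the inductive step with $k \geq 3$, let $A_{\min}$ denote the smallest part. If $A_{\min} = 2$, then $(2A_{\min} - 3)!! = 1$ and the bound reduces to the $(k-1)$-term case with $N' = N - 2$, for which $2N' - 4(k-1) - 3 = 2N - 4k - 3$; the hypothesis of at least two parts $\geq 4$ is preserved. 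If instead $A_{\min} \geq 4$, then every $A_i \geq 4$, and an auxiliary merge inequality $(2A-3)!!(2B-3)!! \leq (2(A+B-2)-3)!!$ for $A, B \geq 4$ (proved by the same pairing argument used in the base case) fuses two parts into a single part of value $A+B-2$, reducing to the case in which some part equals $2$.

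The main place I expect to need care is the double-factorial step in \eqref{aiproduct1}: one must verify the auxiliary merge inequality and check that the reduction preserves the constraint of at least two parts $\geq 4$ at each stage. The extremal configuration with two parts equal to $4$ and all other parts equal to $2$ produces equality in \eqref{aiproduct1}, confirming that the constant $15 = 5!!$ is sharp and that no slack can be lost in the bookkeeping.
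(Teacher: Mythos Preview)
Your approach is correct and essentially matches the paper's: the merge inequality you isolate is precisely the inductive step the paper uses for \eqref{aici1} (the paper omits the proof of \eqref{2aici}, and your argument for it is fine once you stipulate that you choose $j = 2$ whenever $A_2 \ge 1$, so that $A_2 = 0$ in the case $j \ge 3$). For \eqref{aiproduct1} your base case and the $A_{\min} = 2$ subcase coincide with the paper's argument; your $A_{\min} \ge 4$ subcase via the auxiliary fusion $(2A-3)!!(2B-3)!! \le (2(A+B-2)-3)!!$ is a valid variant (the paper instead peels off $A_{\min}$ uniformly and merges afterward), but your concluding phrase should read ``apply the induction hypothesis to the resulting $k-1$ parts summing to $N-2$'' (which gives $15(2N-4k-3)!!$ directly), not ``reducing to the case in which some part equals $2$'', since the fused part is at least $6$.
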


\begin{rem}
	
	Equality in each of the bounds \eqref{aici1}, \eqref{2aici}, and \eqref{aiproduct1} can be achieved when one of the terms in the product on the left side is as large as possible. Specifically, equality in \eqref{aici1} is obtained when $A_1 = N$ and $A_i = 0$ for each $i > 1$; equality in \eqref{2aici} is obtained when $A_1 = N - 1$, $A_2 = 1$, and $A_i = 0$ for each $i > 2$; and equality in \eqref{aiproduct1} is obtained when $A_1 = N - 2k$, $A_2 = 4$, and $A_i = 2$ for each $i > 2$.
\end{rem}

\begin{proof}[Proof of Lemma \ref{aicisum}]

The proofs of \eqref{aici1} and \eqref{2aici} are very similar, so let us omit the proof of \eqref{2aici}. To establish \eqref{aici1}, we induct on $k$, observing that the statement holds if $k = 1$. Thus, let $m \ge 2$ be a positive integer and suppose that the statement is valid whenever $k \le m - 1$.

Let $C_1, C_2, \ldots , C_m$ be nonnegative integers with $C_1 = \max_{1 \le i \le m} C_i$ and let $A_1, A_2, \ldots , A_m$ be nonnegative integers such that $\sum_{i = 1}^m A_i = N$. Then,
\begin{flalign}
\label{1aici1estimate2}
\displaystyle\prod_{i = 1}^m (A_i + C_i)! = (A_1 + C_1)! \displaystyle\prod_{i = 2}^n (A_i + C_i)! \le (A_1 + C_1)! ( N + C_2 - A_1)! \displaystyle\prod_{i = 3}^m C_i!.
\end{flalign}

\noindent Since $C_1 = \max_{1 \le i \le m} C_i$ and $A_1 \le N$, we have that
\begin{flalign*}
\displaystyle\frac{(A_1 + C_1)!}{C_2!} = \displaystyle\prod_{j = 1}^{A_1 + C_1 - C_2} (j + C_2) & \le \displaystyle\prod_{j = 1}^{A_1 + C_1 - C_2} (j + N + C_2 - A_1)  = \displaystyle\frac{(N + C_1)!}{(N + C_2 - A_1)!},
\end{flalign*}

\noindent and so we deduce \eqref{aici1} from \eqref{1aici1estimate2}.

To establish \eqref{aiproduct1}, we again induct on $k$. To verify the statement for $k = 2$, let $A = A_1 \ge A_2 = B \ge 4$, and observe that \eqref{aiproduct1} holds if $B = 4$. If instead $B \ge 5$, then
\begin{flalign*}
(2 A - 3)!! (2 B - 3)!! = \displaystyle\prod_{i = 1}^{A - 1} (2i - 1) \displaystyle\prod_{i = 1}^{B - 1} (2i - 1) & < \displaystyle\prod_{i = 1}^{A - 1} (2i - 1) \displaystyle\prod_{i = 1}^{B - 1} (2i - 1) \displaystyle\prod_{i = 4}^{B - 1} \displaystyle\frac{2 (i + A - 4) - 1}{2i - 1} \\
& = 15 \displaystyle\prod_{i = 1}^{A + B - 5} (2i - 1) = 15 (2 A + 2 B - 11)!!,
\end{flalign*}

\noindent where to deduce the second statement we used the fact that $2 (i + A - 4) - 1 > 2i - 1$ for each $i \in [1, B - 1]$ (as $A \ge B \ge 5$). Thus, \eqref{aiproduct1} holds.

 Now, let $m \ge 3$ be a positive integer and suppose that \eqref{aiproduct1} is valid whenever $k \le m - 1$. Let $A_1, A_2, \ldots , A_m$ be positive even integers such that $\sum_{i = 1}^m A_i = N$ and such that at least two of the $A_i$ are greater than or equal to four; assume that $A_1 \ge A_2 \ge \cdots \ge A_m$, so that $A_2 \ge 4$. Then, we have that
\begin{flalign*}
\displaystyle\prod_{i = 1}^m (2A_i - 3)!! & = (2A_m - 3)!! \displaystyle\prod_{i = 1}^{m - 1} (2A_i - 3)!! 	 \\
& \le 15 (2 A_m - 3)!! \big( 2 (N - A_m) - 4 (m - 1) - 3 \big)!! \\
& \le 15 (2 A_m - 3)!! \big( 2 (N - A_m) - 4 (m - 1) - 3 \big)!! \displaystyle\prod_{i = 2}^{A_m - 1} \displaystyle\frac{2 (N - A_m - 2m) + 2i - 1}{2i - 1} \\
& = 15 (2 N - 4m - 3)!!,
\end{flalign*}

\noindent where we have used the fact that $N \ge A_m + 2m$ (since $A_1 \ge A_2 \ge 4$ and each of the $m - 2$ other $A_i$ are all positive, even integers and thus are at least equal to two); this verifies \eqref{aiproduct1}.
\end{proof}

The following lemma estimates sums of products of factorials and will be used in the proofs of Proposition \ref{innermanyestimate} and Proposition \ref{innermanyestimate2}. In what follows, we recall the sets $\mathcal{C}_n (k)$ of compositions and $\mathcal{G}_n (k)$ of nonnegative compositions, as explained in Section \ref{Partitions}.

\begin{lem}  	 	
	
	\label{hestimate}
	
	Let $L \ge a \ge 0$ and $b \ge 0$ be integers with $L$ positive. For any composition $A = (A_1, A_2, \ldots , A_{L - a + 1}) \in \mathcal{C}_L (L - a + 1)$ and nonnegative composition $B = (B_1, B_2, \ldots , B_{L - a + 1}) \in \mathcal{G}_b (L - a + 1)$, let $\mathfrak{s} = \mathfrak{s} (A, B) \in [1, L - a + 1]$ denote the minimal index such that $A_{\mathfrak{s}} + 2 B_{\mathfrak{s}} = \max_{1 \le i \le L - a + 1} (A_i + 2 B_i)$, and if $L \ne a$ then let $\mathfrak{h} = \mathfrak{h} (A, B) \in [1, L - a + 1] \setminus \{ \mathfrak{s} \}$ denote the minimal index such that $A_{\mathfrak{h}} + 2 B_{\mathfrak{h}} = \max_{i \ne \mathfrak{s}} (A_i + 2 B_i)$. In particular, $\mathfrak{h}$ is an index such that $A_{\mathfrak{h}} + 2 B_{\mathfrak{h}}$ is second largest among all $A_i + 2 B_i$.
	
	Then,
	\begin{flalign}
	\label{habl}
	\displaystyle\sum_{A \in \mathcal{C}_L (L - a + 1)} \displaystyle\sum_{B \in \mathcal{G}_b (L - a + 1)} \displaystyle\prod_{i = 1}^{L - a + 1} \displaystyle\frac{ (A_i + 2 B_i)!}{A_i! B_i!} \le \displaystyle\frac{2^{9 L + 5} (a + 2b)!}{a! b!},
	\end{flalign}
	
	\noindent and if $L \ne a$ then
	\begin{flalign}
	\label{gabl}
	\displaystyle\sum_{A \in \mathcal{C}_L (L - a + 1)} \displaystyle\sum_{B \in \mathcal{G}_b (L - a + 1)} (A_{\mathfrak{h}} + 2 B_{\mathfrak{h}} + 1) \displaystyle\prod_{i = 1}^{L - a + 1} \displaystyle\frac{ (A_i + 2 B_i)!}{A_i! B_i!} \le \displaystyle\frac{2^{9 L + 5} (a + 2 b)!}{a! b!}.
	\end{flalign}

\end{lem}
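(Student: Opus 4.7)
The plan is to establish both \eqref{habl} and \eqref{gabl} by bounding $\prod_i(A_i+2B_i)!$ pointwise via Lemma \ref{aicisum} and then summing carefully over $A \in \mathcal{C}_L(L-a+1)$ and $B \in \mathcal{G}_b(L-a+1)$. Writing $A_i = 1 + A'_i$ with $A'_i \ge 0$ summing to $a-1$ and taking $C_i = 1 + 2B_i$, a first application of \eqref{aici1} (with $\sigma$ an index realizing $\max B_i$) yields
$$\prod_i (A_i+2B_i)! \le (a+2B_\sigma)!\prod_{i\ne\sigma}(1+2B_i)!.$$
A second application of \eqref{aici1} to $\prod_{i\ne\sigma}(1+2B_i)!$ (now treating $2B_i$ for $i\ne\sigma$ as variable summing to $2(b-B_\sigma)$, with $C_i=1$) gives $\prod_{i\ne\sigma}(1+2B_i)! \le (2(b-B_\sigma)+1)!$. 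Thus the pointwise bound is $\prod_i(A_i+2B_i)! \le (a+2p)!\,(2(b-p)+1)!$ with $p = B_\sigma$, and this depends on $B$ only through its largest entry.

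Next I would sum over $(A,B)$ as follows. Using $\prod A_i! \ge 1$, the estimate $|\mathcal{C}_L(L-a+1)| \le 2^{L-1}$ from \eqref{ynkestimate} and \eqref{2ll}, and a symmetrization over the $L-a+1$ possible choices of $\sigma$, the left-hand side of \eqref{habl} reduces to a sum of the form
$$2^{O(L)}\cdot\sum_{p=0}^{b}\frac{(a+2p)!\,(2(b-p)+1)!}{p!\,(b-p)!}\,(L-a)^{b-p},$$
where the inner sum over the remaining $B_j$'s (summing to $b-p$ in $L-a$ parts) is evaluated via the standard identity $\sum 1/\prod B_j! = (L-a)^{b-p}/(b-p)!$. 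I would then bound the $p$-sum by noting that $(a+2p)!\,(2(b-p)+1)!$ is log-convex in $p$ (so it concentrates at the endpoints) and by using the identity $\sum_p \binom{b}{p}(L-a)^{b-p} = (L-a+1)^b$ together with $(a+2p)!\,(2(b-p)+1)! \le (a+2b+1)!$ (from the elementary bound $m!n! \le (m+n)!$). Combining with the factorial estimates in \eqref{2ll} should then produce the desired $2^{9L+5}(a+2b)!/(a!b!)$.

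For \eqref{gabl}, the additional factor $(A_\mathfrak{h}+2B_\mathfrak{h}+1)$ corresponds to the "second-largest" slot content and is absorbed by invoking the refined estimate \eqref{2aici} of Lemma \ref{aicisum} in place of \eqref{aici1} at the key step; this contributes one extra factor from the second-maximum slot, which multiplies with the remaining quantities to give the same overall bound.

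The main obstacle is that the accumulated factors of $(L-a+1)^{O(b)}$ arising from $\sum_B 1/\prod B_j!$ can exceed $2^{9L}$ when $b$ is large relative to $L$, so a crude uniform bound will not suffice. I expect the resolution requires a more delicate argument: splitting into cases where $B$ is concentrated (the sharp regime where the above estimate is tight) versus spread out (where the pointwise bound $\prod_i(A_i+2B_i)!\le (a+2p)!(2(b-p)+1)!$ is itself much smaller than the uniform bound $(a+2b)!$), and recombining by careful appeal to \eqref{2ll} so that the $2^{9L+5}$ budget is respected uniformly in both $L$ and $b$.
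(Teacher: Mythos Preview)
Your proposal has a real gap, and to your credit you already put your finger on it: the factor $(L-a+1)^{b}$ (equivalently $(L-a)^{b-p}$) that you pick up from $\sum_B \prod_i 1/B_i!$ is not controlled by any power of $2^{L}$ when $b$ is large relative to $L$. The trouble is structural, not cosmetic. You first bound $\prod_i (A_i+2B_i)!$ pointwise by \eqref{aici1}, obtaining $(a+2p)!\,(2(b-p)+1)!$, and only afterwards divide by $\prod_i A_i! B_i!$ using $\prod A_i!\ge 1$. That second application of \eqref{aici1} is extremely lossy precisely when $B$ is spread out (many small $B_i$'s), which is also exactly the regime where the number of configurations is largest; the looseness and the multiplicity do not cancel. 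Your suggested ``concentrated vs.\ spread'' split does not obviously rescue this: in the spread regime you would need a pointwise bound on $\prod_i(A_i+2B_i)!$ that retains the full profile of $B$, and once you do that you are essentially back to square one.

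The paper's proof avoids separating numerator and denominator altogether. Its key step is to bound the \emph{multinomial coefficient} $\prod_i\binom{A_i+2B_i}{A_i,B_i,B_i}$ directly, via the rewrite $\binom{A_i+2B_i}{A_i,B_i,B_i}=\binom{A_i+2B_i-1}{A_i-1,B_i,B_i}\cdot\frac{A_i+2B_i}{A_i}$ followed by \eqref{aijai}. This produces the correct leading factor $\dfrac{(a+2b-1)!}{a!\,b!^2}$ outright, leaving only a residual $\prod_i(B_i+1)!$ (rather than $\prod_i(1+2B_i)!$) to be summed over $B$. That residual is handled by writing $B$ through its support (size $s$) and a composition $C\in\mathcal{C}_b(s)$, then applying \eqref{aici1} to the $C$-product and summing over the second-largest part; this is what finally absorbs the $(A_{\mathfrak h}+2B_{\mathfrak h}+1)$ weight for \eqref{gabl}. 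The single idea you are missing is the use of \eqref{aijai} on the ratio, not \eqref{aici1} on the numerator.
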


\begin{proof}
	
	If $L = a$, then the left side of \eqref{habl} is equal to $\frac{(a + 2b)!}{a! b!}$ and so \eqref{habl} holds. Thus, we may assume that $L > a$, in which case \eqref{habl} would follow from \eqref{gabl}. It therefore suffices to establish \eqref{gabl}, to which end we set
	\begin{flalign*}
	g (a, b; L) =  \displaystyle\sum_{A \in \mathcal{C}_L (L - a + 1)} \displaystyle\sum_{B \in \mathcal{G}_b (L - a + 1)} (A_{\mathfrak{h}} + 2 B_{\mathfrak{h}} + 1) \displaystyle\prod_{i = 1}^{L - a + 1} \displaystyle\frac{ (A_i + 2 B_i)!}{A_i! B_i!}.
	\end{flalign*}

	First observe that, for any composition $A = (A_1, A_2, \ldots , A_{L - a + 1}) \in \mathcal{C}_L (L - a + 1)$ and nonnegative composition $B = (B_1, B_2, \ldots , B_{L - a + 1}) \in \mathcal{G}_b (L - a + 1)$, we have that
	\begin{flalign*}
	\displaystyle\prod_{i = 1}^{L - a + 1} \binom{A_i + 2 B_i}{A_i, B_i, B_i} = \displaystyle\prod_{i = 1}^{L - a + 1} \displaystyle\binom{A_i + 2 B_i - 1}{A_i - 1, B_i, B_i} \displaystyle\frac{A_i + 2 B_i}{A_i} & \le \binom{a + 2b - 1}{a - 1, b, b} \displaystyle\prod_{i = 1}^{L - a + 1} \left( \displaystyle\frac{2 B_i}{A_i} + 1 \right),
	\end{flalign*}
	
	\noindent where in the second statement we used \eqref{aijai} (with the $n$ there equal to $L - a + 1$ here; $r$ there equal to $3$ here; the $A_{i, 1}$ there equal to $A_i - 1$ here; and the $A_{i, 2} = A_{i, 3}$ there equal to $B_i$ here) together with the facts that $\sum_{i = 1}^{L - a + 1} (A_i - 1) = a - 1$ and $\sum_{i = 1}^{L - a + 1} B_i = b$. Hence, since $\frac{2B_i}{A_i} + 1 \le 2 (B_i + 1)$ and $a \le L \le 2^L$ (recall the first estimate in \eqref{2ll}), it follows that
	\begin{flalign}
	\label{aibi2}
	\displaystyle\prod_{i = 1}^{L - a + 1} \binom{A_i + 2 B_i}{A_i, B_i, B_i} & \le \displaystyle\frac{2^{2L} (a + 2b - 1)!}{a! b!^2} \displaystyle\prod_{i = 1}^L (B_i + 1).
	\end{flalign}
	
	 Additionally, since $A_{\mathfrak{h}} \le L - 1 \le 2^{L - 1}$, we have $A_{\mathfrak{h}} + 2 B_{\mathfrak{h}} + 1 \le 2^L (B_{\mathfrak{h}} + 1)$. Together with \eqref{aibi2} and the fact that $\big| \mathcal{C}_L (L - a + 1) \big| = \binom{L - 1}{L - a} \le 2^{L - 1}$ (recall the first identity in \eqref{ynkestimate}), this yields  	
	\begin{flalign}
	\label{gablestimate1}
	\begin{aligned}
	g (a, b; L) & \le \displaystyle\frac{2^{2L} (a + 2 b - 1)!}{a! b!^2} \displaystyle\sum_{A \in \mathcal{C}_L (L - a + 1)} \displaystyle\sum_{B \in \mathcal{G}_b (L - a + 1)}  (A_{\mathfrak{h}} + 2 B_{\mathfrak{h}} + 1) \displaystyle\prod_{i = 1}^{L - a + 1} (B_i + 1)! \\
	& \le \displaystyle\frac{2^{4L} (a + 2 b - 1)!}{a! b!^2} \displaystyle\max_{A\in \mathcal{C}_L (L - a + 1)} \displaystyle\sum_{B \in \mathcal{G}_b (L - a + 1)}  ( B_{\mathfrak{h}} + 1) \displaystyle\prod_{i = 1}^{L - a + 1} (B_i + 1)!.
	\end{aligned}
	\end{flalign}
	
	 To estimate the right side of \eqref{gablestimate1}, observe that any $B = (B_1, B_2, \ldots , B_{L - a + 1}) \in \mathcal{G}_b (L - a + 1)$ is uniquely determined an integer $s \in [0, L - a + 1]$; a subset $T = \{ t_1, t_2, \ldots , t_s \} \subseteq \{ 1, 2, \ldots , L - a + 1 \}$; and a composition $C = (C_1, C_2, \ldots , C_s) \in \mathcal{C}_b (s)$. Indeed, given such an $s$, $T$, and $C$, one produces $B$ by setting $B_{t_i} = C_i$ for each $i \in [1, s]$ and $B_j = 0$ for each $j \notin T$.
	
	 Therefore, instead of summing the right side of \eqref{gablestimate1} over $B$, we can sum it over all $s$, $T$, and $C$. Denote by $C_h$ by the second-largest element in $C$ (unless $s < 2$, in which case set $C_h = 0$), and observe that $B_{\mathfrak{h}} \le L + C_h \le 2^{L - 1} (C_h + 1)$ since $a \le L \le 2^{L - 1}$. Thus, since there are $\binom{L - a + 1}{s} \le 2^{L + 1}$ possibilities for $T$ and $L - a + 2 \le 2^{L + 1}$ possibilities for $s$, we find that
	\begin{flalign}
	\label{gablestimate2}
	\begin{aligned}
	g (a, b; L) &  \le \displaystyle\frac{2^{5L} (a + 2 b - 1)!}{a! b!^2} \displaystyle\sum_{s = 0}^{L - a + 1} \displaystyle\sum_{|T| = s} \displaystyle\sum_{C \in \mathcal{C}_b (s)}  (C_h + 2) \displaystyle\prod_{i = 1}^s (C_i + 1)! \\
	& \le \displaystyle\frac{2^{5L} (a + 2 b - 1)!}{a! b!^2} \displaystyle\sum_{s = 0}^{L - a + 1} \displaystyle\sum_{|T| = s} \displaystyle\sum_{C \in \mathcal{C}_b (s)}  (C_h + 2)! \displaystyle\prod_{\substack{1 \le i \le s \\ i \ne h}} (C_i + 1)! \\
	& \le \displaystyle\frac{2^{7 L + 2} (a + 2 b - 1)!}{a! b!^2} \displaystyle\max_{s \in [0, L - a + 1]} \displaystyle\sum_{C \in \mathcal{C}_b (s)} (C_h + 2)! \displaystyle\prod_{\substack{ 1 \le i \le s \\ i \ne h}} (C_i + 1)!.
	\end{aligned}
	\end{flalign}
	
	 Next, if the maximum of the right side of \eqref{gablestimate2} is taken at $s = 0$, then the right side is bounded by $\frac{2^{7L + 3} (a + 2b - 1)!}{a! b!^2}$, and so the lemma holds. Similarly, if it is taken at $s = 1$, then $C_h = 0$ and $C = (b)$, meaning that the quantity on the right side of \eqref{gablestimate2} is equal to $\frac{2^{7L + 3} (b + 1)! (a + 2b - 1)!}{a! b!^2} \le \frac{2^{7L + 3} (a + 2b)!}{a! b!}$, and the lemma again holds. Thus, we may assume that the maximum on the right side of \eqref{gablestimate2} is taken at $s \ge 2$, so that $C_h \ge 1$.

	Now we apply \eqref{aici1} with the $k$ there equal to $s - 1$; the $C_i$ there each equal to $2$; and the $A_i$ there equal to the $C_i - 1$ here (for $i \ne h$). Since $\sum_{i \ne h} (C_i - 1) = b - s - C_h + 1$ and since $b - s - C_h + 3 \le a + 2b$, this yields
	\begin{flalign}
	\label{gablestimate3}
	\begin{aligned}
	 g(a, b; L) & \le \displaystyle\frac{2^{7L + s + 1} (a + 2 b - 1)!}{a! b!^2} \displaystyle\max_{s \in [2, L - a  +1]} \displaystyle\sum_{C \in \mathcal{C}_b (s)} (C_h + 2)! (b - C_h - s + 3)! \\
	 & \le \displaystyle\frac{2^{8 L + 1} (a + 2 b)!}{a! b!^2} \displaystyle\max_{s \in [2, L - a  +1]} \displaystyle\sum_{C \in \mathcal{C}_b (s)} (C_h + 2)! (b - C_h - s + 2)!.
	 \end{aligned}
	 \end{flalign}
	
	Let us estimate the right side of \eqref{gablestimate3}. To that end, observe that there are $s \le L + 1 \le 2^L$ possibilities for $h \in [1, s]$ and that $C_h \in \big[ 1, \frac{b}{2} \big]$ (since $C_h$ denotes the second largest element of the composition $C$, which has total size $b$). Thus, relabeling $C_h = D$, summing over all possible $D$ and $h$, and using the fact that $\big| \mathcal{C}_{b - D} (s - 1) \big| = \binom{b - D - 1}{s - 2} \le \binom{b - D}{s - 2}$ (again due to \eqref{ynkestimate}) yields
	\begin{flalign}
	\label{gabl1}
	\begin{aligned}
	g(a, b; L) & \le \displaystyle\frac{2^{9L + 1} (a + 2 b)!}{a! b!^2} \displaystyle\max_{s \in [2, L - a + 1]} \displaystyle\sum_{D = 1}^{\lfloor b / 2 \rfloor} \big| \mathcal{C}_{b - D} (s - 1) \big| (b - s + 2 - D)! (D + 1)! \\
	& = \displaystyle\frac{2^{9L + 1} (a + 2 b)!}{a! b!^2} \displaystyle\max_{s \in [2, L - a + 1]} \displaystyle\sum_{D = 1}^{\lfloor b / 2 \rfloor } \binom{b - D - 1}{s - 2} (b - s + 2 - D)! (D + 2)! \\
	& \le  \displaystyle\frac{2^{9L + 1} (a + 2 b)!}{a! b!^2} \displaystyle\sum_{D = 1}^{\lfloor b / 2 \rfloor } (b - D)! (D + 2)!.
	\end{aligned}
	\end{flalign}

	\noindent To bound the right side of \eqref{gabl1}, observe for $b \le 3$ that $\sum_{D = 1}^{\lfloor b / 2 \rfloor } (b - D)! (D + 2)! \le 3b!$ and for $b \ge 4$ that
	\begin{flalign*}
	\displaystyle\sum_{D = 1}^{\lfloor b / 2 \rfloor} (b - D)! (D + 2)! \le \displaystyle\sum_{D = 1}^{b - 2} (b - D)! (D + 2)!  \le 6 (b - 1)! + 2 b! + (b + 2)! \displaystyle\sum_{D = 2}^{b - 3} \binom{b + 2}{D + 2}^{-1} \\ \le 4b! + (b + 2)! (b - 4) \binom{b + 2}{3}^{-1} \le 10 b!,
	\end{flalign*}
	
	\noindent where we have used the fact that $\min_{4 \le i \le b - 1} \binom{b + 2}{i} = \binom{b + 2}{b - 1} = \binom{b + 2}{3}$. Together with \eqref{gabl1}, this establishes the lemma.
\end{proof}

 The following lemma also bounds sums of products and will be used in Section \ref{EstimateC}.

\begin{lem}
	
	\label{sumsb}
	
	Let $n \le r$ and $k_1, k_2, \ldots , k_n$ be positive integers; denote $k = \sum_{i = 1}^n k_i$. Then,
	\begin{flalign*}
	\displaystyle\sum_{\ell \in \mathcal{C}_r (n)} \displaystyle\prod_{i = 1}^n \displaystyle\frac{k_i^{2\ell_i - 2}}{(2 \ell_i - 2)!} \le \displaystyle\frac{k^{2(r - n)}}{(2r - 2n)!}.
	\end{flalign*}

\end{lem}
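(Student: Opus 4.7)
The plan is to rewrite the inequality in a form where it becomes an immediate consequence of the multinomial theorem. Set $M = r - n$ and substitute $m_i = \ell_i - 1$ in the left-hand side. Since $\ell \in \mathcal{C}_r(n)$ consists of positive compositions of $r$ into $n$ parts, the tuple $(m_1, \ldots, m_n)$ ranges over $\mathcal{G}_M(n)$, i.e., over nonnegative compositions of $M$ into $n$ parts, and $2\ell_i - 2 = 2 m_i$. After multiplying both sides by $(2M)! = (2r - 2n)!$, the claim becomes
$$\sum_{(m_1, \ldots, m_n) \in \mathcal{G}_M(n)} \binom{2M}{2m_1,\ 2m_2,\ \ldots,\ 2m_n} \prod_{i=1}^n k_i^{2 m_i} \le k^{2M}.$$

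Next, I would expand the right-hand side by the multinomial theorem:
$$k^{2M} = \Bigl(\sum_{i=1}^n k_i\Bigr)^{2M} = \sum_{(j_1, \ldots, j_n) \in \mathcal{G}_{2M}(n)} \binom{2M}{j_1,\ j_2,\ \ldots,\ j_n} \prod_{i=1}^n k_i^{j_i}.$$
Finally, I would observe that the left-hand side is precisely the sub-sum obtained by restricting the index set to those tuples $(j_1, \ldots, j_n) \in \mathcal{G}_{2M}(n)$ all of whose coordinates are even (via $j_i = 2 m_i$). Because each $k_i$ is a positive integer, every term in the multinomial expansion is nonnegative, and discarding the terms containing at least one odd $j_i$ only decreases the sum. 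This gives the required bound.

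There is no real obstacle in this argument; the proof is essentially a single observation once one performs the shift $\ell_i \mapsto \ell_i - 1$. The only points requiring any care are verifying that the index set shifts correctly from $\mathcal{C}_r(n)$ to $\mathcal{G}_{r-n}(n)$, and matching the exponents and factorials $(2m_i, (2m_i)!)$ against the multinomial coefficients; both are routine. In particular, equality holds in the bound exactly when $n = 1$ (in which case the LHS is a single term and the RHS contains no "odd-coordinate" terms to discard).
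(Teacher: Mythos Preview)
Your proof is correct and is essentially the same as the paper's: both recognize that, after multiplying by $(2r-2n)!$, the left-hand side is exactly the sub-sum of the multinomial expansion of $k^{2r-2n}$ over those nonnegative compositions of $2r-2n$ all of whose parts are even, and hence is bounded by the full expansion since every term is nonnegative. The paper phrases this as the inclusion $\{(2\ell_1-2,\ldots,2\ell_n-2):\ell\in\mathcal{C}_r(n)\}\subseteq\mathcal{G}_{2r-2n}(n)$ rather than making the substitution $m_i=\ell_i-1$ explicit, but the argument is identical.
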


\begin{proof}
	
	This follows from the fact that
	\begin{flalign*}
	(2r - 2n)! \displaystyle\sum_{\ell \in \mathcal{C}_r (n)} \displaystyle\prod_{i = 1}^n \displaystyle\frac{k_i^{2\ell_i - 2}}{(2 \ell_i - 2)!} & \le (2r - 2n)! \displaystyle\sum_{A \in \mathcal{G}_{2r - 2n} (n)} \displaystyle\prod_{i = 1}^n \displaystyle\frac{k_i^{A_i}}{A_i!} \\
	& = \displaystyle\sum_{A\in \mathcal{G}_{2r - 2n} (n)} \binom{2r - 2n}{A_1, A_2, \ldots , A_n} \displaystyle\prod_{i = 1}^n k_i^{A_i} = k^{2r - 2n},
	\end{flalign*}
	
	\noindent where in the last equality we used the fact that $\sum_{i = 1}^n k_i = k$.
\end{proof}

We conclude with the following lemma, which also will be used in Section \ref{EstimateC}, that estimates factorials.

\begin{lem}
		
\label{krnestimate}

Let $k$ and $a$ be positive integers with $k \ge 2 a$. Then,
\begin{flalign}
\label{kaestimate}
k^a \le \displaystyle\frac{2^{8a} (k - 1)!}{(k - 2a)!}; \qquad k^{2a} \le \displaystyle\frac{2^{8a} (k - 1)!}{(k - 2a - 1)!}, \quad \text{if $k \ge 2a + 1$.}
\end{flalign}

\end{lem}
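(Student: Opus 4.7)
\textbf{Proof plan for Lemma \ref{krnestimate}.} The two bounds have the same flavor, so I will concentrate on the first, $k^a \le 2^{8a}(k-1)!/(k-2a)!$, and indicate at the end how to adapt the argument to the second. The starting point is to unpack the right side as a product of $2a-1$ factors,
\begin{equation*}
\frac{(k-1)!}{(k-2a)!} \; = \; \prod_{i=1}^{2a-1}(k-i),
\end{equation*}
each of which lies between $k-2a+1 \ge 1$ and $k-1$. The plan is then to split on the size of $k$ relative to $a$, using the enormous slack in the constant $2^{8a}$ to absorb all errors.

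\emph{Regime A (large $k$).} Suppose $k \ge 4a-2$, so that $k - 2a + 1 \ge k/2$. Then every factor $k-i$ with $1 \le i \le 2a-1$ is at least $k/2$, and therefore
\begin{equation*}
\prod_{i=1}^{2a-1}(k-i) \; \ge \; \Bigl(\frac{k}{2}\Bigr)^{2a-1} \; = \; \frac{k^{2a-1}}{2^{2a-1}} \; \ge \; \frac{k^{a}}{2^{2a-1}} \; \ge \; \frac{k^{a}}{2^{8a}},
\end{equation*}
using $k \ge 1$ and $a \ge 1$. This settles the first bound in this regime.

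\emph{Regime B (moderate $k$, $2a \le k < 4a-2$).} Here $k^{a} \le (4a)^{a} = 2^{2a}a^{a}$, so it suffices to prove $\prod_{i=1}^{2a-1}(k-i) \ge a^{a}/2^{6a}$. Since the product is increasing in $k$, it is enough to handle $k=2a$, where the product equals $(2a-1)!$. Applying the Stirling-type bound $n! \ge 2n^{n+1/2}e^{-n}$ from \eqref{2ll} gives $(2a-1)! \ge 2(2a-1)^{2a-1/2}e^{-(2a-1)}$, which grows roughly like $(2a/e)^{2a}$ and thus overwhelms $a^{a}/2^{6a}$ for all $a$ larger than some small absolute constant; the handful of remaining small $a$ are verified directly.

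\emph{The second bound.} For $k^{2a} \le 2^{8a}(k-1)!/(k-2a-1)!$ with $k \ge 2a+1$, write $(k-1)!/(k-2a-1)! = \prod_{i=1}^{2a}(k-i)$, a product of $2a$ factors. In Regime A ($k \ge 4a$) each factor is at least $k/2$, yielding the product is at least $(k/2)^{2a} = k^{2a}/2^{2a} \ge k^{2a}/2^{8a}$ immediately. In Regime B ($2a+1 \le k < 4a$), one uses $k^{2a} \le (4a)^{2a} = 2^{4a}a^{2a}$ together with the same Stirling estimate applied to $(2a)!$ (the minimum of the product, attained at $k = 2a+1$), and the comparison $(2a)! \gtrsim a^{2a}/2^{4a}$ closes the gap with the same room to spare. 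The only mildly delicate point in the whole argument is the Stirling computation in Regime B, but since the factorial grows super-exponentially faster than the target, the inequality holds with enormous slack, reflecting how loose the constant $2^{8a}$ is.
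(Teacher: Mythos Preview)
Your proof is correct and follows essentially the same two-regime strategy as the paper: split at $k \approx 4a$, use $k-i \ge k/2$ in the large-$k$ regime, and invoke the Stirling-type bound from \eqref{2ll} in the small-$k$ regime. The paper's Regime B is packaged slightly more cleanly---it bounds $k^{2a}\prod_{i=1}^{2a}(k-i)^{-1} \le k^{k-1}/(k-1)! \le e^{k-1}/2 \le 2^{8a}$ directly, avoiding your separate ``check small $a$ by hand'' step---but the underlying idea is the same.
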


\begin{proof}
	
We only establish the second estimate in \eqref{kaestimate}, since the proof of the first is very similar. First observe that if $k \ge 4a$ then $\frac{k}{k - i} \le 2$ for each $1 \le i \le 2a$, meaning that
\begin{flalign*}
k^{2a} \displaystyle\prod_{i = 1}^{2a} \displaystyle\frac{1}{k - i} \le 2^{2a},
\end{flalign*}

\noindent from which we deduce the second estimate in \eqref{kaestimate}. If $k \le 4a$, then since $\frac{k}{k - i} \ge 1$ for any $i \in [2a + 1, k - 1]$ and since $(k - 1)! \ge 2 \big( \frac{k}{e} \big)^{k - 1}$ (due to the second estimate of \eqref{2ll}), we have that
\begin{flalign*}
k^{2a} \displaystyle\prod_{i = 1}^{2a} \displaystyle\frac{1}{k - i} \le k^{k - 1} \displaystyle\prod_{i = 1}^{k - 1} \displaystyle\frac{1}{k - i} = \displaystyle\frac{k^{k - 1}}{(k - 1)!} \le \displaystyle\frac{e^{k - 1}}{2} \le 2^{2k - 1} \le  2^{8a},	
\end{flalign*}

\noindent from which we again deduce \eqref{kaestimate}.
\end{proof}

\section{Evaluating the Volumes}

The goal of this section is to explain several ways to explicitly evaluate the strata volumes $\nu_1 \big( \mathcal{H}_1 (m) \big)$ for various partitions $m$. We begin in Section \ref{StratumPrincipal} by using an identity of Eskin-Okounkov \cite{ANBCTV} to establish Theorem \ref{volumeestimateasymptotic} in the case of the principal stratum $m = 1^{2g - 2}$. Then, in Section \ref{VolumeEvaluate} we recall the general algorithm of Eskin-Okounkov \cite{ANBCTV} that finds the stratum volume $\nu_1 \big(\mathcal{H}_1 (m) \big)$, for any given $m = (m_1, m_2, \ldots , m_n)$. In Section \ref{VolumeEstimateInitial} we outline how to use this algorithm to establish Theorem \ref{volumeestimateasymptotic} (or in fact the equivalent Theorem \ref{volume}).

\subsection{The Principal Stratum}

\label{StratumPrincipal}

In this section we establish \eqref{volumeestimateasymptotic} when $m = 1^{2g - 2}$ is the principal stratum using an identity of Eskin-Okounkov \cite{ANBCTV} that provides an explicit expression for the volume $\nu_1 \big( \mathcal{H}_1 (1^{2g - 2}) \big)$. Following the notation in \cite{ANBCTV}, we will use and estimate a quantity $\textbf{c} (m)$ instead of the Masur-Veech volume $\nu_1 \big( \mathcal{H}_1 (m) \big)$. In view of Remark 2 of \cite{VSDCLG}, the two quantities are related by
\begin{flalign}
\label{cnuh}
\nu_1 \big( \mathcal{H}_1 (m) \big) = 2 \textbf{c} (m + 1),
\end{flalign}

\noindent where if $m = (m_1, m_2, \ldots , m_n)$ then $m + 1 = (m_1 + 1, m_2 + 1, \ldots , m_n + 1)$; we can take \eqref{cnuh} to be the definition of $\textbf{c} (m + 1)$.

The below Lemma \ref{identityprincipal}, which originally appeared as Theorem 7.1 of \cite{ANBCTV}, yields an identity for $\textbf{c} (2, 2, \ldots, 2)$ (for any even positive integer $n$) that will be asymptotically analyzed in Proposition \ref{principalstratumasymptotic}; this will imply \eqref{volumeestimateasymptotic} in the case of the principal stratum. In what follows, we define the quantity (which was originally given by Definition 6.6 of \cite{ANBCTV} and will also appear later)
\begin{flalign}
\label{zk}
\mathfrak{z} (k) = \big( 2 - 2^{2 - k} \big) \zeta (k) \textbf{1}_{k \in 2 \mathbb{Z}_{\ge 0}},
\end{flalign}

\noindent where $\zeta (k)$ denotes the Riemann zeta function and $\textbf{1}_E$ denotes the indicator for any event $E$.

\begin{lem}[{\cite[Theorem 7.1]{ANBCTV}}]

\label{identityprincipal}
	
For any even positive integer $n$, let $\kappa = \kappa_n$ denote the partition $2^n = (2, 2, \ldots , 2)$, in which $2$ appears $n$ times. Then, we have that
\begin{flalign*}
\textbf{\emph{c}} (\kappa_n) = n! \displaystyle\sum_{\substack{\mu \in \mathbb{Y}_{n + 2} \\ \mu \text{\emph{ Even}}}} \displaystyle\frac{(-1)^{\ell (\mu) - 1}}{ \big( 2n - \ell (\mu) + 2)! \prod_{i = 2}^{\infty} M_i (\mu)!}  \displaystyle\prod_{i = 1}^{\ell (\mu)} (2 \mu_i - 3)!!  \mathfrak{z}(\mu_i),
\end{flalign*}

\noindent where $\mu$ is summed over all partitions of $n + 2$ with only even parts and we recall from Section \ref{Partitions} that $M_i (\mu)$ denotes the multiplicity of $i$ in $\mu$.

\end{lem}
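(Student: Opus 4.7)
The plan is to implement the Eskin-Okounkov framework that ties Masur-Veech volumes to asymptotic counts of branched covers of the torus via representation theory of the symmetric group; the stated formula is the specialization of that algorithm to the profile $\kappa_n = 2^n$. Concretely, one first identifies $\textbf{c}(\kappa_n)$ with the top-degree coefficient in a degree parameter $d$ (equivalently, the $q \to 1$ leading behavior) of the generating function counting connected degree-$d$ covers of the torus branched with profile $\kappa_n \cup 1^{d - 2n}$ over a single point, weighted by automorphisms. By Burnside's formula this count is a sum over irreducible representations $\lambda$ of the symmetric group $\mathfrak{S}(d)$ of $(\dim \lambda / d!)^2 \chi^\lambda(\kappa_n \cup 1^{d-2n})$. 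Reformulated through the normalized central character, this becomes a $q$-weighted average over partitions $\lambda$ of a shifted symmetric function that is essentially the $n$-th power of $p_2^{\sharp}$, the shifted power-sum indexed by $2$.

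The second step is to expand $(p_2^{\sharp})^n$ in Okounkov's basis of completed power sums $\widetilde p_k$. Each completed cycle $\widetilde p_2$ acts as a vertex operator in the infinite-wedge fermionic Fock space, and the product $\widetilde p_2^{\,n}$ decomposes as a linear combination of products $\widetilde p_{\mu_1} \cdots \widetilde p_{\mu_{\ell(\mu)}}$ indexed by partitions $\mu \in \mathbb{Y}_{n+2}$ with only even parts; the shift $|\mu| = n + 2$ reflects that each Wick pairing in the expansion contributes two ``units,'' and each fusion of several two-cycles into a single longer even cycle $\mu_i$ introduces the double-factorial weight $(2\mu_i - 3)!!$. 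Imposing the connectedness of the covers being counted then triggers a Möbius inversion on the partition lattice, producing the sign $(-1)^{\ell(\mu) - 1}$ together with the multinomial coefficient $n!/\prod_i M_i(\mu)!$ that counts the number of ways to group the $n$ original twos into the parts of $\mu$. Finally, the Bloch-Okounkov theorem evaluates the $q$-bracket of each $\widetilde p_k$ in terms of (quasi-)modular Eisenstein data whose extracted value is exactly $\mathfrak{z}(k) = (2 - 2^{2-k}) \zeta(k)$ when $k$ is even (and vanishes otherwise, consistent with $\mu$ being even), while the remaining denominator $(2n - \ell(\mu) + 2)!$ arises from normalizing out the $(d - 2n)$ trivial fixed-point cycles in $\kappa_n \cup 1^{d-2n}$ when one passes to the principal-order term in $d$.

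The main obstacle is the middle step: accurately carrying out the completed-cycle expansion of $(p_2^{\sharp})^n$ together with the Möbius inversion that enforces connectedness. The double factorial $(2\mu_i - 3)!!$ must emerge precisely from counting pairings in iterated commutators on Fock space, and the set-partition combinatorial identity that separates connected from disconnected covers must deliver both the sign $(-1)^{\ell(\mu) - 1}$ and the exact $(2n - \ell(\mu) + 2)!^{-1}$ coefficient with the indexing stated. This is the delicate core of the Eskin-Okounkov derivation in \cite{ANBCTV}, and it is where the detailed shape of the identity in Lemma \ref{identityprincipal} is ultimately fixed; the surrounding steps (Burnside's reduction, Bloch-Okounkov evaluation) are essentially formal once this expansion is controlled.
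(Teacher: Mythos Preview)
The paper does not prove this lemma at all: it is stated as a direct citation of Theorem~7.1 of \cite{ANBCTV} and is used as a black box. So there is no ``paper's own proof'' to compare against; your proposal is effectively an attempt to summarize the Eskin--Okounkov argument rather than to supply something the present paper does.

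As a summary of \cite{ANBCTV}, your outline is broadly on target---the ingredients (Burnside reduction to characters, shifted symmetric functions and completed cycles on the infinite wedge, connected/disconnected M\"obius inversion, and the quasimodular evaluation producing $\mathfrak{z}(k)$) are the right ones. But what you have written is a roadmap, not a proof: you explicitly identify the crucial step (the expansion of $(p_2^{\sharp})^n$ in completed cycles with the exact combinatorial weights $(2\mu_i-3)!!$, $(-1)^{\ell(\mu)-1}$, and $(2n-\ell(\mu)+2)!^{-1}$) as ``the main obstacle'' and then defer it back to \cite{ANBCTV}. Some of the heuristic justifications you offer for individual factors are also imprecise; for instance, attributing the factorial $(2n-\ell(\mu)+2)!$ purely to ``normalizing out the $(d-2n)$ trivial fixed-point cycles'' conflates the role of the dimension/degree normalization with the weight bookkeeping in the top-degree extraction. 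If the intent is simply to indicate why the cited identity is plausible, your sketch serves that purpose; if the intent is an independent proof, the central computation is missing.
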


Using Lemma \ref{identityprincipal} we will establish the below proposition, which verifies Theorem \ref{volumeestimateasymptotic} in the special case of the principal stratum.

\begin{prop}

\label{principalstratumasymptotic}

For any positive integer $g > 1$, we have that
\begin{flalign}
\Big| 2^{2g - 4} \nu_1 \big( \mathcal{H}_1 (1^{2g - 2}) \big) - 1 \Big|\le \displaystyle\frac{2^{20}}{g}.
\end{flalign}
\end{prop}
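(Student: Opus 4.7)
The strategy is to apply Lemma \ref{identityprincipal} with $n = 2g - 2$, which combined with the relation $\nu_1 \big( \mathcal{H}_1 (1^{2g-2}) \big) = 2 \mathbf{c}(\kappa_{2g-2})$ from \eqref{cnuh} yields
\[
2^{2g - 4} \nu_1 \big( \mathcal{H}_1 (1^{2g-2}) \big) = 2^{2g - 3} \sum_{\substack{\mu \in \mathbb{Y}_{2g} \\ \mu \text{ Even}}} T(\mu),
\]
where $T(\mu)$ denotes the summand in the lemma. I would single out the one-part partition $\mu = (2g)$ as the conjecturally dominant contribution and estimate all other $\mu$ as errors of order $1/g$.

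For the dominant term, the identities $(4g - 3)! = (4g - 3)!! \, (4g - 4)!!$ and $(4g - 4)!! = 2^{2g - 2} (2g - 2)!$ make the factorials cancel cleanly, giving $T_{(2g)} = \mathfrak{z}(2g) / 2^{2g - 2}$ and hence $2^{2g - 3} T_{(2g)} = (1 - 2^{1 - 2g}) \zeta(2g)$. The final bound of \eqref{2ll} then shows $\big| 2^{2g - 3} T_{(2g)} - 1 \big| \le 8 \cdot 4^{-g}$, which is easily absorbed into $2^{20} / g$.

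For the remaining $\mu$ with $k := \ell(\mu) \ge 2$, I would first record the crude bound $\mathfrak{z}(\mu_i) \le 2 \zeta(2) \le 4$ to obtain
\[
|T(\mu)| \le \frac{4^k (2g - 2)! \prod_i (2 \mu_i - 3)!!}{(4g - k - 2)! \prod_{i \ge 2} M_i(\mu)!},
\]
and then split the non-dominant sum into three regimes. When at least two parts of $\mu$ exceed $2$, estimate \eqref{aiproduct1} of Lemma \ref{aicisum} (with $A_i = \mu_i$ and $N = 2g$) gives $\prod (2 \mu_i - 3)!! \le 15 \, (4g - 4k - 3)!!$; combining this with the factorization $(4g - k - 2)! = 2^{2g - 2} (2g - 2)! \cdot (4g - k - 3)(4g - k - 5) \cdots (4g - 4k - 1)$ and counting partitions via $\binom{g - 1}{k - 1}$ from \eqref{ynkestimate} shows this family contributes at most $O(1/g^2)$ after scaling by $2^{2g - 3}$. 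The extremal partition $\mu = 2^g$ is handled by direct Stirling-type bounds from \eqref{2ll}, producing an exponentially small contribution.

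The main obstacle is the ``near-dominant'' regime in which $\mu$ has exactly one part $\mu_j \ge 4$, with the remaining $k - 1$ parts equal to $2$, since here \eqref{aiproduct1} does not apply and these partitions genuinely produce an error of order $1/g$. For such $\mu$, $\mu_j = 2g - 2k + 2$ and $(2 \mu_j - 3)!! = (4g - 4k + 1)!!$; a direct computation using $(4g - k - 2)! = (4g - k - 2)!! \, (4g - k - 3)!!$ together with the identities used for the dominant term reduces the scaled absolute value $2^{2g - 3} |T(\mu)|$ to a quantity of order $1 / \big[ g^{k - 1} (k - 1)! \big]$ (up to absolute constants coming from the $\mathfrak{z}$-values). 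Summing this geometric series over $k \ge 2$, and noting that for each $k < g$ there is a unique such $\mu$, yields an $O(1/g)$ contribution dominated by the $k = 2$ term $\mu = (2, 2g - 2)$ (which matches the $\pi^2 / (24 g)$ correction visible in \eqref{12g2estimate}). Combining the three cases with generously chosen constants produces the claimed bound $2^{20}/g$.
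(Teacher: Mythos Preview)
Your proposal is correct and follows essentially the same approach as the paper: isolate the one-part partition $\mu=(2g)$ as the dominant term, then split the remaining even partitions into the ``near-dominant'' family (one part $\ge 4$, the rest equal to $2$) handled by direct computation, and the family with at least two parts $\ge 4$ handled via \eqref{aiproduct1}. The only organizational difference is that you treat $\mu = 2^g$ as a separate third case, whereas the paper absorbs it into the near-dominant family $\Xi$ (it is the extreme member $\xi^{(g)}$ there); this is cosmetic and does not affect the argument.
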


\begin{proof}
	
Throughout this proof, set $n = 2g - 2$. Combining \eqref{cnuh} and Lemma \ref{identityprincipal}, we deduce that
\begin{flalign}
\label{principal1}
2^{n - 2} \nu_1 \big( \mathcal{H}_1 (1^n) \big) = 2^{n - 1} \textbf{c} (\kappa_n) = 2^{n - 1} n! \displaystyle\sum_{\substack{|\mu| \in \mathbb{Y}_{n + 2} \\ \mu \text{ Even}}} \displaystyle\frac{(-1)^{\ell (\mu) - 1}}{\big( 2n - \ell (\mu) + 2)!  \prod_{i = 2}^{\infty} M_i (\mu)!} \displaystyle\prod_{i = 1}^{\ell (\mu)} (2 \mu_i - 3)!!  \mathfrak{z}(\mu_i) .
\end{flalign}

Let us begin by removing the one-part partition $\mu = (n + 2)$ from the sum on the right side of \eqref{principal1}. To that end, observe that the contribution of the $\mu = (n + 2)$ term (which satisfies $\ell (\mu) = 1$) is equal to
\begin{flalign*}
\frac{2^{n - 1} n! (2n + 1)!!}{(2n + 1)!} \mathfrak{z} (n + 2) = \displaystyle\frac{\mathfrak{z} (n + 2)}{2} = (1 - 2^{- n - 1}) \zeta (n + 2)  \in \left[ 1 - \displaystyle\frac{1}{2^{n - 1}}, 1 + \displaystyle\frac{1}{2^{n - 1}} \right],
\end{flalign*}

\noindent where to deduce the last statement we used the last estimate of \eqref{2ll}.

Thus, it follows from \eqref{principal1} that
\begin{flalign}
\label{principal2}
\Big| 2^{n - 2} \nu_1 \big( \mathcal{H}_1 (1^n) \big) - 1 \Big| \le \displaystyle\frac{1}{2^{n - 1}} + 2^{n - 1} n! \displaystyle\sum_{\substack{|\mu| \in \mathbb{Y}_{n + 2} \\ \mu \text{ Even} \\ \ell (\mu) \ge 2}} \displaystyle\frac{1}{\big( 2n - \ell (\mu) + 2)!} \displaystyle\prod_{i = 1}^{\ell (\mu)} (2 \mu_i - 3)!!  \mathfrak{z}(\mu_i) .
\end{flalign}

It remains to show that the sum on the right side of \eqref{principal2} is $\mathcal{O} \big( \frac{1}{n} \big)$, to which end we will divide this sum into two contributions. Specifically, for each integer $r \ge 2$, let $\xi^{(r)} = \xi = \big( \xi_1, \xi_2, \ldots , \xi_r \big)$ denote the partition of length $r$ such that $\xi_1 = n + 4 - 2r$ and $\xi_2 = \xi_3 = \cdots = \xi_r = 2$. Set $\Xi =	 \big\{ \xi^{(2)}, \xi^{(3)}, \ldots , \xi^{(n / 2 + 1)} \big\}$. Furthermore, for each $r \ge 2$, let $\Omega (r) = \Omega_n (r)$ denote the set of partitions $\mu = (\mu_1, \mu_2, \ldots , \mu_r) \in \mathbb{Y}_{n + 2}$ such that $\ell (\mu) = r$; each $\mu_i$ is even, and $\mu \notin \Xi$. The last condition is equivalent to $\mu_2 \ge 4$ and implies that $\ell (\mu) \le \frac{n}{2}$.

Then \eqref{principal2} implies that
\begin{flalign}
\label{principal3}
\Big| 2^{n - 2} \nu_1 \big( \mathcal{H}_1 (1^n) \big) - 1 \Big| \le \displaystyle\frac{1}{2^{n - 1}} + \mathfrak{E}_1 + \mathfrak{E}_2,
\end{flalign}

\noindent where
\begin{flalign*}
\mathfrak{E}_1 & = 2^{n - 1} n! \displaystyle\sum_{\mu \in \Xi} \displaystyle\frac{1}{\big( 2n - \ell (\mu) + 2)!} \displaystyle\prod_{i = 1}^{\ell (\mu)} (2 \mu_i - 3)!!  \mathfrak{z}(\mu_i); \\
\mathfrak{E}_2 & = 2^{n - 1} n! \displaystyle\sum_{r = 2}^{n / 2} \displaystyle\sum_{\mu \in \Omega (r)} \displaystyle\frac{1}{\big( 2n - r + 2)!} \displaystyle\prod_{i = 1}^r (2 \mu_i - 3)!!  \mathfrak{z}(\mu_i) .
\end{flalign*}

\noindent Let us first estimate $\mathfrak{E}_1$. Since $\ell \big( \xi^{(r)} \big) = r$ for each integer $r \ge 2$ and $\mathfrak{z} (k) \le 4$ for each $k \ge 2$ (in view of the last estimate in \eqref{2ll}), we find that
\begin{flalign}
\label{e1estimateprincipal}
\begin{aligned}
\mathfrak{E}_1 & \le 2^{n - 1} n! \displaystyle\sum_{r = 2}^{n / 2 + 1} \displaystyle\frac{2^{2r} (2n - 4r + 5)!!}{(2n - r + 2)!} \\
 & = 2^{n - 1} n! \displaystyle\sum_{r = 2}^{n / 2 + 1} \displaystyle\frac{2^{2r} (2n - 4r + 5)!}{2^{n - 2r + 2} (n - 2r + 2)! (2n - r + 2)!} \\
& = n! \displaystyle\sum_{s = 1}^{n / 2} \displaystyle\frac{2^{4s + 1} (2n - 4s + 1)!}{ (n - 2s)! (2n - s + 1)!} \\
& = \displaystyle\sum_{s = 1}^{n / 2} 2^{4s + 1} \displaystyle\prod_{i = 0}^{2 s - 1} \displaystyle\frac{n - i}{2n - 2s + 1 - i} \displaystyle\prod_{i = 0}^{s - 1} \displaystyle\frac{1}{2n - s + 1 - i} 	 \le 2 \displaystyle\sum_{s = 1}^{n / 2} \left( \displaystyle\frac{16}{n} \right)^s \le \displaystyle\frac{2^{10}}{n},
\end{aligned}
\end{flalign}

\noindent where we set $s = r - 1$ and used the facts that $\frac{n - i}{2n - 2 s + 1 - i} \le 1$ for each $i \in [0, 2s - 1]$ and that $2n - s + 1 - i \ge n$ for each $i \in [0, s - 1]$ and $s \le \frac{n}{2}$.

Next we bound $\mathfrak{E}_2$. To do this, let us apply \eqref{aiproduct1} with $A_i = \mu_i$, $k = r$, and $N = n + 2$ to deduce that $\max_{\mu \in \Omega (r)} \prod_{i = 1}^r (2 \mu_i - 3)!! \le 15 (2n - 4r + 1)!!$ (since $\mu_1 \ge \mu_2 \ge 4$). Combining this with the fact that $\mathfrak{z}(k) \le 4$ (which follows from the last estimate in \eqref{2ll}, as above) yields
\begin{flalign*}
\begin{aligned}
\mathfrak{E}_2 & \le 2^{n - 1} n! \displaystyle\sum_{r = 2}^{n / 2} \displaystyle\sum_{\mu \in \Omega (r)} \displaystyle\frac{2^{2r}}{\big( 2n - r + 2)!}  \displaystyle\prod_{i = 1}^r (2 \mu_i - 3)!! \le 2^{n + 3} n! \displaystyle\sum_{r = 2}^{n / 2}  \displaystyle\frac{2^{2r} \big| \Omega (r) \big| (2n - 4r + 1)!!}{\big( 2n - r + 2)!}.
\end{aligned}
\end{flalign*}

Since $\big| \Omega (r) \big| \le \big| \mathbb{Y}_{n / 2} (r) \big| \le \binom{n / 2}{r - 1} \le \frac{1}{(r - 1)!} \big( \frac{n}{2} \big)^{r - 1}$ (here, we recall from Section \ref{Partitions} that $\mathbb{Y}_n (k)$ denotes the number of partitions of size $n$ and length $k$, and we are using the first estimate in \eqref{ynkestimate}), it follows that
\begin{flalign*}
\mathfrak{E}_2 & \le 2^{n + 5} n! \displaystyle\sum_{r = 2}^{n / 2} \displaystyle\frac{(2n)^{r - 1} (2n - 4r + 1)!!}{(r - 1)! ( 2n - r + 2)!} \\
& = 2^{n + 5} n! \displaystyle\sum_{r = 2}^{n / 2} \displaystyle\frac{(2n)^{r - 1} (2n - 4r + 1)!}{2^{n - 2r} (r - 1)! (n - 2r)! ( 2n - r + 2)!} \\
& = 128 n! \displaystyle\sum_{r = 2}^{n / 2}  \displaystyle\frac{8^{r - 1}}{ (r - 1)!} \displaystyle\frac{n^{r - 1}}{(n - 2r)!} \displaystyle\prod_{j = 0}^{3r} \displaystyle\frac{1}{2n - r + 2 - j}.
\end{flalign*}

\noindent Therefore, since $\frac{n! n^{r - 1}}{(n - 2r)!} \le n^{-2} \prod_{j = 0}^{3r} (n + r + 1 - j)$ (since $n + r + 1 - j \ge n$ for $j \in [0, r]$), we obtain
\begin{flalign}
\label{e2estimateprincipal}
\begin{aligned}
\mathfrak{E}_2 & \le \displaystyle\frac{128}{n^2} \displaystyle\sum_{r = 2}^{n / 2}  \displaystyle\frac{8^{r - 1}}{ (r - 1)!} \displaystyle\prod_{j = 0}^{3r} \displaystyle\frac{n + r + 1 - j}{2n - r + 2 - j} \le \displaystyle\frac{128}{n^2} \displaystyle\sum_{r = 2}^{n / 2}  \displaystyle\frac{8^{r - 1}}{ (r - 1)!} \le  \displaystyle\frac{128 e^8}{n^2} \le \displaystyle\frac{2^{19}}{n^2}.
\end{aligned}
\end{flalign}

\noindent where we used the facts that $\frac{n + r + 1 - j}{2n - r + 2 - j} \le 1$ for each $r \le \frac{n}{2}$ and that $e \le 2^{3 / 2}$ to deduce the fourth and fifth inequalities, respectively. Now the proposition follows from \eqref{principal3}, \eqref{e1estimateprincipal}, \eqref{e2estimateprincipal}, and the first estimate in \eqref{2ll}.
\end{proof}

The method used to establish Proposition \ref{principalstratumasymptotic} will be used many times in the proof of Theorem \ref{volumeestimateasymptotic}. Upon encountering a large sum, such as the one that appears on the right side of \eqref{principal1}, we will sometimes remove a leading order term that should in principle dominate the sum.\footnote{In some cases this will not be done, if our goal is bound the sum instead of approximate it.} This is analogous to the removal of the $\mu = (n + 2)$ term used to establish \eqref{principal2} from \eqref{principal1}.

It then will then remain to estimate the error, which is still a sum with many summands. In some cases, we will remove a few ``exceptional summands'' from this sum, whose contribution can be estimated directly (in proof above, these were the $\xi^{(r)}$), and then partition the remaining summands according to a certain statistic. In the proof above, this statistic was the length of the partition (although it will not always be in the future), which led to the partition $\bigcup_{r = 2}^{n / 2} \Omega (r)$ of the ``non-exceptional summands.'' We then bound the sum over each part using the largest possible value of a summand, and then sum over all parts to estimate the error.

\begin{rem}
	
	Through	 a similar procedure as used in the proof of Proposition \ref{principalstratumasymptotic}, it is also possible to obtain the second order correction to $\nu_1 \big( \mathcal{H}_1 (1^{2g - 2}) \big)$, as in the asymptotic \eqref{12g2estimate} of Chen-M\"{o}ller-Zagier \cite{QLGL}. Although we will not pursue a complete proof here, it can be shown that the second order correction in the sum on the right side of \eqref{principal2} occurs at $\mu = \xi^{(2)} = (n, 2)$. In this case, $\ell (\mu) = 2$ and this correction becomes
	\begin{flalign*}
	- 2^{n - 1} n! \displaystyle\frac{(2n - 3)!! \mathfrak{z} (2) \mathfrak{z} (n)}{(2n)!} = - \displaystyle\frac{2^n n! (1 - 2^{1 - n}) (2n - 3)! \zeta (2) \zeta (n)}{2^{n - 2} (n - 2)! (2n)!} \sim - \displaystyle\frac{\zeta (2)}{2n} \sim - \displaystyle\frac{\pi^2}{24g}, 	
	\end{flalign*}
	
	\noindent where we used the fact that $\zeta (2) = \frac{\pi^2}{6}$ and $n = 2g - 2$. This matches the second order correction $- \frac{\pi^2}{24 g}$ appearing on the right side of \eqref{12g2estimate}.
\end{rem}

\subsection{The Eskin-Okounkov Algorithm}

\label{VolumeEvaluate}

In this section we explain the algorithm of \cite{ANBCTV} that evaluates the quantity $\textbf{c} (m)$ for any $m$ with $m_1, m_2, \ldots , m_n \ge 2$. Recall from \eqref{cnuh} that $2 \textbf{c} (m) = \nu_1 \big(\mathcal{H}_1 (m - 1) \big)$ and thus that any Masur-Veech volume can be directly expressed in terms of such a quantity. As mentioned in Section \ref{Outline}, the algorithm that determines $\textbf{c} (m)$ essentially proceeds through the composition of three identities.

We begin with a countably infinite set of indeterminates $\{ p_1, p_2, \ldots \}$ and consider the algebra $\Lambda = \mathbb{C} [ p_1, p_2, \ldots ]$ that they generate.\footnote{In \cite{ANBCTV}, the indeterminates $\{ p_i \}$ are \emph{shifted power sums}, and $\Lambda = \Lambda^*$ is the algebra of \emph{shifted symmetric functions}. However, these facts will not be necessary for us to state the algorithm.} Two of the three identities will define a multilinear form  $\langle \cdot \b| \ldots \b|  \cdot \rangle: \Lambda^n \rightarrow \mathbb{C}$, the first of which will define the form on the subset of $\Lambda$ given by the vector space spanned by $p_1, p_2, \ldots $.

In particular, we have the definition below, which essentially appears as Theorem 6.7 of \cite{ANBCTV}. In what follows, we recall the notions of reduced set partitions (as explained in Section \ref{Partitions}) and the definition \eqref{zk} of $\mathfrak{z}_k$.

\begin{definition}
	
	\label{innerp}
	
	For any reduced partition $\alpha \in \mathcal{P}_n$, let $\Delta (\alpha) = \mathcal{G}_{\ell (\alpha) - 2} \big( \ell (\alpha) \big)$ denote the set of $\ell (\alpha)$-tuples of nonnegative integers $(d_1, d_2, \ldots , d_{\ell (\alpha)})$ such that $\sum_{i = 1}^{\ell (\alpha)} d_i = \ell (\alpha) - 2$.

	For any sequence of $n$ positive integers $m = (m_1, m_2, \ldots , m_n) \in \mathbb{Z}_{\ge 1}^n$, define
	\begin{flalign}
	\label{singleinnermany}
	\langle m \rangle = \langle m_1 \b| m_2 \b| \cdots \b| m_n \rangle = \langle p_{m_1} \b| p_{m_2} \b| \cdots \b| p_{m_n} \rangle = |m|! \mathfrak{z} \big( |m| - n + 2 \big) + \mathcal{E} (m),
	\end{flalign}
	
	\noindent where $|m| = \sum_{i = 1}^n m_i$, and $\mathcal{E} (m) = \mathcal{E} (m_1, m_2, \ldots , m_n)$ is given by
	\begin{flalign}
	\label{singleinnermany2}
	\mathcal{E} (m) & =  \displaystyle\sum_{\substack{\alpha \in \mathcal{P}_n \\ \ell (\alpha) \ge 2}} (-1)^{\ell (\alpha) - 1} \big( \ell (\alpha) - 2 \big)!  \displaystyle\sum_{d \in \Delta (\alpha)} \displaystyle\prod_{i = 1}^{\ell (\alpha)} \displaystyle\frac{1}{d_i!}  \big| m_{\alpha^{(i)}} \big|! \mathfrak{z} \left( \big| m_{\alpha^{(i)}} \big| - \big| \alpha^{(i)} \big| - d_i + 1 \right).
	\end{flalign}
	
	\noindent  In \eqref{singleinnermany}, we have denoted $\alpha = \big( \alpha^{(1)} \cup \alpha^{(2)} \cup \cdots \cup \alpha^{(\ell (\alpha))} \big)$; $\big| \alpha^{(i)} \big|$ by the number of elements in the component $\alpha^{(i)}$; and $\big| m_{\alpha^{(i)}} \big| = \sum_{j \in \alpha^{(i)}} m_j$. Observe that each summand on the right side of \eqref{singleinnermany2} is well-defined since it does not depend on the representative of the equivalence class of $\alpha \in \mathcal{P}_n$.

\end{definition}

\begin{rem}
	
	In Theorem 6.7 of \cite{ANBCTV}, the $\ell (\alpha)$-tuple $d = (d_1, d_2, \ldots , d_{\ell (\alpha)})$ was not summed over $\Delta$, but instead only those elements of $\Delta$ such that $\big| m_{\alpha^{(i)}} \big| - \big| \alpha^{(i)} \big| - d_j + 1$ is even. Due to the definition \eqref{zk} of $\mathfrak{z}$, one quickly verifies that it is only these elements of $\Delta$ that contribute to the right side of \eqref{singleinnermany2}.
\end{rem}

\begin{rem}
	
	For any $\alpha \in \mathcal{P}_n$, observe from the second identity in \eqref{ynkestimate} that
	\begin{flalign}
	\label{deltaestimate}
	\big| \Delta (\alpha) \big| = \binom{2 \ell (\alpha) - 3}{\ell (\alpha) - 1} \le 2^{2 \ell (\alpha)}.
	\end{flalign}
	
\end{rem}

Now we must extend the inner product partly defined in Definition \ref{innerp} to all of $\Lambda^k$, which will be done through the second identity, given by definition below that essentially appears as Theorem 6.3 of \cite{ANBCTV} (under the name of a ``Wick-type identity''). In what follows we recall the notion of complementary set partitions explained in Definition \ref{partitionscomplement}, and we let $p_{\lambda} = \prod_{i = 1}^{\ell (\lambda)} p_{\lambda_i}$ for any partition $\lambda = (\lambda_1, \lambda_2, \ldots , \lambda_{\ell (\lambda)}) \in \mathbb{Y}$; observe that the $\{ p_{\lambda} \}_{\lambda \in \mathbb{Y}}$ generate $\Lambda$ and thus it suffices to define the inner product on any family of $p_{\lambda}$.

\begin{definition}

\label{innermanyp}

Fix partitions $\lambda^{(1)}, \lambda^{(2)}, \ldots , \lambda^{(n)} \in \mathbb{Y}$; set $L_j = \sum_{i = 1}^j \ell \big( \lambda^{(i)} \big)$ for each $j \in [1, n]$; and denote $L_0 = 0$ and $L = L_n$. Let $\rho = \big( \rho^{(1)}, \rho^{(2)},  \cdots , \rho^{(n)} \big) \in \mathcal{P}_{L; n}$ denote the reduced partition of $\{ 1, 2, \ldots , L \}$ such that $\rho^{(i)} = \big\{ L_{i - 1} + 1, L_{i - 1} + 2, \ldots , L_i \big\}$ for each $i \in [1, n]$. Define
\begin{flalign}
\label{manyinnermany}
\big\langle p_{\lambda^{(1)}} \b| p_{\lambda^{(2)}} \b| \cdots \b| p_{\lambda^{(n)}} \big\rangle = \displaystyle\sum_{\alpha \in \mathscr{C} (\rho)} \displaystyle\prod_{i = 1}^{L - n + 1} \big\langle \lambda_{\alpha^{(i)}} \big\rangle,
\end{flalign}

\noindent where the sum is over all reduced set partitions $\alpha = \big( \alpha^{(1)}, \alpha^{(2)}, \ldots , \alpha^{(L - n + 1)} \big) \in \mathcal{P}_n$ that are complementary to $\rho$, and $\lambda_{\alpha^{(i)}} \subset \mathbb{Z}_{\ge 1} $ is a set of $\big| \alpha^{(i)} \big|$ integers defined as follows. We stipulate that a positive integer $u$ is in $\lambda_{\alpha^{(i)}}$ if and only if there exist $j \in [1, n]$ and $k \in \big[ 1, \ell \big(\lambda^{(j)} \big) \big]$ such that $u = \lambda_k^{(j)}$ and $L_{j - 1} + k \in \alpha^{(i)}$. Observe that the product on the right side of \eqref{manyinnermany} does not depend on the representative of the equivalence class of $\alpha$. Now, using \eqref{manyinnermany}, extend the inner product $\langle \cdot \b| \ldots \b|  \cdot \rangle$ by linearity to all of $\Lambda^n$.
\end{definition}

For instance, if $n = 3$, $\rho = \big( \{ 1, 2, 3 \}, \{ 4 \}, \{ 5, 6 \} \big)$, and $\alpha = \big( \{ 1, 4 \}, \{ 2, 6 \}, \{ 3 \}, \{ 5 \} \big)$, then $(L_0, L_1, L_2, L_3) = (0, 3, 4, 6)$ and
\begin{flalign*}
\lambda_{\alpha^{(1)}} = \big( \lambda_1^{(1)}, \lambda_1^{(2)} \big); \qquad \lambda_{\alpha^{(2)}} = \big( \lambda_2^{(1)}, \lambda_2^{(3)} \big); \qquad \lambda_{\alpha^{(3)}} = \big( \lambda_3^{(1)} \big); \qquad \lambda_{\alpha^{(4)}} = \big( \lambda_1^{(3)} \big).
\end{flalign*}

\noindent The corresponding summand in \eqref{manyinnermany} is then $\big\langle \lambda_1^{(1)} \b| \lambda_1^{(2)} \big\rangle  \big\langle \lambda_2^{(1)} \b| \lambda_2^{(3)} \big\rangle \big\langle \lambda_3^{(1)} \big\rangle \big\langle \lambda_1^{(3)} \big\rangle$.

The quantities $\textbf{c} (m)$ will not be directly expressed in terms of inner products of the $p_{\lambda}$, but rather in terms of inner products of a different family of elements of $\mathfrak{f}_k \in \Lambda$. The third identity, which appears as Theorem 5.5 of \cite{ANBCTV}, expresses these $\mathfrak{f}_k$ in the $\{ p_{\lambda} \}$ basis. In what follows, we recall the notion of the weight of a partition from Definition \ref{partitionweight}.

\begin{definition}
	
	\label{fdefinition}

For any integer $k \ge 2$, define the function $\mathfrak{f}_k$ through\footnote{In \cite{ANBCTV}, the functions $\mathfrak{f}_k$ denote the highest weight part in the expansion of certain (normalized) characters of the symmetric group in the shifted power sum basis $\{ p_{\lambda} \}$. However, this fact is again not required to state the algorithm.}
\begin{flalign}
\label{kf}
\mathfrak{f}_k = \displaystyle\frac{1}{k} \displaystyle\sum_{\wt (\lambda) = k + 1} \displaystyle\frac{(-k)^{\ell (\lambda) - 1}}{\prod_{i = 1}^{\infty} M_i (\lambda) !} p_{\lambda}.
\end{flalign}

\end{definition}

Using the above definitions, we can express $\textbf{c} (m)$ as an inner product through the following proposition, which follows from combining equation (1.8), Theorem 5.5, Definition 6.1, Theorem 6.3, and Theorem 6.7 of \cite{ANBCTV}.

\begin{prop}[{\cite{ANBCTV}}]
	
	\label{cmuinner}
	
	Let $m = (m_1, m_2, \ldots , m_n) \in \mathbb{Y}$ be a partition with $m_1, m_2, \ldots , m_n \ge 2$. Then,
	\begin{flalign*}
	\textbf{\emph{c}} (m) = \displaystyle\frac{1}{|m|!} \big\langle \mathfrak{f}_{m_1} \b| \mathfrak{f}_{m_2} \b| \cdots \b| \mathfrak{f}_{m_n} \big\rangle.
	\end{flalign*}
	
\end{prop}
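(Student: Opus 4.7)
The proof is a matter of assembling the five results from \cite{ANBCTV} cited in the statement in the appropriate order; since Definitions \ref{innerp}, \ref{innermanyp}, and \ref{fdefinition} above have been formulated to match the constructions there, no new analytic work is required beyond bookkeeping of conventions and normalizations.

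The plan is as follows. First, I would invoke equation (1.8) of \cite{ANBCTV}, which expresses $|m|! \cdot \textbf{c}(m)$ as a pairing of the normalized characters $f_{m_1}, f_{m_2}, \ldots , f_{m_n}$ (viewed as elements of the algebra of shifted symmetric functions) against a certain functional arising from asymptotic Hurwitz theory. Second, Theorem 5.5 of \cite{ANBCTV} asserts that each $f_{m_i}$ admits a filtration by weight whose highest-weight component is exactly $\mathfrak{f}_{m_i}$ as given by \eqref{kf}; because the asymptotic pairing in question selects only the top-weight contribution (which is where the hypothesis $m_i \ge 2$ enters the argument), one may replace each $f_{m_i}$ by $\mathfrak{f}_{m_i}$ inside this pairing.

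At this stage the identity reads $|m|! \cdot \textbf{c}(m) = \big\langle \mathfrak{f}_{m_1} \b| \mathfrak{f}_{m_2} \b| \cdots \b| \mathfrak{f}_{m_n} \big\rangle$ for the inner product of Definition 6.1 of \cite{ANBCTV}. To evaluate it, expand each $\mathfrak{f}_{m_i}$ in the basis $\{ p_\lambda \}$ via \eqref{kf} and use multilinearity of the pairing, thereby reducing the problem to evaluating $\big\langle p_{\lambda^{(1)}} \b| \cdots \b| p_{\lambda^{(n)}} \big\rangle$. These values are supplied by Theorem 6.3 of \cite{ANBCTV} (the Wick-type identity), which gives the sum over complementary set partitions of \eqref{manyinnermany} in Definition \ref{innermanyp}. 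The individual factors $\big\langle \lambda_{\alpha^{(i)}} \big\rangle$ appearing there are then evaluated by Theorem 6.7 of \cite{ANBCTV}, producing \eqref{singleinnermany} and \eqref{singleinnermany2} of Definition \ref{innerp}. Dividing by $|m|!$ yields the proposition.

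The main obstacle, such as it is, will be verifying that the sign conventions, factorial prefactors, and the precise shifted power sum basis recorded in Definitions \ref{innerp}, \ref{innermanyp}, and \ref{fdefinition} here faithfully transcribe those of \cite{ANBCTV}, and that the external factor $|m|!^{-1}$ is consistent with the normalization $2\textbf{c}(m) = \nu_1 \big( \mathcal{H}_1(m-1) \big)$ recorded in \eqref{cnuh}. A secondary subtlety is tracking why contributions from lower-weight components in the Theorem 5.5 expansion genuinely drop out; this relies on the Plancherel-measure concentration results underlying \cite{ANBCTV} but, for the present proposition, no independent verification is needed since it is built into the cited theorems.
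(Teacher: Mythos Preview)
Your proposal is correct and matches the paper's approach exactly: the paper does not supply its own proof of this proposition but simply states that it ``follows from combining equation (1.8), Theorem 5.5, Definition 6.1, Theorem 6.3, and Theorem 6.7 of \cite{ANBCTV},'' which is precisely the assembly you have outlined. Your write-up is in fact more detailed than what appears in the paper, but the content is the same deferral to the cited results of Eskin--Okounkov.
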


The goal of the remainder of this article is to establish the following theorem, which in view of \eqref{cnuh} implies Theorem \ref{volumeestimateasymptotic}.

\begin{thm}

\label{volume}

Let $g > 1$ be an integer and $m = (m_1, m_2, \ldots , m_n) \in \mathbb{Y}_{2g + n - 2}$ be a partition such that $m_i \ge 2$ for each $i \in [1, n]$. If we denote $\mathcal{F}_k = k \mathfrak{f}_k$ for each $k \ge 2$, then
\begin{flalign}
\label{fvolume}
\Big| \big\langle \mathcal{F}_{m_1} \b| \mathcal{F}_{m_2} \b| \cdots \b| \mathcal{F}_{m_n} \big\rangle - 2 |m|! \Big| \le 2^{2^{200}} \big( |m| - 1 \big)!.
\end{flalign}

\noindent In particular, since $|m| = 2g + n - 2 \ge g$, we have that
\begin{flalign*}
\left|  \textbf{\emph{c}} (m) \prod_{i = 1}^n m_i  - 2 \right| \le \displaystyle\frac{2^{2^{200}}}{g}.
\end{flalign*}

\end{thm}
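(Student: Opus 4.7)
The plan is to unpack the multilinear form on the left of \eqref{fvolume} by composing the three identities of Section \ref{VolumeEvaluate}. Expanding each $\mathcal{F}_{m_i} = m_i \mathfrak{f}_{m_i}$ in the $\{p_\lambda\}$-basis via \eqref{kf} gives
\begin{flalign*}
\big\langle \mathcal{F}_{m_1} \b| \cdots \b| \mathcal{F}_{m_n} \big\rangle = \sum_{\substack{\lambda^{(1)}, \ldots, \lambda^{(n)} \in \mathbb{Y} \\ \wt(\lambda^{(i)}) = m_i + 1}} \prod_{i=1}^n \frac{(-m_i)^{\ell(\lambda^{(i)}) - 1}}{\prod_{j \ge 1} M_j(\lambda^{(i)})!} \big\langle p_{\lambda^{(1)}} \b| \cdots \b| p_{\lambda^{(n)}} \big\rangle,
\end{flalign*}
after which the Wick-type formula \eqref{manyinnermany} expresses each $\big\langle p_{\lambda^{(1)}} \b| \cdots \b| p_{\lambda^{(n)}} \big\rangle$ as a further sum over complementary set partitions $\alpha \in \mathscr{C}(\rho)$, and \eqref{singleinnermany}--\eqref{singleinnermany2} evaluate each factor $\langle \lambda_{\alpha^{(j)}} \rangle$. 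The strategy is to isolate a single configuration yielding the $2|m|!$ leading term and to bound the total of all remaining contributions by $O\big((|m|-1)!\big)$.

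The leading configuration is $\lambda^{(i)} = (m_i)$ for every $i$: the expansion coefficient is $1$ (since $\ell(\lambda^{(i)}) = 1$), the set partition $\rho$ is the finest one, and its unique complement is $\alpha = \big(\{1, \ldots, n\}\big)$. This contributes $\langle m_1 \b| \cdots \b| m_n \rangle = |m|!\,\mathfrak{z}(|m| - n + 2) + \mathcal{E}(m)$. Since $|m| - n + 2 = 2g$ and $\mathfrak{z}(2g) = 2(1 - 2^{1-2g})\zeta(2g)$, the last estimate of \eqref{2ll} yields $|m|!\,\mathfrak{z}(2g) = 2|m|! + O\big(|m|!\,2^{-g}\big)$. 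The remaining term $\mathcal{E}(m)$ is bounded by splitting the sum \eqref{singleinnermany2} according to $r = \ell(\alpha) \ge 2$: for each $r$, the product $\prod_{i=1}^r |m_{\alpha^{(i)}}|!$ is dominated by $(|m|-1)!$ via \eqref{aici1} of Lemma \ref{aicisum}, and the combinatorial sums over $\alpha \in \mathcal{P}_n$ and $d \in \Delta(\alpha)$ are controlled by \eqref{deltaestimate} and standard set-partition counts, giving $|\mathcal{E}(m)| = O\big((|m|-1)!\big)$.

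The bulk of the argument bounds the contribution from tuples in which at least one $\lambda^{(i)}$ has length $\ge 2$. I would stratify this residual sum by the tuple of lengths $\ell(\lambda^{(i)})$ and by the shape of $\alpha \in \mathscr{C}(\rho)$; Lemma \ref{transverse} forces each block $\alpha^{(j)}$ to hit every component of $\rho$ at most once, which keeps block sizes under control. Each factor $\langle \lambda_{\alpha^{(j)}} \rangle$ is then bounded in the same leading-plus-error shape. The numerator powers $(-m_i)^{\ell(\lambda^{(i)}) - 1}$ are converted into factorial ratios via Lemma \ref{krnestimate}, while the denominators $\prod M_j(\lambda^{(i)})!$ together with repeated applications of Lemma \ref{aicisum} collapse products of factorials into a single factorial of size at most $|m| - 1$. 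The resulting sums of products of factorials over compositions $A \in \mathcal{C}_L(\cdot)$ and $B \in \mathcal{G}_b(\cdot)$ are precisely what Lemma \ref{hestimate} is designed to handle, and Lemma \ref{sumsb} controls the power-type sums arising from $d \in \Delta(\alpha)$.

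The main obstacle is quantitative bookkeeping. The triple sum has super-exponentially many summands, so no uniform per-summand bound suffices; the argument must show that each unit of deviation from the leading configuration --- an extra part in some $\lambda^{(i)}$, an extra block in $\alpha$, or a nonzero $d_i$ --- costs a factorial-ratio factor of order $1/|m|$ while gaining only polynomial combinatorial multiplicity, so that summing the resulting geometric series over strata still yields $O\big((|m|-1)!\big)$. This is exactly the quantitative content encoded in the estimates of Section \ref{Estimates1}. Once \eqref{fvolume} is established, the second claim follows immediately: by Proposition \ref{cmuinner} and $\mathcal{F}_k = k \mathfrak{f}_k$, multilinearity gives $\textbf{c}(m) \prod_i m_i = \frac{1}{|m|!} \big\langle \mathcal{F}_{m_1} \b| \cdots \b| \mathcal{F}_{m_n} \big\rangle$, so dividing \eqref{fvolume} by $|m|!$ and using $|m| \ge g$ yields $\big|\textbf{c}(m) \prod_i m_i - 2\big| \le 2^{2^{200}}/|m| \le 2^{2^{200}}/g$.
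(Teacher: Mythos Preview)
Your outline matches the paper's strategy: expand via \eqref{kf}, isolate the all-one-part term $\langle m \rangle$, bound $\mathcal{E}(m)$ as in Lemma \ref{mestimatelarge}, and control the remainder using the Wick expansion together with the estimates of Section \ref{Estimates1}. The deduction of the second claim from \eqref{fvolume} is also exactly as in the paper.

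However, you gloss over the one place where the bookkeeping genuinely requires an extra idea. When at least one $\lambda^{(i)}$ has length $\ge 2$, the paper does \emph{not} just stratify by the lengths $\ell(\lambda^{(i)})$; it further splits the residual sum into an ``exceptional'' piece $\mathfrak{E}_1$, where every $\lambda^{(i)}$ has at most one part $\ge 2$ (so $\lambda^{(i)}=\xi^{(m_i,l_i)}=(m_i-2l_i+2,1^{l_i-1})$), and a ``non-exceptional'' piece $\mathfrak{E}_2$, where some $\lambda^{(j)}$ has at least two parts $\ge 2$. This split is not cosmetic. The generic bound on the Wick inner product (Proposition \ref{innermanyestimate}) yields at best $C^{r-n}\big(|m|-2(r-n)\big)!$, while the combinatorial multiplicity of non-exceptional partition tuples in $\mathfrak{E}_2$ is of order $|m|^{2(r-n)}$ rather than $|m|^{r-n}$; the product is then $O(|m|!)$, not $O\big((|m|-1)!\big)$, and the argument would not close. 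The remedy is the refined bound of Proposition \ref{innermanyestimate2}: when some $\lambda^{(j)}$ has two parts $\ge 2$, transversality (Lemma \ref{transverse}) forces those two parts into distinct blocks of $\alpha$, and the second estimate \eqref{gabl} of Lemma \ref{hestimate} (carrying the extra factor $A_{\mathfrak h}+2B_{\mathfrak h}+1$) upgrades the factorial to $\big(|m|-2(r-n)-1\big)!$, restoring $O\big((|m|-1)!\big)$ after Lemma \ref{krnestimate}. Lemma \ref{sumsb} is then used for the $l$-composition sum $\sum_{l}\prod_j m_j^{2l_j-2}/(2l_j-2)!$ in the $\mathfrak{E}_2$ estimate, not for the sums over $d\in\Delta(\alpha)$ as you suggest (those are handled crudely by the cardinality bound \eqref{deltaestimate}).

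A smaller point you also skate over: the sequences $\lambda_{\alpha^{(j)}}$ appearing inside the Wick factors can contain parts equal to $1$, so ``the same leading-plus-error shape'' is not Lemma \ref{mestimatelarge} but the weaker Proposition \ref{mestimate}, giving $|\mathcal{E}|\le 2^{78k}|m|!$ with $k$ the number of ones present. This is precisely the source of the constant $2^{79L}$ in \eqref{1manyinnerestimate}.
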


\subsection{Outline of the Proof of Theorem \ref{volume}}

\label{VolumeEstimateInitial}

Let us briefly indicate why one might expect the estimate \eqref{fvolume} to hold.
	
First observe, using the fact that $\mathcal{F}_k = k \mathfrak{f}_k$ and the definition \eqref{kf} of $\mathfrak{f}_k$, that the inner product $\big\langle \mathcal{F}_{m_1} \b| \mathcal{F}_{m_2} \b| \cdots \b| \mathcal{F}_{m_n} \big\rangle$ can be expressed as a linear combination of terms of the form $\big\langle p_{\lambda^{(1)}} \b| p_{\lambda^{(2)}} \b| \cdots p_{\lambda^{(n)}} \big\rangle$. One of these terms is $\big\langle p_{m_1} \b| p_{m_2} \b| \cdots \b| p_{m_n} \big\rangle$, which occurs when $\lambda^{(i)} = (m_i)$ for each $i$; it is quickly verified that this is the term corresponding to the maximal value of the total size $\sum_{i = 1}^n \big| \lambda^{(i)} \big|$.

We will establish that this term in fact dominates $\big\langle \mathcal{F}_{m_1} \b| \mathcal{F}_{m_2} \b| \cdots \b| \mathcal{F}_{m_n} \big\rangle$, that is,
\begin{flalign*}
\big\langle \mathcal{F}_{m_1} \b| \mathcal{F}_{m_2} \b| \cdots \b| \mathcal{F}_{m_n} \big\rangle \approx \big\langle p_{m_1} \b| p_{m_2} \b| \cdots \b| p_{m_n} \big\rangle = \langle m \rangle.
\end{flalign*}

 To analyze the latter expression, recall from \eqref{singleinnermany} that $\langle m \rangle = |m|! \mathfrak{z} \big( |m| - n + 2 \big) + \mathcal{E} (m)$, where $\mathcal{E}$ is defined by \eqref{singleinnermany2}. We will show that, if $m$ does not contain any parts equal to one (which is the case in the setting of Theorem \ref{volume}), then $\mathcal{E} (m)$ is of smaller order than $|m|!$. Therefore, $\langle m \rangle \approx |m|! \mathfrak{z} \big( |m| - n + 2 \big)$; since $\mathfrak{z} (k) = ( 2 - 2^{2 - k} ) \zeta (k) \textbf{1}_{k \in 2 \mathbb{Z}_{\ge 0}} \approx 2$ for $k$ large and even, this would show that $\big\langle \mathcal{F}_{m_1} \b| \mathcal{F}_{m_2} \b| \cdots \b| \mathcal{F}_{m_n} \big\rangle \approx \langle m \rangle \approx 2 |m|!$, as in Theorem \ref{volume}.\footnote{Observe that this heuristic does not use the multi-fold inner product given by \eqref{manyinnermany} (in the generic case when at least one of the $\lambda^{(i)}$ there has at least two parts). Indeed, this will be due to the fact that the sum of these terms will not contribute in the large $|m|$ limit.}

To fully justify this procedure will require some additional bounds. Specifically, we will begin in Section \ref{EstimateM} by estimating the inner products $\langle m \rangle$ for partitions $m = (m_1, m_2, \ldots , m_n)$. If each $m_i \ge 2$, then Lemma \ref{mestimatelarge} will verify the above statement that $\mathcal{E} (m) = \mathcal{O} \big( (|m| - 1)! \big)$. However, this will not quite suffice for our purposes. Indeed, although the partition $m$ in the statement of Theorem \ref{volume} has all parts at least two, it is possible that when we use \eqref{kf} to express $\mathcal{F}$ as a linear combination of the $p_{\lambda}$ that some of these $p_{\lambda}$ will have some parts equal to one.

Therefore, we will still be required to bound $\langle m \rangle$ in the case when some parts of $m$ are equal to one. In this case, we are in fact not certain if $\mathcal{E} (m) = \mathcal{O} \big( (|m| - 1)! \big)$ holds, but we will establish a weaker bound for this quantity as Proposition \ref{mestimate}, which will suffice for our purposes.

Next, we must bound the more general inner product given by \eqref{manyinnermany}. To gain an initial idea for how these bounds should look, one might first attempt to understand the contribution of any one summand to the sum on the right side of \eqref{manyinnermany}. For simplicity, let us suppose as above that the ideal approximation $\langle m_{\alpha^{(i)}} \rangle \sim 2 \big| m_{\alpha^{(i)}} \big|!$ holds. In this case, each summand on the right side of \eqref{manyinnermany} becomes approximately $2^{L - n + 1} \prod_{i = 1}^{L - n + 1} \big| m_{\alpha^{(i)}} \big|!$, which can be shown to be bounded by $2^{L - n + 1} \big( |m| - L + n \big)!$.

This heuristic holds for any individual summand in \eqref{manyinnermany}. However, if the terms defining the sum on the right side of \eqref{manyinnermany} decay sufficiently quickly, then one might expect it to in fact be possible to estimate the inner product on the left side of \eqref{manyinnermany} by $C^{L - n + 1} \big( |m| - L + n \big)!$ for some constant $C$. We will be able to establish such an estimate through a more careful analysis, as we will see in Proposition \ref{innermanyestimate} and its refinement Proposition \ref{innermanyestimate2} below.

Once the multi-fold inner products $\big\langle p_{\lambda^{(1)}} \b| p_{\lambda^{(2)}} \b| \cdots \b| p_{\lambda^{(n)}} \big\rangle$ have been appropriately estimated, we will be able to justify the approximation $\big\langle \mathcal{F}_{m_1} \b| \mathcal{F}_{m_2} \b| \cdots \b| \mathcal{F}_{m_n} \big\rangle \approx \big\langle p_{m_1} \b| p_{m_2} \b| \cdots \b| p_{m_n} \big\rangle $ and conclude the proof of Theorem \ref{volume} in Section \ref{EstimateC}.

\section{Estimating \texorpdfstring{$\langle m \rangle$}{}}

\label{EstimateM}

In this section we estimate $\mathcal{E} (m)$ as $|m|$ tends to $\infty$. Specifically, in Section \ref{EstimateM1} we bound this quantity in the case when $m$ has no parts equal to one, and in Section \ref{EstimateM2} we establish a weaker bound for this quantity when some parts of $m$ equal one.

\subsection{The Case When Each \texorpdfstring{$m_i \ge 2$}{}}	

\label{EstimateM1}

Our goal in this section is to establish the following lemma, which estimates $\mathcal{E} (m)$ when each part of $m$ is at least two.
	
\begin{lem}

\label{mestimatelarge}

Let $n$ be a positive integer, and let $m = (m_1, m_2, \ldots , m_n)$ be an $n$-tuple of integers with each $m_i \ge 2$. Denoting $|m| = \sum_{i = 1}^n m_i$, we have that $\big| \mathcal{E} (m) \big| \le 2^{40} \big( |m| - 1 \big)!$.

\end{lem}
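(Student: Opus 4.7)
My plan is to bound the triple sum defining $\mathcal{E}(m)$ in \eqref{singleinnermany2} layer by layer. First, from the last estimate in \eqref{2ll}, one has $|\mathfrak{z}(k)| \le 2$ uniformly (and $\mathfrak{z}(k) = 0$ for $k$ odd or negative). For fixed $\alpha \in \mathcal{P}_n$ with $r := \ell(\alpha) \ge 2$, the inner sum evaluates via the multinomial theorem:
\begin{flalign*}
\sum_{d \in \mathcal{G}_{r-2}(r)} \prod_{i=1}^{r} \frac{1}{d_i!} = \frac{r^{r-2}}{(r-2)!}.
\end{flalign*}
Combining these with the prefactor $(r-2)!$ reduces the problem to controlling
\begin{flalign*}
|\mathcal{E}(m)| \le \sum_{r=2}^{n} 2^r r^{r-2} \sum_{\alpha \in \mathcal{P}_{n;r}} \prod_{i=1}^{r} B_i!, \qquad B_i := \big| m_{\alpha^{(i)}} \big|.
\end{flalign*}

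Next, I would use that every $m_j \ge 2$ forces $B_i \ge 2$ for each component, and prove by induction on $r$ --- peeling off a block with $B_i = 2$ if one exists, otherwise rearranging a unit of mass from the smallest to the largest $B_i$ (which increases the product) --- the uniform estimate $\prod_{i=1}^{r} B_i! \le 2^{r-1}(|m| - 2(r-1))!$, with equality when one block has weight $|m| - 2(r-1)$ and the rest have weight $2$. This relies on \eqref{aici1} of Lemma \ref{aicisum} in the peeling step.

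The crux is then to sum over $\alpha \in \mathcal{P}_{n;r}$. A naive combination using $|\mathcal{P}_{n;r}| = S(n,r)$ with the uniform product estimate fails, because $r^{r-2} \cdot S(n,r)$ dwarfs the decay $(|m|-2(r-1))!/(|m|-1)!$ for moderate $r$. I would instead partition $\mathcal{P}_{n;r}$ according to the weight of its heaviest block (and the underlying profile of block sizes). When this heaviest weight is close to its maximum value $|m|-2(r-1)$, only a few $\alpha$ realize that pattern since the heavy block essentially pins the partition, and the resulting contribution is of order $(|m|-1)!$; when the heaviest weight is smaller, $\prod B_i!$ drops geometrically in $|m|$, and this decay more than compensates for the Stirling-type growth in the number of set partitions. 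As in the proof of Proposition \ref{principalstratumasymptotic}, one then removes the leading profile, isolates a small number of exceptional profiles handled by direct bounds, and controls the remainder by a geometric series in $r$, producing the stated constant $2^{40}$. The main obstacle lies precisely in this third step: neither the product bound nor the count of set partitions can be used uniformly, and one must exploit the joint distribution of the $B_i$ arising from $m$ to trade the Stirling growth for the factorial decay.
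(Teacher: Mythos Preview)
Your proposal is incomplete rather than wrong: the reduction through the multinomial identity $\sum_{d}\prod 1/d_i!=r^{r-2}/(r-2)!$ and the product bound $\prod B_i!\le 2^{r-1}(|m|-2(r-1))!$ are both fine, and you correctly identify that combining the latter with the raw count $|\mathcal{P}_{n;r}|$ fails. But your remedy---stratifying by the weight of the heaviest block---remains a heuristic, and you yourself flag this third step as the unresolved obstacle. Stratifying by heaviest \emph{weight} does not interact cleanly with any explicit count of set partitions, so it is unclear your outline closes without substantial further work.

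The paper's proof sidesteps this obstacle by exploiting a constraint you underuse: since each $m_j\ge 2$, one has $B_i\ge 2\ell_i$ where $\ell_i=|\alpha^{(i)}|$ is the block \emph{size}, not merely $B_i\ge 2$. The paper passes to nonreduced partitions, stratifies over block-size compositions $\ell=(\ell_1,\dots,\ell_r)\in\mathcal{C}_n(r)$, and applies \eqref{aici1} with $C_i=2\ell_i$ to get
\[
\prod_i B_i!\;\le\;(|m|-2n+2\ell_{\mathfrak{s}})!\prod_{i\ne\mathfrak{s}}(2\ell_i)!.
\]
The key is that the factors $(2\ell_i)!$ now mesh with the multinomial count $|\mathfrak{P}(\ell)|=n!/\prod\ell_i!$ via \eqref{aijai}, namely $\prod_i\binom{2\ell_i}{\ell_i}\le\binom{2n}{n}$; after using $|m|\ge 2n$ to compare $(|m|-2n+2\ell_{\mathfrak{s}})!$ against $(|m|-1)!$, this cancellation leaves only $\sum_{\ell\in\mathcal{C}_n(r)}\prod_i\ell_i!/n!$, which is handled by a short exceptional/generic split using \eqref{2aici}. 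This structural coupling between block sizes and block weights---absent from your sketch---is precisely what makes the Stirling-type growth disappear without any delicate tradeoff.
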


\begin{proof}

Observe by the definition \eqref{singleinnermany2} of $\mathcal{E} (m)$, we have that
	\begin{flalign*}
	\big| \mathcal{E} (m) \big| & \le   \displaystyle\sum_{\substack{\alpha \in \mathcal{P}_n \\ \ell (\alpha) \ge 2}} \big( \ell (\alpha) - 2 \big)!  \displaystyle\sum_{d \in \Delta (\alpha)} \displaystyle\prod_{j = 1}^{\ell (\alpha)}  \big| m_{\alpha^{(j)}} \big|! \mathfrak{z} \left( \big| m_{\alpha^{(j)}} \big| - \big| \alpha^{(j)} \big| - d_j + 1 \right).
	\end{flalign*}
	
	\noindent Applying the fact that $\big| \mathfrak{z} (k) \big| \le 4$ and \eqref{deltaestimate}, we deduce that
	\begin{flalign*}
	\big| \mathcal{E} (m) \big| & \le   \displaystyle\sum_{\substack{\alpha \in \mathcal{P}_n \\ \ell (\alpha) \ge 2}} 2^{4 \ell (\alpha)} \big( \ell (\alpha) - 2 \big)! \displaystyle\prod_{j = 1}^{\ell (\alpha)}  \big| m_{\alpha^{(j)}} \big|!.
	\end{flalign*}
	
	\noindent Setting $\ell (\alpha) = r$ and applying the first and third identities in \eqref{aipksize} yields
	\begin{flalign}
	\label{emestimate1}
	\begin{aligned}
	\big| \mathcal{E} (m) \big| \le  \displaystyle\sum_{r = 2}^n \displaystyle\sum_{\alpha \in \mathcal{P}_{n; r}} 2^{4r} \big( r - 2 \big)! \displaystyle\prod_{j = 1}^r  \big| m_{\alpha^{(j)}} \big|! & =  \displaystyle\sum_{r = 2}^n \displaystyle\frac{2^{4r}}{r (r - 1)} \displaystyle\sum_{\alpha \in \mathfrak{P}_{n; r}} \displaystyle\prod_{j = 1}^{r}  \big| m_{\alpha^{(j)}} \big|! \\
	& = \displaystyle\sum_{r = 2}^n \displaystyle\frac{2^{4r}}{r (r - 1)} \displaystyle\sum_{\ell \in \mathcal{C}_n (r)} \displaystyle\sum_{\alpha \in \mathfrak{P} (\ell)} \displaystyle\prod_{j = 1}^{r}  \big| m_{\alpha^{(j)}} \big|!,
	\end{aligned}
	\end{flalign}
	
	 Now, for any composition $\ell \in \mathcal{C}_n (r)$, let $\mathfrak{s} = \mathfrak{s} (\ell)$ denote the minimal index $\mathfrak{s} \in [1, r]$ such that $\ell_{\mathfrak{s}}	 = \max_{1 \le j \le r} \ell_j$. Then, since $\sum_{i = 1}^r \big| m_{\alpha^{(i)}} \big| = |m|$; $\sum_{i = 1}^r \ell_i = n$; and $\big| m_{\alpha^{(i)}} \big| \ge 2 \ell_i$ (since $m_i \ge 2$ for each $i \in [1, r]$), \eqref{aici1} applied with $C_i = 2 \ell_i$ and $A_i = \big| m_{\alpha^{(i)}} \big| - 2 \ell_i$ yields
	\begin{flalign}
	\label{malphajproduct}
	\displaystyle\max_{\alpha \in \mathfrak{P} (\ell)} \displaystyle\prod_{j = 1}^{r}  \big| m_{\alpha^{(j)}} \big|! \le \big( |m| - 2n + 2 \ell_{\mathfrak{s}} \big)! \displaystyle\prod_{\substack{1 \le i \le r \\ i \ne \mathfrak{s}}} (2 \ell_i)!.
	\end{flalign}
	
	\noindent Applying the second identity in \eqref{aipksize} and \eqref{malphajproduct} in \eqref{emestimate1} yields
	\begin{flalign}
	\label{eestimate2}
	\begin{aligned}
	\big| \mathcal{E} (m) \big| & \quad \le \displaystyle\sum_{r = 2}^n \displaystyle\frac{2^{4r}}{r (r - 1)} \displaystyle\sum_{\ell \in \mathcal{C}_n (r)} \binom{n}{\ell_1, \ell_2, \ldots , \ell_r}  \big( |m| - 2n + 2 \ell_{\mathfrak{s}} \big)! \displaystyle\prod_{\substack{1 \le i \le r \\ i \ne \mathfrak{s}}} (2 \ell_i)! \\
	& \quad = \big| m \big|! \displaystyle\sum_{r = 2}^n \displaystyle\frac{2^{4 r}}{r (r - 1)} \displaystyle\sum_{\ell \in \mathcal{C}_n (r)} \displaystyle\frac{n!}{\ell_{\mathfrak{s}}!} \displaystyle\prod_{i = 0}^{2n - 2 \ell_{\mathfrak{s}} - 1} \displaystyle\frac{1}{|m| - i}  \displaystyle\prod_{\substack{1 \le i \le r \\ i \ne \mathfrak{s}}} \displaystyle\frac{(2 \ell_i)!}{\ell_i!}   \\
	& \quad \le 2n \big( |m| - 1 \big)! \displaystyle\sum_{r = 2}^n \displaystyle\frac{2^{4 r}}{r (r - 1)} \displaystyle\sum_{\ell \in \mathcal{C}_n (r)} \displaystyle\frac{n! \prod_{j = 1}^r \ell_i! }{(2n)!}  \displaystyle\prod_{i = 1}^r \binom{2 \ell_i}{\ell_i},
	\end{aligned}
	\end{flalign}
	
	\noindent where in the last estimate we used the fact that
	\begin{flalign*}
	\displaystyle\prod_{i = 0}^{2n - 2 \ell_{\mathfrak{s}} - 1} \displaystyle\frac{1}{|m| - i} \le \displaystyle\frac{2 n}{|m|} \displaystyle\prod_{i = 0}^{2n - 2 \ell_{\mathfrak{s}} - 1} \displaystyle\frac{1}{2n - i} = \displaystyle\frac{2n (2 \ell_{\mathfrak{s}})!}{m (2n)!}, \quad \text{since $|m| \ge 2n$.}
	\end{flalign*}
	
	\noindent Since $\prod_{i = 1}^r \binom{2 \ell_i}{\ell_i} \le \binom{2n}{n}$ (due to \eqref{aijai}), we deduce from \eqref{eestimate2} that
	\begin{flalign}
	\label{eestimatesum2}
	\begin{aligned}
	\big| \mathcal{E} (m) \big| & \le 2n \big( |m| - 1 \big)! \displaystyle\sum_{r = 2}^n 2^{4r} \displaystyle\sum_{\ell \in \mathcal{C}_n (r)} \displaystyle\frac{1}{n!} \displaystyle\prod_{j = 1}^r \ell_i!  \\
	& \le 2 n \big( |m| - 1 \big)! \displaystyle\sum_{r = 2}^n \displaystyle\frac{2^{4r} r (n - r + 1)!}{n!} + 2n \big( |m| - 1 \big)! \displaystyle\sum_{r = 2}^n 2^{4r}  \displaystyle\sum_{\substack{\ell \in \mathcal{C}_n (r) \\ \max_{i \ne \mathfrak{s}} \ell_i \ge 2}} \displaystyle\frac{\prod_{j = 1}^r \ell_i! }{n!},
	\end{aligned}
	\end{flalign}
	
	\noindent where the first sum on the right side of \eqref{eestimatesum2} corresponds to ``exceptional'' compositions $\ell \in \mathcal{C}_n (r)$ with one part equal to $n - r + 1$ and the remaining $r - 1$ parts equal to one, and the second sum corresponds to the remaining compositions (which must satisfy $\max_{i \ne \mathfrak{s}} \ell_i \ge 2$).
	
	Applying \eqref{eestimatesum2}, \eqref{ynkestimate}, and \eqref{2aici} (with $A_i = \ell_i - 1$ and $C_i = 1$) we obtain
	\begin{flalign}
	\label{2eestimatesum2}
	\begin{aligned}
	\big| \mathcal{E} (m) \big| & \le 2 n \big( |m| - 1 \big)! \displaystyle\sum_{r = 2}^n \displaystyle\frac{2^{4r} r (n - r + 1)!}{n!} + 2 n \big( |m| - 1 \big)! \displaystyle\sum_{r = 2}^n 2^{4r} \big| \mathcal{C}_n (r) \big| \displaystyle\max_{\substack{\ell \in \mathcal{C}_n (r) \\ \max_{i \ne \mathfrak{s}} \ell_i \ge 2}} \displaystyle\frac{\prod_{j = 1}^r \ell_i! }{n!} \\
	&  \le 512 \big( |m| - 1 \big)! \displaystyle\sum_{r = 2}^n \displaystyle\frac{16^{r - 2} r}{(r - 2)!} + 4 n \big( |m| - 1 \big)! \displaystyle\sum_{r = 2}^n \displaystyle\frac{16^r (n - r)!}{n!} \binom{n - 1}{r - 1} \le 2^{15} e^{16} \big(|m| - 1 \big)!,
	\end{aligned}
	\end{flalign}
	
	\noindent from which we deduce the lemma, since $e \le 2^{3 / 2}$.
\end{proof}

\subsection{The Case When \texorpdfstring{$m$}{} Has Parts Equal to \texorpdfstring{$1$}{}}

\label{EstimateM2}

Our goal in this section is to establish the following proposition that estimates $\big| \mathcal{E} (m) \big|$ when $m$ has some parts equal to one.

\begin{prop}
	
\label{mestimate}

Let $k \le n$ be positive integers, and let $m = (m_1, m_2, \ldots , m_n)$ be an $n$-tuple of positive integers with at most $k$ parts equal to $1$. Denoting $|m| = \sum_{i = 1}^n m_i$, we have that $\big| \mathcal{E} (m) \big| \le 2^{78 k} |m|!$.
\end{prop}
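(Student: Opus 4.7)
The plan is to adapt the argument from the proof of Lemma \ref{mestimatelarge}, which established the analogous estimate when $m$ has no parts equal to $1$.

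First, I would follow the initial reductions of the proof of Lemma \ref{mestimatelarge}: applying $|\mathfrak{z}(k)|\le 4$ together with the bound \eqref{deltaestimate} on $|\Delta(\alpha)|$ to the defining expression \eqref{singleinnermany2} yields
\begin{flalign*}
|\mathcal{E}(m)|\le \sum_{r=2}^{n}\frac{2^{4r}}{r(r-1)}\sum_{\ell\in\mathcal{C}_n(r)}\sum_{\alpha\in\mathfrak{P}(\ell)}\prod_{j=1}^{r}|m_{\alpha^{(j)}}|!.
\end{flalign*}
For each $\alpha\in\mathfrak{P}(\ell)$, introduce $k_j=|\{i\in\alpha^{(j)}:m_i=1\}|$, so that $n_1:=\sum_j k_j\le k$. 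The key inequality $|m_{\alpha^{(j)}}|\ge 2\ell_j$ driving the proof of Lemma \ref{mestimatelarge} only weakens to $|m_{\alpha^{(j)}}|\ge 2\ell_j-k_j$. Applying \eqref{aici1} with $C_j=2\ell_j-k_j$ and $A_j=|m_{\alpha^{(j)}}|-C_j\ge 0$ (so $\sum A_j=|m|-2n+n_1$), and letting $\mathfrak{s}$ be the argmax of $C_j$, gives
\begin{flalign*}
\prod_j|m_{\alpha^{(j)}}|! \le (|m|-2n+n_1+2\ell_{\mathfrak{s}}-k_{\mathfrak{s}})!\prod_{j\ne\mathfrak{s}}(2\ell_j-k_j)!.
\end{flalign*}

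Next I would stratify the inner sum over $\alpha\in\mathfrak{P}(\ell)$ by the composition $(k_1,\ldots,k_r)$, using that for prescribed $(k_j)$ the number of compatible $\alpha$ is $\binom{n_1}{k_1,\ldots,k_r}\binom{n-n_1}{\ell_1-k_1,\ldots,\ell_r-k_r}$. Substituting into the bound above reduces the problem to an expression of the same shape as that appearing in the proof of Lemma \ref{mestimatelarge}, modified by extra factors depending on $(k_j)$.

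The main obstacle is bookkeeping the dependence on $n_1$: implemented naively, the bound above loses a factor of $|m|^{n_1}$ from the extra ``$+n_1$'' inside the first factorial, which is prohibitive for the target $2^{78k}|m|!$. To control this, I would exploit the compensating savings $(2\ell_j-k_j)!\le (2\ell_j)!/(\ell_j+1)^{k_j}$, the multinomial estimate \eqref{aijai} governing the sum over $(k_j)$, and the constraints $n_1\le k$ and $\sum_{j\ne\mathfrak{s}}k_j=n_1-k_{\mathfrak{s}}$. Balanced against the $|m|^{n_1-k_{\mathfrak{s}}}$ growth, these savings trade polynomial-in-$\ell_j$ decay terms summing to a factor of $2^{O(k)}$, eliminating all residual $|m|$-growth except one linear factor, which is absorbed into the passage from $(|m|-1)!$ in Lemma \ref{mestimatelarge} to $|m|!$ in the present proposition.

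Finally, the outer summation over $\ell\in\mathcal{C}_n(r)$ and $r$ is completed exactly as in the proof of Lemma \ref{mestimatelarge}: splitting off an ``exceptional'' family of compositions (one part of size $n-r+1$ and the rest equal to $1$), bounded directly, and estimating the remainder with a geometric series in $r$ using \eqref{2aici}. Combining the constants accrued at each step, together with $2^{O(k)}$ slack from the $(k_j)$-stratification, yields the claimed bound $|\mathcal{E}(m)|\le 2^{78k}|m|!$. Edge cases such as $n_1=n$ are handled separately---for instance, $\mathcal{E}(1^n)=0$ identically, since the support condition on $\mathfrak{z}$ forces $d_j=1$ for every singleton of a $1$-part, contradicting $\sum_j d_j=r-2$.
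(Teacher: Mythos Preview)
Your approach diverges from the paper's at the very first step, and the resulting gap is not closed by the outline you give. The paper does \emph{not} apply $|\mathfrak{z}|\le 4$ uniformly. Instead, Lemma~\ref{mestimate1st} first isolates those components $\alpha^{(i)}$ lying entirely inside the index set $\{j:m_j=1\}$: for such a component $|m_{\alpha^{(i)}}|=|\alpha^{(i)}|$, so the factor $\mathfrak{z}\big(|m_{\alpha^{(i)}}|-|\alpha^{(i)}|-d_i+1\big)=\mathfrak{z}(1-d_i)=\mathbf{1}_{d_i=1}$. This simultaneously replaces the bound $4$ by $1$ on those factors and forces $d_i=1$ there, shrinking $|\Delta|$ from $2^{2r}$ to $2^{2(r-s)}$ (with $s$ the number of all-one components). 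After peeling these off, every remaining component meets $\{j:m_j\ge 2\}$, and the proof then runs an argument structurally parallel to Lemma~\ref{mestimatelarge} on this reduced partition---this is where the estimates \eqref{aici1} and \eqref{aijai} are applied, with the roles of your $(k_j,\ell_j-k_j)$ played by compositions $(A_i,B_i)$ satisfying $B_i\ge 1$.

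Your proposal throws away the constraint $\mathfrak{z}(1-d_i)=\mathbf{1}_{d_i=1}$ and then tries to recover via the inequality $(2\ell_j-k_j)!\le (2\ell_j)!/(\ell_j+1)^{k_j}$. But the promised balancing does not go through as written: after summing the multinomial $\binom{n_1}{k_1,\dots,k_r}$ against the savings $\prod_{j\ne\mathfrak{s}}(\ell_j+1)^{-k_j}$ and the loss $|m|^{\,n_1-k_{\mathfrak{s}}}$, one obtains a factor of the form $\bigl(1+|m|\sum_{j\ne\mathfrak{s}}(\ell_j+1)^{-1}\bigr)^{n_1}$, and $|m|\sum_{j\ne\mathfrak{s}}(\ell_j+1)^{-1}$ is not $O(1)$---it can be of order $r$ or larger when many $\ell_j$ are small. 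So the claim that ``these savings trade polynomial-in-$\ell_j$ decay terms summing to a factor of $2^{O(k)}$'' is exactly the missing lemma, and it does not follow from the tools you list. To make your route work you would in effect have to separate out the components with $\ell_j-k_j=0$ (the all-one components), at which point you are reproducing the paper's decomposition anyway.
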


For the remainder of this section we will for notational convenience assume that $m_{n - k + 1} = m_{n - k + 2} = \cdots = m_n = 1$ and that $m_i \ge 2$ for each $1 \le i \le n - k$. We begin with the bound below.

In what follows, for any nonnegative integers $u, r \le n$, let $\mathfrak{V}_{n; r; u}$ denote the set of nonreduced set partitions $\alpha = \big( \alpha^{(1)}, \alpha^{(2)}, \ldots,  \alpha^{(r)} \big) \in \mathfrak{P}_{n; r}$ with the property that $\alpha^{(i)}$ contains at least one element in $\{ 1, 2, \ldots , n - u \}$ for each $i \in [1, r]$ or, equivalently, no $\alpha^{(i)}$ is a subset of $\{ n - u + 1, n - u + 2, \ldots ,  n \}$.

\begin{lem}
	
	\label{mestimate1st}
	
	Let $k \le n$ be positive integers, and let $m = (m_1, m_2, \ldots , m_n)$ be an $n$-tuple of positive integers with $m_{n - k + 1} = m_{n - k + 2} = \cdots = m_n = 1$ and $m_i \ge 2$ for each $1 \le i \le n - k$. Then,
		\begin{flalign}
	\label{12estimatee4}
	\begin{aligned}
	\big| \mathcal{E} (m) \big| & \le \displaystyle\sum_{r = 2}^n \displaystyle\sum_{s = 0}^r 2^{4 (r - s)} \binom{r}{s} \displaystyle\sum_{t = s}^k \displaystyle\frac{k!}{(k - t)!}  \binom{t - 1}{s - 1}	 \displaystyle\sum_{\alpha \in \mathfrak{V}_{n - t; r - s; k - t} } \displaystyle\prod_{j = 1}^{r - s} \big| m_{\alpha^{(j)}} \big|!.
	\end{aligned}
	\end{flalign}
\end{lem}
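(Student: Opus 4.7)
I begin from the triangle inequality applied to \eqref{singleinnermany2}, which gives
\begin{flalign*}
|\mathcal{E}(m)| \leq \sum_{r=2}^n (r-2)! \sum_{\alpha \in \mathcal{P}_{n;r}} \sum_{d \in \Delta(\alpha)} \prod_{i=1}^r \frac{|m_{\alpha^{(i)}}|!}{d_i!} \left| \mathfrak{z}\bigl(|m_{\alpha^{(i)}}| - |\alpha^{(i)}| - d_i + 1\bigr) \right|.
\end{flalign*}
The plan is to refine the naive estimate of Lemma \ref{mestimatelarge} by exploiting cancellation provided by components consisting entirely of ones. Call $\alpha^{(i)}$ \emph{pure-one} if $\alpha^{(i)} \subseteq \{n-k+1, \ldots, n\}$ and \emph{mixed} otherwise, and let $s = s(\alpha)$ and $t = t(\alpha)$ denote the number of pure-one components and the total number of ones they contain, so $0 \leq s \leq t \leq k$.

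The crucial observation is that on a pure-one component $\alpha^{(i)}$ of size $a_i$, we have $|m_{\alpha^{(i)}}| = a_i = |\alpha^{(i)}|$, so $\mathfrak{z}(|m_{\alpha^{(i)}}| - |\alpha^{(i)}| - d_i + 1) = \mathfrak{z}(1 - d_i)$. By the definition \eqref{zk}, this vanishes unless $1 - d_i$ is a nonnegative even integer, which combined with $d_i \geq 0$ forces $d_i = 1$, yielding the exact value $\mathfrak{z}(0) = 1$. Thus every pure-one component pins $d_i = 1$ and contributes only the factor $|m_{\alpha^{(i)}}|!$ to the summand. On mixed components we apply the crude bounds $|\mathfrak{z}| \leq 4$ and $\frac{1}{d_i!} \leq 1$. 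After pinning the $s$ pure-one coordinates, the remaining $r-s$ coordinates of $d$ form a nonnegative composition of $r - 2 - s$, so their count is at most $2^{2(r-s)}$ by \eqref{deltaestimate} (applied with $\ell(\alpha)$ replaced by $r-s$). Altogether the inner $d$-sum is bounded by $2^{4(r-s)} \prod_{i=1}^r |m_{\alpha^{(i)}}|!$.

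It remains to enumerate $\alpha \in \mathcal{P}_{n;r}$ by $(s,t)$. Passing to $\mathfrak{P}_{n;r}$ via the first identity of \eqref{aipksize} introduces a factor $\frac{(r-2)!}{r!} = \frac{1}{r(r-1)} \leq 1$, which we simply discard. A nonreduced partition contributing to the stratum $(s,t)$ is specified by: (i) the $\binom{r}{s}$ choices of positions for the pure-one components; (ii) the $\binom{k}{t}$ choices of a $t$-subset of $\{n-k+1, \ldots, n\}$; (iii) an ordered set partition of this $t$-subset into $s$ labeled nonempty pieces of sizes $(a_1, \ldots, a_s) \in \mathcal{C}_t(s)$, contributing $\binom{t}{a_1, \ldots, a_s}$ realizations per $a$; and (iv) an element of $\mathfrak{V}_{n-t;\, r-s;\, k-t}$ assigned to the $r-s$ mixed positions. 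Since $|m_{\alpha^{(i)}}| = a_j$ on the $j$th pure-one component, step (iii) contributes
\begin{flalign*}
\sum_{a \in \mathcal{C}_t(s)} \binom{t}{a_1, a_2, \ldots, a_s} \prod_{j=1}^s a_j! \;=\; \sum_{a \in \mathcal{C}_t(s)} t! \;=\; t!\binom{t-1}{s-1},
\end{flalign*}
which together with $\binom{k}{t} \cdot t! = \frac{k!}{(k-t)!}$ produces the coefficient $\frac{k!}{(k-t)!}\binom{t-1}{s-1}$ in \eqref{12estimatee4}. The mixed contribution is exactly $\sum_{\alpha \in \mathfrak{V}_{n-t;r-s;k-t}} \prod_{j=1}^{r-s} |m_{\alpha^{(j)}}|!$, and summing over $r$, $s$, $t$ yields the claimed bound.

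The one conceptual step is the first: recognizing that pure-one components pin $d_i = 1$ and strip off $s$ copies of the factor $2^4$ relative to the naive $2^{4r}$ estimate of Lemma \ref{mestimatelarge}. Everything else is combinatorial bookkeeping, with the single identity $\sum_{a \in \mathcal{C}_t(s)} \binom{t}{a} \prod a_j! = t!\binom{t-1}{s-1}$ being the only arithmetic worth isolating.
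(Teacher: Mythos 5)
Your proposal is correct and follows essentially the same route as the paper's proof: the same stratification by the components contained in $\{n-k+1,\ldots,n\}$ (your pure-one components, with $s$ and $t$ as in the paper), the same key observation that $\mathfrak{z}(1-d_i)=\textbf{1}_{d_i=1}$ pins those coordinates of $d$ and leaves at most $2^{2(r-s)}$ residual compositions, the bound $|\mathfrak{z}|\le 4$ on the mixed components, and the same symmetry reductions producing the factors $\binom{r}{s}$ and $\frac{k!}{(k-t)!}\binom{t-1}{s-1}$. The only difference is cosmetic bookkeeping (you evaluate $\sum_{a\in\mathcal{C}_t(s)}\binom{t}{a_1,\ldots,a_s}\prod_j a_j!=t!\binom{t-1}{s-1}$ directly, while the paper sums the constant $\binom{k}{k-t,C_1,\ldots,C_s}\prod_i C_i!=\frac{k!}{(k-t)!}$ over $C$), so no further comparison is needed.
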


\begin{proof}
	
	Recalling the definition \eqref{singleinnermany2} of $\mathcal{E}$, applying the first identity in \eqref{aipksize}, and setting $r = \ell (\alpha)$ yields
	\begin{flalign}
	\label{12estimatee}
	\big| \mathcal{E} (m) \big| \le \displaystyle\sum_{r = 2}^n \displaystyle\frac{1}{r (r - 1)}  \displaystyle\sum_{\alpha \in \mathfrak{P}_{n; r} }  \displaystyle\sum_{d \in \Delta (\alpha)} \displaystyle\prod_{j = 1}^r  \big| m_{\alpha^{(j)}} \big|! \mathfrak{z} \left( \big| m_{\alpha^{(j)}} \big| - \big| \alpha^{(j)} \big| - d_j + 1 \right).
	\end{flalign}
	
	In order to analyze the right side of \eqref{12estimatee}, we will fix which components of $\alpha$ are subsets of $\{ n - k + 1, n - k + 2, \ldots , n \}$; this will correspond to understanding when $\big| m_{\alpha^{(k)}} \big| = \big| \alpha^{(k)} \big|$ (the cardinality of $\alpha^{(k)}$). To that end, let $s \le r$ and $t \le k$ be nonnegative integers; $s$ will denote the number of components in $\alpha$ that are contained in $\{ n - k + 1, n - k + 2, \ldots , n \}$, and $t$ will denote the total number of elements in these components. Also let $C = (C_1, C_2, \ldots , C_s) \in \mathcal{C}_t (s)$, and let $\mathcal{I} = (i_1, i_2, \ldots , i_s)$ denote an $s$-tuple of positive integers such that $1 \le i_1 < i_2 < \cdots < i_s \le r$. The sequence $\mathcal{I}$ will specify which $\alpha^{(i)}$ are contained in $\{ n - k + 1, n - k + 2, \ldots , n \}$, and the composition $C$ will specify how many elements each of these $\alpha^{(i)}$ has.
	
	Let $\mathfrak{K}_{n; r} (C; \mathcal{I})$ denote the family of nonreduced partitions $\alpha = \big( \alpha^{(1)}, \alpha^{(2)}, \ldots , \alpha^{(r)} \big) \in \mathfrak{P}_{n; r}$ such the following holds. First, for each $1 \le j \le s$, we have that $\alpha^{(i_j)} \subseteq \{ n - k + 1, n - k + 2, \ldots , n \}$ and $\big| \alpha^{(i_j)} \big| = C_j$. Second, for each $i \in \{ 1, 2, \ldots , r \} \setminus \{ i_1, i_2, \ldots , i_s \}$, the component $\alpha^{(i)}$ contains at least one element less than $n - k + 1$. Thus, $\mathfrak{K}_{n; r} (C; \mathcal{I})$ identifies which components of $\alpha$ are subsets of $\{ n - k + 1, n - k + 2, \ldots , n \}$ and also identifies how many elements they have.
	
	 Observe that
	\begin{flalign*}
	\mathfrak{P}_{n; r} = \bigcup_{s = 0}^r \bigcup_{t = s}^k \bigcup_{C \in \mathcal{C}_t (s)}  \bigcup_{|\mathcal{I}| = s} \mathfrak{K}_{n; r} (C; \mathcal{I}),
	\end{flalign*}
	
	\noindent which upon insertion into \eqref{12estimatee} yields
	\begin{flalign}
	\label{12estimatee2}
	\begin{aligned}
	\big| \mathcal{E} (m) \big| & \le \displaystyle\sum_{r = 2}^n \displaystyle\sum_{s = 0}^r \displaystyle\sum_{t = s}^k \displaystyle\sum_{C \in \mathcal{C}_t (s)} \displaystyle\sum_{|\mathcal{I}| = s} \displaystyle\sum_{\alpha \in \mathfrak{K}_{n; r} (C; \mathcal{I}) }  \displaystyle\sum_{d \in \Delta (\alpha)} \displaystyle\prod_{j = 1}^{r}  \big| m_{\alpha^{(j)}} \big|! \mathfrak{z} \left( \big| m_{\alpha^{(j)}} \big| - \big| \alpha^{(j)} \big| - d_j + 1 \right) \\
	& = \displaystyle\sum_{r = 2}^n \displaystyle\sum_{s = 0}^r \displaystyle\sum_{t = s}^k \displaystyle\sum_{C \in \mathcal{C}_t (s)} \displaystyle\sum_{|\mathcal{I}| = s} \displaystyle\sum_{\alpha \in \mathfrak{K}_{n; r} (C; \mathcal{I}) }  \displaystyle\sum_{d \in \Delta (\alpha)}  \displaystyle\prod_{i = 1}^s C_i! \displaystyle\prod_{i \in \mathcal{I}} \mathfrak{z} ( 1 - d_i ) \\
	& \qquad \qquad \qquad \qquad \qquad \qquad \qquad \qquad \quad \times \displaystyle\prod_{\substack{1 \le i \le r \\ i \notin \mathcal{I}}}  \big| m_{\alpha^{(i)}} \big|! \mathfrak{z} \left( \big| m_{\alpha^{(i)}} \big| - \big| \alpha^{(i)} \big| - d_i + 1 \right),
	\end{aligned}
	\end{flalign}
	
	\noindent where we have used the fact that $m_{n - k + 1} = m_{n - k + 2} = \cdots = m_n = 1$.

	To further estimate the right side of \eqref{12estimatee2}, first observe that the summand on the right side of \eqref{12estimatee2} does not depend on the choice of $\mathcal{I} \subseteq \{ 1, 2, \ldots , r \}$ with $|\mathcal{I}| = s$. Thus we can fix $\mathcal{I} = \mathcal{J} = \mathcal{J}_s = \{ 1, 2, \ldots , s \}$ and multiply the summand by $\binom{r}{s}$.
	
	Further observe that $\mathfrak{z} (1 - d_i) = \textbf{1}_{d_i = 1}$, since $\mathfrak{z} (k) = 0$ if $k$ is either odd or negative and $\mathfrak{z} (0) = 1$. Thus, let $\Delta_{\mathcal{J}} (\alpha) \subseteq \Delta (\alpha)$ denote the subset of $(d_1, d_2, \ldots , d_{\ell (\alpha)}) \in \Delta (\alpha)$ such that $d_i = 1$ for $i \in \mathcal{J}_s$.
	
	Inserting these two facts and the additional fact that $\big| \Delta_{\mathcal{J}} (\alpha)	 \big| = \binom{2 \ell (\alpha) - 2s - 3}{\ell (\alpha) - s - 2} \le 2^{2 \ell (\alpha) - 2 s} = 2^{2 (r - s)}$ (see \eqref{deltaestimate}) into \eqref{12estimatee2} yields
	\begin{flalign}
	\label{12estimatee3}
	\begin{aligned}
	\big| \mathcal{E} (m) \big| & \le \displaystyle\sum_{r = 2}^n \displaystyle\sum_{s = 0}^r 2^{4 (r - s)} \binom{r}{s} \displaystyle\sum_{t = s}^k \displaystyle\sum_{C \in \mathcal{C}_t (s)} \displaystyle\sum_{\alpha \in \mathfrak{K}_{n; r} (C; \mathcal{J}) }  \displaystyle\prod_{i = 1}^s C_i! \displaystyle\prod_{i = s + 1}^r \big| m_{\alpha^{(i)}} \big|!,
	\end{aligned}
	\end{flalign}
	
	\noindent where we used the bound (due to the last estimate in \eqref{2ll}) $\mathfrak{z} (k) \le 4$ when $i \notin \mathcal{J}$.
	
	To proceed, observe that any $\alpha \in \mathfrak{K}_{n; r} (C; \mathcal{J})$ can be identified as an ordered union $\alpha' \cup \mathcal{U}_1 \cup \mathcal{U}_2 \cup \cdots \cup \mathcal{U}_s$, where the $\mathcal{U}_i$ are disjoint subsets of $\{ n - k + 1, n - k + 2 , \ldots , n \}$ such that $\big| \mathcal{U}_i \big| = C_i$ for each $i \in [1, s]$, and $\alpha'$ is a nonreduced partition of $\{ 1, 2, \ldots , n \} \setminus \bigcup_{i = 1}^s \mathcal{U}_i$, none of whose components is a subset of $\{ n - k + 1, n - k + 2, \ldots , n \}$. Since the rightmost summand in \eqref{12estimatee3} does not depend on the explicit choice of the $\mathcal{U}_i$ satisfying these properties, we can fix some choice of the $\mathcal{U}_i$ and multiply the summand on the right side of \eqref{12estimatee3} by the number of such choices, which is $\binom{k}{k - t, C_1, C_2, \ldots , C_s}$. If we fix $\bigcup_{i = 1}^s \mathcal{U}_i = \{ n - t + 1, n - t + 2, \ldots , n \}$, then $\alpha'$ becomes a member of $\mathfrak{V}_{n - t; r - s, k - t}$.
	
	It follows upon insertion into \eqref{12estimatee3} that
	\begin{flalign}
	\label{12estimatee41}
	\begin{aligned}
	\big| \mathcal{E} (m) \big| & \le \displaystyle\sum_{r = 2}^n \displaystyle\sum_{s = 0}^r 2^{4 (r - s)} \binom{r}{s} \displaystyle\sum_{t = s}^k \displaystyle\sum_{C \in \mathcal{C}_t (s)} \displaystyle\sum_{\alpha \in \mathfrak{V}_{n - t; r - s; k - t} } \binom{k}{k - t, C_1, C_2, \ldots , C_s} \displaystyle\prod_{i = 1}^s C_i! \displaystyle\prod_{i = 1}^{r - s}\big| m_{\alpha^{(i)}} \big|!.
	\end{aligned}
	\end{flalign}
	
	Now the lemma follows from \eqref{12estimatee41}, the fact that the summand on the right side of \eqref{12estimatee41} does not depend on $C$, and the fact (from \eqref{ynkestimate}) that $\big| \mathcal{C}_s (t) \big|\le \binom{t - 1}{s - 1}$.
	\end{proof}

	Now we can establish Proposition \ref{mestimate} in a similar way to how we established Proposition \ref{mestimatelarge}.

	\begin{proof}[Proof of Proposition \ref{mestimate}]
		
	We will begin by rewriting the sum over $\alpha$ in \eqref{12estimatee4}. To that end, for any $\alpha = \big( \alpha^{(1)}, \alpha^{(2)}, \ldots , \alpha^{(r - s)} \big) \in \mathfrak{V}_{n - t; r - s; k - t}$, define $\beta = \beta (\alpha) = \big( \beta^{(1)}, \beta^{(2)}, \ldots , \beta^{(r - s)} \big) \in \mathfrak{P}_{n - k; r - s}$ by $\beta^{(i)} = \alpha^{(i)} \cap \{ 1, 2, \ldots ,  n - k \}$ for each $1 \le i \le r - s$; observe that no $\beta^{(i)}$ is empty since $\alpha \in \mathfrak{V}_{n - t; r - s; k - t}$.  Further define the (possibly empty) sets $\mathcal{T}_1, \mathcal{T}_2, \ldots , \mathcal{T}_{r - s}$ by $\mathcal{T}_i = \mathcal{T}_i (\alpha) = \alpha^{(i)} \cap \{ n - k + 1, n - k + 2, \ldots , n - t \}$; then the $\mathcal{T}_i$ are disjoint and satisfy $\bigcup_{i = 1}^{r - s} \mathcal{T}_i = \{n - k + 1, n - k + 2, \ldots , n - t \}$.
	
	Any $\alpha \in \mathfrak{V}_{n - t; r - s; k - t}$ can be uniquely recovered from $\beta (\alpha) \in \mathfrak{P}_{n - k; r - s}$ and disjoint family of sets $\mathcal{T} = (\mathcal{T}_1, \mathcal{T}_2, \ldots , \mathcal{T}_{r - s})$ such that $\bigcup_{i = 1}^{r - s} \mathcal{T}_i = \{ n - k + 1, n - k + 2, \ldots , n - t \}$. Thus, instead of having the sum on the right side of \eqref{12estimatee4} be over all $\alpha$ we can therefore take it over all $\beta$ and $\mathcal{T}$ satisfying the above conditions. More precisely, let $\mathfrak{T}_{r - s} (n; k; t)$ denote the family of all $(r - s)$-tuples of disjoint sets $\mathcal{T} = (\mathcal{T}_1, \mathcal{T}_2, \ldots , \mathcal{T}_{r - s})$ such that $\bigcup_{i = 1}^{r - s} \mathcal{T}_i = \{ n - k + 1, n - k + 2, \ldots , n - t \}$. We find from \eqref{12estimatee4} (and the fact that $\frac{k!}{(k - t)!} = t! \binom{k}{t}$) that
	\begin{flalign}
	\label{12estimatee5}
	\big| \mathcal{E} (m) \big| & \le \displaystyle\sum_{r = 2}^n \displaystyle\sum_{s = 0}^r 2^{4 (r - s)} \binom{r}{s} \displaystyle\sum_{t = s}^k t! \binom{k}{t}  \binom{t - 1}{s - 1}	 \displaystyle\sum_{\beta \in \mathfrak{P}_{n - k; r - s} } \displaystyle\sum_{\mathcal{T} \in \mathfrak{T}_{r - s} (n; k; t)} \displaystyle\prod_{i = 1}^{r - s} \Big( \big| m_{\beta^{(i)}} \big| + |\mathcal{T}_i| \big)!,
	\end{flalign}
	
	\noindent where we have used the fact that $\big| m_{\alpha^{(i)}} \big| = \big| m_{\beta^{(i)}} \big| + |\mathcal{T}_i|$ since $m_{n - k + 1} = m_{n - k + 2} = \cdots = m_{n - t} = 1$.
	
	Now observe that, for fixed $\beta$, the summand on the right side of \eqref{12estimatee5} does not depend on the explicit choice of $\mathcal{T}$ but only on the sizes $|\mathcal{T}_i|$. Thus, for any nonnegative composition $A = (A_1, A_2, \ldots , A_{r - s}) \in \mathcal{G}_{k - t} (r - s)$, let $\mathfrak{T} (A) = \mathfrak{T} (A; n; k)$ denote the set of all $\mathcal{T} \in \mathfrak{T}_{r - s} (n; k; t)$ such that $|\mathcal{T}_i| = A_i$ for each $1 \le i \le r - s$. Using the last identity in \eqref{aipksize} and the fact that $\mathfrak{T}_{r - s} (n; k; t) = \bigcup_{A \in \mathcal{G}_{k - t} (r - s)} \mathfrak{T} (A)$, we deduce from \eqref{12estimatee5} that
	\begin{flalign*}
	\big| \mathcal{E} (m) \big| & \le \displaystyle\sum_{r = 2}^n \displaystyle\sum_{s = 0}^r 2^{4 (r - s)} \binom{r}{s} \displaystyle\sum_{t = s}^k t! \binom{k}{t}  \binom{t - 1}{s - 1} \\
	& \qquad \qquad \times	\displaystyle\sum_{B \in \mathcal{C}_{n - k} (r - s)} \displaystyle\sum_{\beta \in \mathfrak{P} (B) } \displaystyle\sum_{A \in \mathcal{G}_{k - t} (r - s)} \displaystyle\sum_{\mathcal{T} \in \mathfrak{T} (A)} \displaystyle\prod_{i = 1}^{r - s} \Big( \big| m_{\beta^{(i)}} \big| + A_i \big)! \\
	& = \displaystyle\sum_{r = 2}^n \displaystyle\sum_{s = 0}^r 2^{4 (r - s)} \binom{r}{s} \displaystyle\sum_{t = s}^k t! \binom{k}{t}  \binom{t - 1}{s - 1} \\
	& \qquad \qquad \times	\displaystyle\sum_{B \in \mathcal{C}_{n - k} (r - s)} \displaystyle\sum_{\beta \in \mathfrak{P} (B) } \displaystyle\sum_{A \in \mathcal{G}_{k - t} (r - s)} \binom{k - t}{A_1, A_2, \ldots , A_{r - s}} \displaystyle\prod_{i = 1}^{r - s} \Big( \big| m_{\beta^{(i)}} \big| + A_i \Big)! \\
	& \le 2^k k! \displaystyle\sum_{r = 2}^n \displaystyle\sum_{s = 0}^r 2^{4 (r - s)} \binom{r}{s}  \displaystyle\sum_{t = s}^k \displaystyle\sum_{A \in \mathcal{G}_{k - t} (r - s)}  \displaystyle\sum_{B \in \mathcal{C}_{n - k} (r - s)} \displaystyle\sum_{\beta \in \mathfrak{P} (B)} \displaystyle\prod_{i = 1}^{r - s} \displaystyle\frac{\Big( \big| m_{\beta^{(i)}} \big| + A_i \Big)! }{A_i!},
	\end{flalign*}
	
	\noindent where we have used the fact that $\big| \mathfrak{T} (A) \big| = \binom{k - t}{A_1, A_2, \ldots , A_{r - s}}$ and the estimate $\binom{t - 1}{s - 1} \le 2^t \le 2^k$.
	
	It follows that
	\begin{flalign}
	\label{12estimatee51}
	\begin{aligned}
	\big| \mathcal{E} (m) \big| & \le 2^k k! \displaystyle\sum_{r = 2}^n \displaystyle\sum_{s = 0}^r 2^{4 (r - s)} \binom{r}{s}  \displaystyle\sum_{t = s}^k \displaystyle\sum_{A \in \mathcal{G}_{k - t} (r - s)} \displaystyle\sum_{B \in \mathcal{C}_{n - k} (r - s)} \binom{n - k}{B_1, B_2, \ldots , B_{r - s}} \\
	& \qquad \qquad \qquad \qquad \qquad \qquad \qquad \qquad \qquad \qquad \times \displaystyle\max_{\beta \in \mathfrak{P} (B)} \displaystyle\prod_{i = 1}^{r - s} \displaystyle\frac{\Big( \big| m_{\beta^{(i)}} \big| + A_i \Big)!}{A_i!},
	\end{aligned}
	\end{flalign}
	
	\noindent where we have used the fact that $\big| \mathfrak{P} (B) \big| = \binom{n - k}{B_1, B_2, \ldots , B_{r - s}}$ (recall the second identity in \eqref{aipksize}).
	
	Next we use \eqref{aici1} with their $A_i$ and $C_i$ equal to our $\big| m_{\beta^{(i)}} \big| - 2 B_i$ and $A_i + 2B_i$, respectively (which we may do since $m_i \ge 2$ for each $i \in [1, n - k]$). Setting $\mathfrak{h} = \mathfrak{h} (A, B) \in [1, r - s]$ to be the minimal index such that $A_{\mathfrak{h}} + 2 B_{\mathfrak{h}} = \max_{1 \le i \le r - s} (A_i + 2B_i)$, we deduce from \eqref{12estimatee51} that
	\begin{flalign}
	\label{12estimatee6}
	\begin{aligned}
	\big| \mathcal{E} (m) \big| & \le 2^k k! \displaystyle\sum_{r = 2}^n \displaystyle\sum_{s = 0}^r 2^{4 (r - s)} \binom{r}{s}  \displaystyle\sum_{t = s}^k \displaystyle\sum_{A \in \mathcal{G}_{k - t} (r - s)} \displaystyle\sum_{B \in \mathcal{C}_{n - k} (r - s)} \binom{n - k}{B_1, B_2, \ldots , B_{r - s}} \\
	& \qquad \qquad \qquad \times \big( |m| - k - 2 (n - k) + A_{\mathfrak{h}} + 2 B_{\mathfrak{h}} \big)! \displaystyle\prod_{i = 1}^{r - s} \displaystyle\frac{1}{A_i!} \displaystyle\prod_{\substack{1 \le i \le r - s \\ i \ne \mathfrak{h}}} ( A_i + 2 B_i )!,
	\end{aligned}
	\end{flalign}
	
	\noindent where we have used the fact that $\sum_{i = 1}^{r - s}  \big( \big|m_{\beta^{(i)}} \big| - 2 B_i \big) = |m| - k - 2 (n - k)$.
	
	Since $|m| - t \ge 2n - k - t \ge A_{\mathfrak{h}} + 2 B_{\mathfrak{h}}$, we have that
	\begin{flalign*}
	\big( |m| - k - 2 (n - k) + A_{\mathfrak{h}} + 2 B_{\mathfrak{h}} \big)! & = \big( |m| - t \big)! \displaystyle\prod_{i = 0}^{2n - k - t - A_{\mathfrak{h}} - 2 B_{\mathfrak{h}} - 1} \displaystyle\frac{1}{|m| - t - i} \\
	& \le \big( |m| - t \big)! \displaystyle\prod_{i = 0}^{2n - k - t - A_{\mathfrak{h}} - 2 B_{\mathfrak{h}} - 1} \displaystyle\frac{1}{2n - k - t - i} \\
	&  =  \displaystyle\frac{\big( |m| - t \big)! (A_{\mathfrak{h}} + 2 B_{\mathfrak{h}})!}{(2n - k - t)!},
	\end{flalign*}
	
	\noindent so it follows from \eqref{12estimatee6} that
	\begin{flalign*}
	\big| \mathcal{E} (m) \big| & \le 2^k k! \displaystyle\sum_{r = 2}^n \displaystyle\sum_{s = 0}^r 2^{4 (r - s)} \binom{r}{s} \displaystyle\sum_{t = s}^k  \displaystyle\frac{(n - k)! \big( |m| - t \big)!}{(2n - k - t)!}  \displaystyle\sum_{A \in \mathcal{G}_{k - t} (r - s)} \displaystyle\sum_{B \in \mathcal{C}_{n - k} (r - s)}\displaystyle\prod_{i = 1}^{r - s} \displaystyle\frac{( A_i + 2 B_i )!}{A_i! B_i!}.
	\end{flalign*}
	
	\noindent Using the fact that $k! = t! (k - t)! \binom{k}{t} \le 2^k t! (k - t)!$ for any $0 \le t \le k$, we deduce that
	\begin{flalign}
	\label{12estimatee7}
	\begin{aligned}
	\big| \mathcal{E} (m) \big| & \le 4^k \displaystyle\sum_{r = 2}^n \displaystyle\sum_{s = 0}^r 2^{4 (r - s)} \binom{r}{s} \displaystyle\sum_{t = s}^k t! \big( |m| - t \big)! \\
	& \qquad \qquad \times \displaystyle\sum_{A \in \mathcal{G}_{k - t} (r - s)} \displaystyle\sum_{B \in \mathcal{C}_{n - k} (r - s)} \displaystyle\frac{(k - t)! (n - k)!}{(2n - k - t)!} \displaystyle\prod_{i = 1}^{r - s} B_i! \displaystyle\prod_{i = 1}^{r - s} \binom{A_i + 2 B_i}{A_i, B_i, B_i} \\
	& \le 4^k \displaystyle\sum_{r = 2}^n \displaystyle\sum_{s = 0}^r 2^{4 (r - s)} \binom{r}{s} \displaystyle\sum_{t = s}^k t! \big( |m| - t \big)! \displaystyle\sum_{A \in \mathcal{G}_{k - t} (r - s)} \displaystyle\sum_{B \in \mathcal{C}_{n - k} (r - s)} \displaystyle\frac{1}{(n - k)!} \displaystyle\prod_{i = 1}^{r - s} B_i!,
	\end{aligned}
	\end{flalign}
	
	\noindent where we have used the fact that $\prod_{i = 1}^{r - s} \binom{A_i + 2B_i}{A_i, B_i, B_i} \le \binom{2n - k - t}{k - t, n - k, n - k}$, which holds due to \eqref{aijai}, since $\sum_{i = 1}^{r - s} A_i = k - t$ and $\sum_{i = 1}^{r - s} B_i = n - k$.
	
	Now, since $\max_{B \in \mathcal{C}_{n - k} (r - s)} \prod_{i = 1}^{r - s} B_i! \le (n - k - r + s + 1)!$ by \eqref{aici1} (applied with each $A_i$ equal to $B_i - 1$ and each $C_i$ equal to $1$); since $\big| \mathcal{C}_{n - k} (r - s) \big| = \binom{n - k - 1}{r - s - 1}$ (where if $r = s$ we replace this quantity by $1$) from the first statement of \eqref{ynkestimate}; and since $\big| \mathcal{G}_{k - t} (r - s) \big| = \binom{k - t + r - s - 1}{r - s - 1} \le 2^{k + r - s}$ from the second statement of \eqref{ynkestimate}, we deduce that
	\begin{flalign}
	\label{2k1nk2mestimate}
	\begin{aligned}
	\big| \mathcal{E} (m) \big|	&  \le 2^{3k} \displaystyle\sum_{r = 2}^n \displaystyle\sum_{s = 0}^r 2^{5(r - s)} \binom{r}{s} \displaystyle\sum_{t = s}^k t! \big( |m| - t \big)! \binom{n - k - 1}{r - s - 1} \displaystyle\frac{(n - k - r - s + 1)!}{ (n - k)!} \\
	&  \le 2^{3 k + 2} |m|! \displaystyle\sum_{s = 0}^k \displaystyle\sum_{r = s}^n  2^{5 (r - s)} \binom{r}{s} \displaystyle\frac{1}{ (r - s - 1)!} \\
	& = 2^{3 k + 2} |m|! (k + 1) + 2^{3 k + 2} |m|! \displaystyle\sum_{s = 0}^k \displaystyle\sum_{r = s + 1}^n  2^{5 (r - s)} \binom{r}{s} \displaystyle\frac{1}{ (r - s - 1)!} \\
	& \le 2^{4 k + 2} |m|! + 2^{3 k + 7} |m|! (\mathcal{E}_1 + \mathcal{E}_2),
	\end{aligned}
	\end{flalign}
	
	\noindent where we have used the facts that $\sum_{t = s}^{|m|} \big( |m| - t \big)! t! \le 4 m!$ and $k + 1 \le 2^k$ (which follow from the fourth and first estimates in \eqref{2ll}, respectively), and we have denoted
	\begin{flalign*}
	\mathcal{E}_1 =  \displaystyle\sum_{s = 0}^k \displaystyle\sum_{r = s + 1}^{3k}  2^{5 (r - s - 1)} \binom{r}{s} \displaystyle\frac{1}{ (r - s - 1)!}; \qquad \mathcal{E}_2 = \displaystyle\sum_{s = 0}^k \displaystyle\sum_{r = 3k + 1}^{\infty}  2^{5 (r - s - 1)} \binom{r}{s} \displaystyle\frac{1}{ (r - s - 1)!}.	
	\end{flalign*}
	
	\noindent Now, since $\binom{r}{s} \le 2^r$ and $k + 1 \le 2^k$ (recall the first bound in \eqref{2ll}), we have that
	\begin{flalign}
	\label{e1estimate}
	\mathcal{E}_1 \le 2^{3k}  \displaystyle\sum_{s = 0}^k \displaystyle\sum_{r = s + 1}^{3k}  2^{5 (r - s - 1)} \displaystyle\frac{1}{ (r - s - 1)!} \le 2^{3k}  \displaystyle\sum_{s = 0}^k \displaystyle\sum_{r = 0}^{\infty}  \displaystyle\frac{32^r}{r!} \le e^{32} (k + 1) 2^{3k} \le e^{32} 2^{4k} \le 2^{4k + 48}.
	\end{flalign}
	
	\noindent Furthermore, since $\binom{r}{s} \le \binom{r}{k} \le r^k$ for $s \le k \le \frac{r}{2}$ and $r^k \le 3^k \binom{r - k - 1}{k} k!$ for $r \ge 3k$, we have that
	\begin{flalign}
	\label{e2estimate}
	\begin{aligned}
	\mathcal{E}_2 & \le \displaystyle\sum_{s = 0}^k \displaystyle\sum_{r = 3k + 1}^{\infty}  2^{5 (r - s - 1)} \binom{r}{s} \displaystyle\frac{1}{ (r - s - 1)!} \\
	& \le (k + 1) \displaystyle\sum_{r = 3k}^{\infty}    \displaystyle\frac{32^r r^k}{(r - k - 1)!} \le  (k + 1) 3^k \displaystyle\sum_{r = 3k}^{\infty}  \displaystyle\frac{32^r}{ (r - 2k - 1)!}  \le 2^{13 k + 5} \displaystyle\sum_{r = 0}^{\infty}  \displaystyle\frac{32^r}{ r!} \le 2^{13 k + 53}.
	\end{aligned}
	\end{flalign}
	
	\noindent Since $2^{4k + 2} + 2^{3k + 7} (2^{4k + 48} + 2^{13k + 53}) \le 2^{16k + 62} \le 2^{78k}$, the proposition follows from \eqref{2k1nk2mestimate}, \eqref{e1estimate}, \eqref{e2estimate}\footnote{This in fact shows $\big| \mathcal{E} (m) \big| \le 2^{16k + 62} |m|! \le 2^{78k} |m|!$. However, due to the way in which we will use Proposition \ref{mestimate} in Section \ref{ManyEstimate} below, the constants $2^{78k}$ and $2^{16k + 62}$ will be of similar efficiency, and so we use the former.}.	
\end{proof}

\section{Proof of Theorem \ref{volume}}

\label{VolumeProof}

In this section we establish Theorem \ref{volume}. In Section \ref{ManyEstimate} we provide bounds on the multi-fold inner product $\big\langle p_{\lambda^{(1)}} \b| p_{\lambda^{(2)}} \b| \cdots \b| p_{\lambda^{(n)}} \big\rangle$. These estimates will be used in Section \ref{EstimateC} to conclude the proof of Theorem \ref{volume}.

\subsection{Estimating the Multi-Fold Inner Product}

\label{ManyEstimate}

Our goal in this section is to provide two estimates for the multi-fold inner product given by \eqref{manyinnermany}. The first, stated as Proposition \ref{innermanyestimate} below, provides estimates on such inner products in general; the second, Proposition \ref{innermanyestimate2} provides a stronger estimate if we make an additional assumption on the partition sequence $\big\{ \lambda^{(i)} \big\}$.

\begin{prop}
	
	\label{innermanyestimate}
	
	Let $a \ge 1$ and $b \ge 0$ be integers; $\lambda^{(1)}, \lambda^{(2)}, \ldots , \lambda^{(a)}$ be partitions of lengths at least two; and $D_1, D_2, \ldots , D_b \ge 2$ be integers. Denote $\ell_i = \ell \big(\lambda^{(i)} \big)$ for each $i \in [1, a]$, $L = \sum_{i = 1}^a \ell_i$, $|\lambda| = \sum_{i = 1}^a \big| \lambda^{(i)} \big|$, and $B = \sum_{i = 1}^b D_i$. Then,
	\begin{flalign}
	\label{innerp1}
	\Big| \langle p_{\lambda^{(1)}} \b| p_{\lambda^{(2)}} \b| \cdots \b| p_{\lambda^{(a)}} \b| p_{D_1} \b| p_{D_2} \b| \cdots \b| p_{D_b} \rangle \Big| \le 2^{89 L + 5} \big( |\lambda| + a + B - L \big)!.
	\end{flalign}

\end{prop}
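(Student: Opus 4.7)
My plan is to expand the multi-fold inner product via Definition \ref{innermanyp} as a sum over complementary reduced set partitions, bound each single inner product using the estimates from Section \ref{EstimateM}, and then handle the resulting sum with Lemma \ref{hestimate}. Let $\rho \in \mathcal{P}_{L+b;\,a+b}$ be the canonical reduced set partition of $\{1,\ldots,L+b\}$ whose first $a$ blocks correspond (in order) to $\lambda^{(1)}, \ldots, \lambda^{(a)}$ and whose last $b$ blocks are the singletons corresponding to $D_1, \ldots, D_b$. By Definition \ref{innermanyp},
$$\big\langle p_{\lambda^{(1)}} \b| \cdots \b| p_{\lambda^{(a)}} \b| p_{D_1} \b| \cdots \b| p_{D_b} \big\rangle = \sum_{\alpha \in \mathscr{C}(\rho)} \prod_{i=1}^{L-a+1} \big\langle \lambda_{\alpha^{(i)}} \big\rangle,$$
where each $\alpha \in \mathscr{C}(\rho)$ has exactly $L+b-(a+b)+1 = L-a+1$ components.

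For each such $\alpha$ and each $i$, let $k_i$ denote the number of parts of $\lambda_{\alpha^{(i)}}$ equal to $1$; since every $D_j \ge 2$, all such $1$'s originate from parts of the $\lambda^{(j)}$'s, so $\sum_i k_i \le L$. Combining the defining identity \eqref{singleinnermany} with $|\mathfrak{z}(k)| \le 4$ (from \eqref{2ll}), Lemma \ref{mestimatelarge} when $k_i = 0$, and Proposition \ref{mestimate} when $k_i \ge 1$, I obtain a uniform estimate
$$\big|\langle \lambda_{\alpha^{(i)}} \rangle\big| \le 2^{78 k_i + C_0} \cdot \big|\lambda_{\alpha^{(i)}}\big|!$$
for some fixed absolute constant $C_0$, whence the full product across the $L-a+1$ components is bounded by $2^{78L + C_0(L-a+1)} \prod_i \big|\lambda_{\alpha^{(i)}}\big|!$.

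It remains to estimate $\sum_{\alpha} \prod_i \big|\lambda_{\alpha^{(i)}}\big|!$. I parameterize each complementary $\alpha$ by $A_i := \big|\alpha^{(i)} \cap \{1,\ldots,L\}\big|$ and $B_i := \big|\alpha^{(i)} \cap \{L+1,\ldots, L+b\}\big|$, giving $A \in \mathcal{C}_L(L-a+1)$ and $B \in \mathcal{G}_b(L-a+1)$ (after absorbing any block consisting solely of $D$-indices into an adjacent block, which only loses a $2^{O(L)}$ factor); the transversality from Lemma \ref{transverse} forces $A_i \le a$ and $B_i \le b$. Since each $D_j \ge 2$ and each $\lambda$-part is $\ge 1$, one has $\big|\lambda_{\alpha^{(i)}}\big| \ge A_i + 2 B_i$. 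Writing $\big|\lambda_{\alpha^{(i)}}\big| = A_i + 2B_i + E_i$ with $E_i \ge 0$ and $\sum_i E_i = |\lambda| + B - L - 2b$, and applying \eqref{aici1} with $\max_i(A_i + 2B_i) \le a + 2b$, yields
$$\prod_{i=1}^{L-a+1} \big|\lambda_{\alpha^{(i)}}\big|! \le \big(|\lambda| + a + B - L\big)! \prod_{i \ne i_0} (A_i + 2B_i)!,$$
where $i_0$ is the index maximizing $A_i + 2B_i$. For fixed $(A,B)$, the number of transversal partitions with this profile is bounded by a product of multinomial coefficients $\prod_i \binom{A_i + 2B_i}{A_i, B_i, B_i}$, coming from selecting within each block the contributing $\lambda$- and $D$-indices. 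The resulting weighted sum over $(A,B)$ is exactly the one estimated by Lemma \ref{hestimate}, producing $\frac{2^{9L+5}(a+2b)!}{a!\,b!}$; combining with the $2^{78L+C_0 L}$ overhead from the previous paragraph gives $2^{89L+5}\big(|\lambda|+a+B-L\big)!$ after absorbing constants.

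The principal obstacle will be the counting step above: rigorously upper-bounding the number of complementary partitions with a prescribed $(A,B)$ profile in a form that interfaces cleanly with the summand $\prod_i (A_i + 2B_i)!/(A_i!B_i!)$ in Lemma \ref{hestimate}. Fortunately, complementarity is only needed as a restriction of the sum, so one may drop it and work with all transversal partitions; the transversality of Lemma \ref{transverse} (each component intersects each $\rho^{(j)}$ in at most one element) then forces precisely the multinomial structure that matches Lemma \ref{hestimate}.
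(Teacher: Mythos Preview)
Your overall architecture matches the paper's: expand via Definition~\ref{innermanyp}, bound each $\langle\lambda_{\alpha^{(i)}}\rangle$ pointwise using Section~\ref{EstimateM}, parametrize by the profiles $(A_i,B_i)$, and finish with Lemma~\ref{hestimate}. However, there is a genuine gap in the factorial step that makes the argument, as written, fail to close.

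When you apply \eqref{aici1} and immediately replace the leading factorial by $(|\lambda|+a+B-L)!$ using the crude bound $A_{\mathfrak{s}}+2B_{\mathfrak{s}}\le a+2b$, you throw away the ratio $(A_{\mathfrak{s}}+2B_{\mathfrak{s}})!/(a+2b)!$. In the paper's proof this ratio is retained explicitly via the telescoping estimate
\[
\big(|\lambda|+B+A_{\mathfrak{s}}+2B_{\mathfrak{s}}-L-2b\big)!
\;\le\;
\frac{(|\lambda|+a+B-L)!\,(A_{\mathfrak{s}}+2B_{\mathfrak{s}})!}{(a+2b)!},
\]
and is precisely what cancels the $(a+2b)!$ that Lemma~\ref{hestimate} produces in its numerator. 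Without it, after summing over $(A,B)$ and using the correct count $|\mathfrak{R}(A,B)|\le\binom{L}{A_1,\dots}\binom{b}{B_1,\dots}$, you are left with an uncancelled factor of order $(a+2b)!/(a!\,b!)$, which is \emph{not} bounded by $2^{O(L)}$ since $b$ may be arbitrarily large compared to $L$. So the ``absorbing constants'' at the end does not go through.

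Two smaller points: (i) Your counting bound $\prod_i\binom{A_i+2B_i}{A_i,B_i,B_i}$ for the number of transversal partitions with a given $(A,B)$-profile is neither the standard multinomial count nor justified; the correct (and sufficient) bound is $\binom{L}{A_1,\dots,A_{L-a+1}}\binom{b}{B_1,\dots,B_{L-a+1}}$, which is what feeds cleanly into Lemma~\ref{hestimate}. (ii) The parenthetical about ``absorbing any block consisting solely of $D$-indices'' is unnecessary and unsound: complementarity with $\rho$ already forces every $\alpha^{(i)}$ to meet $\{1,\dots,L\}$, so $A\in\mathcal{C}_L(L-a+1)$ holds automatically, with no merging needed.
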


\begin{rem}
	
	If we define the one-part partitions $\lambda^{(a + j)} = (D_j)$ for each $j \in [1, b]$, then the expression $|\lambda| + a + B - L$ appearing on the right side of \eqref{innerp1} can be rewritten as $\sum_{i = 1}^{a + b} \big( | \lambda^{(i)} | - \ell ( \lambda^{(i)} ) + 1 \big) $.
\end{rem}

\begin{proof}[Proof of Proposition \ref{innermanyestimate}]
	
As in Definition \ref{innermanyp}, define the set partition $\rho = \big( \rho^{(1)}, \rho^{(2)}, \ldots , \rho^{(a + b)} \big)$ of $\{ 1, 2, \ldots , L + b \}$ as follows. For each integer $i \in [1, a]$, define the partial sum $L_i = \sum_{j = 1}^i \ell_j$ (with $L_0 = 0$); then set $\rho^{(i)} = \{ L_{i - 1} + 1, L_{i - 1} + 2, \ldots , L_i \}$ for each $i \in [1, a]$, and set $\rho^{(j)} = \{ L + j - a \}$ for each $j \in [a + 1, a + b]$.

In view of the definition \eqref{manyinnermany}, we have that
\begin{flalign}
\label{21manyinnerestimate}
\begin{aligned}
\big\langle  p_{\lambda^{(1)}} \b| p_{\lambda^{(2)}} \b| \cdots \b| p_{\lambda^{(a)}} \b| p_{D_1} \b| p_{D_2} \b| \cdots \b| p_{D_b} \big\rangle  = \displaystyle\sum_{\alpha \in \mathscr{C} (\rho)} \displaystyle\prod_{i = 1}^{L - a + 1} \big\langle \omega_{\alpha^{(i)}} \big\rangle,
\end{aligned}
\end{flalign}

\noindent where the sum is over all reduced set partitions $\alpha = \big( \alpha^{(1)}, \alpha^{(2)}, \ldots , \alpha^{(L - a + 1)} \big) \in \mathcal{P}_{L + b}$ that are complementary to $\rho$, and $\omega_{\alpha^{(i)}} \subset \mathbb{Z}_{\ge 1} $ is a set of $\big| \alpha^{(i)} \big|$ integers defined as follows. We stipulate that positive integer $u \in \omega_{\alpha^{(i)}}$ if and only if either $a + j \in \alpha^{(i)}$ and $u = D_j$ for some $j \in [1, b]$ or there exist $j \in [1, a]$ and $k \in \big[ 1, \ell \big(\lambda^{(j)} \big) \big]$ such that $u = \lambda_k^{(j)}$ and $L_{j - 1} + k \in \alpha^{(i)}$. Let $\big| \omega_{\alpha^{(i)}} \big|$ denote the sum of the elements in $\omega_{\alpha^{(i)}}$ for each $i \in [1, L - a + 1]$.

Now let $\alpha = \big( \alpha^{(1)}, \alpha^{(2)}, \ldots , \alpha^{(r)} \big)$ be a reduced set partition complementary to $\rho$. Then, we must have that $r = \ell (\alpha) = L + 1 - a$ due to Definition \ref{partitionscomplement}. Furthermore, each $\alpha^{(i)}$ must contain at least one element from $\{ 1, 2, \ldots , L \}$. Indeed, otherwise, there would exist some $\alpha^{(i)} \subseteq \{ L + 1, L + 2, \ldots , L + b \}$, meaning that both $\alpha$ and $\rho$ would be refinements of $\big( \alpha^{(i)}, \{ 1, 2, \ldots , L + b \} \setminus \alpha^{(i)} \big)$, which is a contradiction.

Now, for any $A = (A_1, A_2, \ldots,  A_{L - a + 1}) \in \mathcal{C}_L (L - a + 1)$ and $B = (B_1, B_2, \ldots , B_{L - a + 1}) \in \mathcal{G}_b (L - a + 1)$, let $\mathfrak{R} (A, B)$ denote the set of non-reduced set partitions of $\alpha = \big( \alpha^{(1)}, \alpha^{(2)}, \ldots , \alpha^{(L - a + 1)} \big) \in \mathfrak{P}_{L + b; L - a + 1}$ satisfying the following three properties. First, we have that $\big|\alpha^{(i)} \cap \{ 1, 2, \ldots , L \} \big| = A_i$; second, that $\big| \alpha^{(i)} \cap \{ L + 1, L + 2, \ldots , L + b \} \big| = B_i$; and third, that $\alpha$ and $\rho$ are transverse, meaning that $\big| \rho^{(i)} \cap \alpha^{(j)} \big| \le 1$ for each $i, j$. Observe that $\mathscr{C} (\rho) \subseteq \bigcup_{A \in \mathcal{C}_L (L - a + 1)} \bigcup_{B \in \mathcal{G}_b (L - a + 1)} \mathfrak{R} (A, B)$ in view of Lemma \ref{transverse}. Further observe that $\big| \omega_{\alpha^{(i)}} \big| \ge A_i + 2 B_i$, since $\omega_{\alpha^{(i)}}$ has $A_i + B_i$ (positive) elements, $B_i$ elements of which are in $\{ D_1, D_2, \ldots , D_b \}$ (and therefore bounded below by $2$).

In view of \eqref{21manyinnerestimate} and the third identity in \eqref{aipksize}, we have that
\begin{flalign}
\label{1manyinnerestimate}
\begin{aligned}
\big\langle p_{\lambda^{(1)}} \b| p_{\lambda^{(2)}} \b| & \cdots \b| p_{\lambda^{(a)}} \b| p_{D_1} \b| p_{D_2} \b| \cdots \b| p_{D_b} \big\rangle \\
& \le \displaystyle\frac{1}{(L - a + 1)!} \displaystyle\sum_{A \in \mathcal{C}_L (L - a + 1)} \displaystyle\sum_{B \in \mathcal{G}_b (L - a + 1)} \displaystyle\sum_{\alpha \in \mathfrak{R} (A; B)} \displaystyle\prod_{i = 1}^{L - a + 1} \langle \omega_{\alpha^{(i)}} \rangle \\
& \le \displaystyle\frac{2^{79 L}}{(L - a + 1)!} \displaystyle\sum_{A \in \mathcal{C}_L (L - a + 1)} \displaystyle\sum_{B \in \mathcal{G}_b (L - a + 1)} \displaystyle\sum_{\alpha \in \mathfrak{R} (A; B)} \displaystyle\prod_{i = 1}^{L - a + 1} \big| \omega_{\alpha^{(i)}} \big|!,
\end{aligned}
\end{flalign}

\noindent where we used \eqref{singleinnermany}, the fact that $\mathfrak{z} (k) \le 4$, Proposition \ref{mestimatelarge}, Proposition \ref{mestimate}, and the fact that the total number of ones among the $\omega_{\alpha^{(i)}}$ is at most equal to $L$. In \eqref{1manyinnerestimate}, $\big| \omega_{\alpha^{(i)}} \big| = \sum_{j \in \omega_{\alpha^{(i)}}} j$ denotes the sum of the elements in $\omega_{\alpha^{(i)}}$.

Now let $\mathfrak{s} \in [1, L - a + 1]$ denote the minimal index such that $A_{\mathfrak{s}} + 2 B_{\mathfrak{s}} = \max_{1 \le i \le L - a + 1} (A_i + 2 B_i)$. Then, apply \eqref{aici1} with the $A_i$ and $C_i$ there equal to our $\big| \omega_{\alpha^{(i)}} \big| - A_i - 2B_i \ge 0$ and $A_i + 2 B_i$, respectively (observe that  we may do since each $D_i \ge 2$). Since $\sum_{i = 1}^{L - a + 1} A_i = L$; $\sum_{i = 1}^{L - a + 1} B_i = b$; and $\sum_{i = 1}^{L - a + 1} \big| \omega_{\alpha^{(i)}} \big| = |\lambda| + B$, this yields
\begin{flalign}
\label{manylambdainnerestimate1}
\begin{aligned}
\big\langle & p_{\lambda^{(1)}} \b| p_{\lambda^{(2)}} \b| \cdots \b| p_{\lambda^{(a)}} \b| p_{D_1} \b| p_{D_2} \b| \cdots \b| p_{D_b} \big\rangle \\
& \le \displaystyle\frac{2^{79 L}}{(L - a + 1)!}  \displaystyle\sum_{A \in \mathcal{C}_L (L - a + 1)} \displaystyle\sum_{B \in \mathcal{G}_b (L - a + 1)} \displaystyle\sum_{\alpha \in \mathfrak{R} (A; B)} \big( |\lambda| + B + A_{\mathfrak{s}} + 2 B_{\mathfrak{s}} - L - 2 b \big)! \\
& \qquad \times \displaystyle\prod_{\substack{1 \le i \le L - a + 1 \\ i \ne \mathfrak{s}}} (A_i + 2 B_i)! \\
& \le \displaystyle\frac{2^{79 L}}{(L - a + 1)!} \displaystyle\sum_{A \in \mathcal{C}_L (L - a + 1)} \displaystyle\sum_{B \in \mathcal{G}_b (L - a + 1)} \binom{L}{A_1, A_2, \ldots , A_{L - a + 1}} \binom{b}{B_1, B_2, \ldots , B_{L - a + 1}} \\
& \qquad \times \big( |\lambda| + B + A_{\mathfrak{s}} + 2 B_{\mathfrak{s}} - L - 2 b \big)! \displaystyle\prod_{\substack{1 \le i \le L - a + 1 \\ i \ne \mathfrak{s}}} (A_i + 2 B_i)!,
\end{aligned}
\end{flalign}

\noindent where we have used the fact that $\big| \mathfrak{R} (A, B) \big| \le \binom{L}{A_1, A_2, \ldots , A_{L - a + 1}} \binom{b}{B_1, B_2, \ldots , B_{L - a + 1}}$. The latter fact holds by first ignoring the transversality condition between $\alpha$ and $\rho$, and then by using the second identity in \eqref{aipksize}, which implies that there are at most $\binom{L}{A_1, A_2, \ldots , A_{L - a + 1}}$ possibilities for $A$ and at most $\binom{b}{B_1, B_2, \ldots , B_{L - a + 1}}$ possibilities for $B$.

 Observe that since $|\lambda| + B \ge L + 2b$, we have that
\begin{flalign}
\label{productlambda1}
\begin{aligned}
\big( |\lambda| + B + A_{\mathfrak{s}} + 2 B_{\mathfrak{s}} - L - 2 b \big)! & = \big( |\lambda| + a + B - L \big)! \displaystyle\prod_{i = 0}^{2b + a - A_{\mathfrak{s}} - 2B_{\mathfrak{s}} - 1} \displaystyle\frac{1}{|\lambda| + a + B - L - i} \\
& \le \big( |\lambda| + a + B - L \big)! \displaystyle\prod_{i = 0}^{2b + a - A_{\mathfrak{s}} - 2B_{\mathfrak{s}} - 1} \displaystyle\frac{1}{a + 2b - i} \\
& = \displaystyle\frac{\big( |\lambda| + a + B - L \big)! (A_{\mathfrak{s}} + 2 B_{\mathfrak{s}})!}{(a + 2b)!}.
\end{aligned}
\end{flalign}

\noindent Inserting \eqref{productlambda1} into \eqref{manylambdainnerestimate1}, applying \eqref{habl}, and using the fact that $L! = a! (L - a)! \binom{L}{a} \le 2^L a! (L - a + 1)!$, we obtain
\begin{flalign*}
\big\langle & p_{\lambda^{(1)}} \b| p_{\lambda^{(2)}} \b| \cdots \b| p_{\lambda^{(a)}} \b| p_{D_1} \b| p_{D_2} \b| \cdots \b| p_{D_b} \big\rangle \\
& \le \displaystyle\frac{2^{79 L} \big( |\lambda| + a + B - L \big)! L! b!}{(a + 2b)! (L - a + 1)!} \displaystyle\sum_{A \in \mathcal{C}_L (L - a + 1)} \displaystyle\sum_{B \in \mathcal{G}_b (L - a + 1)} \displaystyle\prod_{i = 1}^{L - a + 1} \displaystyle\frac{ (A_i + 2 B_i)!}{A_i! B_i!} \\
& \le \displaystyle\frac{2^{88 L + 5} \big( |\lambda| + a + B - L \big)! L! }{a! (L - a + 1)!} \le 8^{88 L + 5} \big( |\lambda| + a + B - L \big)! \binom{L}{a} \le 2^{89 L + 5} \big( |\lambda| + a + B - L \big)!,
\end{flalign*}

\noindent from which the proposition follows.
\end{proof}

If at least one of the $\lambda^{(i)}$ has at least two parts that are at least equal to two, then the following proposition indicates that it is possible to improve upon the bound of Proposition \ref{innermanyestimate}. 	

\begin{prop}

\label{innermanyestimate2}

Adopt the notation of Proposition \ref{innermanyestimate} and additionally suppose that there exists some $j_0 \in [1, a]$ such that at least two parts of $\lambda^{(j_0)}$ are at least $2$. Then,
\begin{flalign*}
\Big| \langle p_{\lambda^{(1)}} \b| p_{\lambda^{(2)}} \b| \cdots \b| p_{\lambda^{(a)}} \b| p_{D_1} \b| p_{D_2} \b| \cdots \b| p_{D_b} \rangle \Big|  \le 2^{89 L + 5} \big( |\lambda| + a + B - L - 1 \big)!.
\end{flalign*}

\end{prop}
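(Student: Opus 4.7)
The plan is to closely mirror the proof of Proposition \ref{innermanyestimate}, replacing the two factorial estimates \eqref{aici1} and \eqref{habl} by their refinements \eqref{2aici} and \eqref{gabl}; together these refinements will save a factor of $|\lambda| + a + B - L$. As in the earlier proof, I would first expand $\langle p_{\lambda^{(1)}} \b| \cdots \b| p_{D_b} \rangle$ via Definition \ref{innermanyp}, bound each single inner product $\langle \omega_{\alpha^{(i)}} \rangle$ by $2^{79 L} |\omega_{\alpha^{(i)}}|!$ using Lemma \ref{mestimatelarge} and Proposition \ref{mestimate}, and restrict attention to the nonreduced set partitions $\alpha \in \mathfrak{R}(A, B)$ transverse to $\rho$ furnished by Lemma \ref{transverse}. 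Setting $E_i = |\omega_{\alpha^{(i)}}| - A_i - 2 B_i \ge 0$, one has $\sum_i E_i = |\lambda| + B - L - 2b$ as before.

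The crucial new input is the following observation. If $\lambda_{k_1}^{(j_0)}, \lambda_{k_2}^{(j_0)} \ge 2$ are two parts of $\lambda^{(j_0)}$ that are at least $2$, then the corresponding positions $L_{j_0 - 1} + k_1, L_{j_0 - 1} + k_2$ both lie in $\rho^{(j_0)}$; by transversality (Lemma \ref{transverse}), they must lie in distinct components $\alpha^{(i_1)} \ne \alpha^{(i_2)}$. Each such position contributes at least $2$, rather than just $1$, to $|\omega_{\alpha^{(i)}}|$, so $E_{i_1}, E_{i_2} \ge 1$. This verifies the ``two positive $A_i$'' hypothesis of \eqref{2aici}, which I would apply with its $A_i$ equal to $E_i$ and its $C_i$ equal to $A_i + 2 B_i$, yielding
\begin{equation*}
\prod_{i = 1}^{L - a + 1} |\omega_{\alpha^{(i)}}|! \le \big( |\lambda| + B - L - 2b + A_{\mathfrak{s}} + 2 B_{\mathfrak{s}} - 1 \big)! \, (A_{\mathfrak{h}} + 2 B_{\mathfrak{h}} + 1)! \prod_{\substack{1 \le i \le L - a + 1 \\ i \ne \mathfrak{s}, \mathfrak{h}}} (A_i + 2 B_i)!,
\end{equation*}
where $\mathfrak{s}, \mathfrak{h}$ are as in Lemma \ref{hestimate}. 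The same hypothesis also forces $|\lambda| \ge L + 2$, so repeating the derivation of \eqref{productlambda1} mutatis mutandis produces
\begin{equation*}
\big( |\lambda| + B - L - 2b + A_{\mathfrak{s}} + 2 B_{\mathfrak{s}} - 1 \big)! \le \frac{(|\lambda| + a + B - L - 1)! (A_{\mathfrak{s}} + 2 B_{\mathfrak{s}})!}{(a + 2b)!},
\end{equation*}
which is precisely the step where the extra factor of $|\lambda| + a + B - L$ is saved relative to the previous proof.

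After bounding $|\mathfrak{R}(A, B)|$ by a product of multinomial coefficients as before, the resulting sum over $A \in \mathcal{C}_L(L - a + 1)$ and $B \in \mathcal{G}_b(L - a + 1)$ reduces to $\sum_A \sum_B (A_{\mathfrak{h}} + 2 B_{\mathfrak{h}} + 1) \prod_{i = 1}^{L - a + 1} \frac{(A_i + 2 B_i)!}{A_i! B_i!}$, which is controlled by \eqref{gabl} in place of \eqref{habl} (both estimates sharing the same right-hand side). The remaining manipulations, which amount to cancelling multinomial coefficients and using $\binom{L}{a} \le 2^L$, proceed identically to the proof of Proposition \ref{innermanyestimate} and deliver the claimed bound $2^{89 L + 5} (|\lambda| + a + B - L - 1)!$. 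The main obstacle is verifying that two of the $E_i$ are strictly positive so that \eqref{2aici} legitimately applies; once this transversality consequence is secured via Lemma \ref{transverse}, the remainder of the argument is essentially a transcription of the proof of Proposition \ref{innermanyestimate} with the sharper ingredients from Lemma \ref{aicisum} and Lemma \ref{hestimate} substituted in.
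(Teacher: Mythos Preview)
Your proposal is correct and follows essentially the same route as the paper's proof: both use the transversality from Lemma \ref{transverse} to place the two large parts of $\lambda^{(j_0)}$ into distinct components of $\alpha$, thereby guaranteeing two positive $E_i$, and then replace \eqref{aici1} and \eqref{habl} by their refinements \eqref{2aici} and \eqref{gabl} to gain the extra factor. The only cosmetic difference is that the paper records $|\lambda| \ge L + 1$ (which suffices since $B \ge 2b$) whereas you note the slightly stronger $|\lambda| \ge L + 2$.
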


\begin{proof}
	The proof of this proposition will be similar to that of Proposition \ref{innermanyestimate}, except that we will be able to use the existence of some $j_0$ such that $\lambda^{(j_0)}$ has two parts not equal to $1$ to improve the estimate \eqref{manylambdainnerestimate1}.
	
	To explain further, we begin in the same way as we did in the proof of Proposition \ref{innermanyestimate}; in particular, adopt the notation of that proof. Then, the estimate \eqref{1manyinnerestimate} still holds.
	
	Now let $\mathfrak{s} \in [1, L - a + 1]$ denote the minimal index such that $A_{\mathfrak{s}} + 2 B_{\mathfrak{s}} = \max_{1 \le i \le L - a + 1} (A_i + 2 B_i)$, and let $\mathfrak{h} \in [1, L - a + 1]$ denote the minimal index such that $A_{\mathfrak{h}} + 2 B_{\mathfrak{h}} = \max_{i \ne \mathfrak{s}} (A_i + 2 B_i)$; in particular, $\mathfrak{h}$ is an index such that $A_{\mathfrak{h}} + 2 B_{\mathfrak{h}}$ is second largest among all $A_i + 2 B_i$. Set $\mathfrak{A}_i = \big| \omega_{\alpha^{(i)}} \big| - A_i - 2B_i$ and $\mathfrak{C}_i = A_i + 2 B_i$ for each $i \in [1, L - a + 1]$; since each $D_i \ge 2$, each $\mathfrak{A}_i$ is nonnegative.
	
	Furthermore, since $\rho$ and $\alpha$ are transverse, there exist two distinct indices $u, v \in [1, L - a + 1]$ such that $\lambda_1^{(j_0)} \in \omega_{\alpha^{(u)}}$ and $\lambda_2^{(j_0)} \in \omega_{\alpha^{(v)}}$. Since $\lambda_1^{(j_0)} \ge \lambda_2^{(j_0)} \ge 2$, it follows that $\big| \omega_{\alpha^{(u)}} \big| \ge A_u + 2 B_u + 1$ and $\big| \omega_{\alpha^{(v)}} \big| \ge A_v + 2 B_v + 1$. Therefore, $\mathfrak{A}_u$ and $\mathfrak{A}_v$ are positive, so applying \eqref{1manyinnerestimate}, \eqref{2aici} (with the $A_i$ and $C_i$ there equal to the $\mathfrak{A}_i$ and $\mathfrak{C}_i$ here, respectively) and using the facts that $\sum_{i = 1}^{L - a + 1} A_i = L$; $\sum_{i = 1}^{L - a + 1} B_i = b$; and $\sum_{i = 1}^{L - a + 1} \big| \omega_{\alpha^{(i)}} \big| = |\lambda| + B$ yields
	\begin{flalign}
	\label{manylambdainnerestimate2}
	\begin{aligned}
	\big\langle & p_{\lambda^{(1)}} \b| p_{\lambda^{(2)}} \b| \cdots \b| p_{\lambda^{(a)}} \b| p_{D_1} \b| p_{D_2} \b| \cdots \b| p_{D_b} \big\rangle \\
	& \le \displaystyle\frac{2^{79 L}}{(L - a + 1)!}  \displaystyle\sum_{A \in \mathcal{C}_L (L - a + 1)} \displaystyle\sum_{B \in \mathcal{G}_b (L - a + 1)} \displaystyle\sum_{\alpha \in \mathfrak{R} (A; B)} \big( |\lambda| + B + A_{\mathfrak{s}} + 2 B_{\mathfrak{s}} - L - 2 b - 1 \big)! \\
	& \qquad \times (A_{\mathfrak{h}} + 2 B_{\mathfrak{h}} + 1)! \displaystyle\prod_{\substack{1 \le i \le L - a + 1 \\ i \ne \mathfrak{s}, \mathfrak{h}}} (A_i + 2 B_i)! \\
	& \le \displaystyle\frac{2^{79 L}}{(L - a + 1)!} \displaystyle\sum_{A \in \mathcal{C}_L (L - a + 1)} \displaystyle\sum_{B \in \mathcal{G}_b (L - a + 1)} \binom{L}{A_1, A_2, \ldots , A_{L - a + 1}} \binom{b}{B_1, B_2, \ldots , B_{L - a + 1}} \\
	& \qquad \times \big( |\lambda| + B + A_{\mathfrak{s}} + 2 B_{\mathfrak{s}} - L - 2 b - 1 \big)! (A_{\mathfrak{h}} + 2 B_{\mathfrak{h}} + 1)  \displaystyle\prod_{\substack{1 \le i \le L - a + 1 \\ i \ne \mathfrak{s}}} (A_i + 2 B_i)!.
	\end{aligned}
	\end{flalign}
	
	\noindent Observe that since at least one partition in $\lambda$ has at least two parts equal to $2$, we have that $|\lambda| \ge L + 1$; moreover, since each $D_i \ge 2$, we also have that $B \ge 2b$. Therefore $|\lambda| + B - 1 \ge L + 2b$, and so
	\begin{flalign}
	\label{productlambda2}
	\begin{aligned}
	\big( |\lambda| + B + A_{\mathfrak{s}} & + 2 B_{\mathfrak{s}} - L - 2 b - 1 \big)! \\
	& = \big( |\lambda| + a + B - L - 1 \big)! \displaystyle\prod_{i = 0}^{2b + a - A_{\mathfrak{s}} - 2B_{\mathfrak{s}} - 1} \displaystyle\frac{1}{|\lambda| + a + B - L - i - 1} \\
	& \le \big( |\lambda| + a + B - L - 1 \big)! \displaystyle\prod_{i = 0}^{2b + a - A_{\mathfrak{s}} - 2B_{\mathfrak{s}} - 1} \displaystyle\frac{1}{a + 2b - i} \\
	& \le \displaystyle\frac{\big( |\lambda| + a + B - L - 1\big)! (A_{\mathfrak{s}} + 2 B_{\mathfrak{s}})!}{(a + 2b)!}.
	\end{aligned}
	\end{flalign}
	
	\noindent Inserting \eqref{productlambda2} into \eqref{manylambdainnerestimate2}, applying \eqref{gabl} and using the fact that $\frac{L!}{a! (L - a + 1)!} \le \binom{L}{a} \le 2^L$, we find that
	\begin{flalign*}
	\big\langle & p_{\lambda^{(1)}} \b| p_{\lambda^{(2)}} \b| \cdots \b| p_{\lambda^{(a)}} \b| p_{D_1} \b| p_{D_2} \b| \cdots \b| p_{D_b} \big\rangle \\
	& \le \displaystyle\frac{2^{79 L} \big( |\lambda| + a + B - L - 1 \big)! L! b!}{(a + 2b)! (L - a + 1)!} \\
	& \qquad \times \displaystyle\sum_{A \in \mathcal{C}_L (L - a + 1)} \displaystyle\sum_{B \in \mathcal{G}_b (L - a + 1)} (A_{\mathfrak{h}} + 2 B_{\mathfrak{h}} + 1)  \displaystyle\prod_{i = 1}^{L - a + 1} \displaystyle\frac{ (A_i + 2 B_i)!}{A_i! B_i!} \\
	& \le 2^{88 L + 5} \big( |\lambda| + a + B - L - 1 \big)! \displaystyle\frac{L!}{a! (L - a + 1)!} \le 2^{89 L + 5} \big( |\lambda| + a + B - L - 1 \big)!,
	\end{flalign*}
	
	\noindent from which we deduce the proposition.
\end{proof}

\subsection{Estimating \texorpdfstring{$\textbf{c} (m)$}{}}

\label{EstimateC}

Using Lemma \ref{mestimatelarge}, Proposition \ref{innermanyestimate}, and Proposition \ref{innermanyestimate2}, we can now establish Theorem \ref{volume}.

\begin{proof}[Proof of Theorem \ref{volume}]
	
Recalling the fact that $\mathcal{F}_k = k \mathfrak{f}_k$ and
the definition \eqref{kf} of $\mathfrak{f}_k$, we deduce that
\begin{flalign}
\label{mifinner}
\begin{aligned}
 \big\langle \mathcal{F}_{m_1} & \b| \mathcal{F}_{m_2} \b| \cdots \b| \mathcal{F}_{m_n} \big\rangle \\
& = \Bigg\langle \displaystyle\sum_{\wt (\lambda^{(1)}) = m_1 + 1} \displaystyle\frac{(-m_1)^{\ell (\lambda^{(1)}) - 1}}{\prod_{i = 1}^{\infty} M_i \big(\lambda^{(1)} \big) !} p_{\lambda^{(1)}} \Bigg| \cdots \Bigg|  \displaystyle\sum_{\wt (\lambda^{(n)}) = m_n + 1} \displaystyle\frac{(-m_n)^{\ell (\lambda^{(n)}) - 1}}{\prod_{i = 1}^{\infty} M_i \big(\lambda^{(n)} \big) !} p_{\lambda^{(n)}} \Bigg\rangle \\
& = \displaystyle\sum_{\wt (\lambda^{(1)}) = m_1 + 1} \cdots \displaystyle\sum_{\wt (\lambda^{(n)}) = m_n + 1} \big\langle p_{\lambda^{(1)}} \b| p_{\lambda^{(2)}} \b| \cdots \b| p_{\lambda^{(n)}} \big\rangle \displaystyle\prod_{j = 1}^n \displaystyle\frac{(-m_j)^{\ell (\lambda^{(j)}) - 1}}{\prod_{i = 1}^{\infty} M_i \big(\lambda^{(j)} \big) !}.
\end{aligned}
\end{flalign}

Now let us rewrite the right side of \eqref{mifinner}. For each integer $1 \le j \le n$, set $l_j = \ell \big(\lambda^{(j)} \big)$, and denote $r = \sum_{j = 1}^n l_j \in \big[ n, |m| \big]$. Then \eqref{mifinner} can be alternatively expressed as
\begin{flalign}
\label{mifinner2}
\begin{aligned}
\big\langle & \mathcal{F}_{m_1}  \b| \mathcal{F}_{m_2} \b| \cdots \b| \mathcal{F}_{m_n} \big\rangle \\
& = \displaystyle\sum_{r = n}^{|m|} \displaystyle\sum_{l \in \mathcal{C}_r (n)} \displaystyle\sum_{\substack{ \ell (\lambda^{(1)}) = l_1 \\ |\lambda^{(1)}| = m_1 - l_1 + 1}} \cdots \displaystyle\sum_{\substack{\ell (\lambda^{(n)}) = l_n \\ |\lambda^{(n)}| = m_n - l_n + 1 }}  \big\langle p_{\lambda^{(1)}} \b| p_{\lambda^{(2)}} \b| \cdots \b| p_{\lambda^{(n)}} \big\rangle \displaystyle\prod_{j = 1}^n \displaystyle\frac{(-m_j)^{l_j - 1}}	{\prod_{i = 1}^{\infty} M_i \big(\lambda^{(j)} \big) !}.
\end{aligned}
\end{flalign}

There is one $l = (l_1, l_2, \ldots , l_n) \in \mathcal{C}_r (n)$ when $r = n$, namely $l = 1^n$. Thus, if $r = n$, we must have that each $l_i = 1$, so that $\lambda^{(i)} = (m_i)$ for each $1 \le i \le n$. The corresponding summand is then $\big\langle p_{m_1} \b| p_{m_2} \b| \cdots \b| p_{m_n} \big\rangle$. Subtracting this term from both sides of \eqref{mifinner2} yields
\begin{flalign}
\label{mifinner3}
\begin{aligned}
\Big| \big\langle & \mathcal{F}_{m_1}  \b| \mathcal{F}_{m_2} \b| \cdots \b| \mathcal{F}_{m_n} \big\rangle - \big\langle p_{m_1} \b| p_{m_2} \b| \cdots \b| p_{m_n} \big\rangle \Big| \\
& = \Bigg| \displaystyle\sum_{r = n + 1}^{|m|} \displaystyle\sum_{l \in \mathcal{C}_r (n)} \displaystyle\sum_{\substack{ \ell (\lambda^{(1)}) = l_1 \\ |\lambda^{(1)}| = m_1 - l_1 + 1}} \cdots \displaystyle\sum_{\substack{\ell (\lambda^{(n)}) = l_n \\ |\lambda^{(n)}| = m_n - l_n + 1 }}  \big\langle p_{\lambda^{(1)}} \b| p_{\lambda^{(2)}} \b| \cdots \b| p_{\lambda^{(n)}} \big\rangle \displaystyle\prod_{j = 1}^{|m|} \displaystyle\frac{(-m_j)^{l_j - 1}}{\prod_{i = 1}^{\infty} M_i \big(\lambda^{(j)} \big) !} \Bigg| \\
& \le  \displaystyle\sum_{r = n + 1}^{|m|} \displaystyle\sum_{l \in \mathcal{C}_r (n)} \displaystyle\sum_{\substack{ \ell (\lambda^{(1)}) = l_1 \\ |\lambda^{(1)}| = m_1 - l_1 + 1}} \cdots \displaystyle\sum_{\substack{\ell (\lambda^{(n)}) = l_n \\ |\lambda^{(n)}| = m_n - l_n + 1 }}  \big\langle p_{\lambda^{(1)}} \b| p_{\lambda^{(2)}} \b| \cdots \b| p_{\lambda^{(n)}} \big\rangle \displaystyle\prod_{j = 1}^n \displaystyle\frac{m_j^{l_j - 1}}{\prod_{i = 1}^{\infty} M_i \big(\lambda^{(j)} \big) !},
\end{aligned}
\end{flalign}

\noindent where in the inequality we removed the signs (which will be irrelevant in the estimates to follow). 

To proceed, we will divide the sum on the right side of \eqref{mifinner2} into two parts; the first will consist of ``exceptional'' sequences of partitions $\lambda = \big( \lambda^{(1)}, \lambda^{(2)}, \ldots , \lambda^{(n)} \big)$, in which all of the $\lambda^{(i)}$ are of a specific form $\xi (k, l)$ to be defined below. The second will consist of all of the remaining sequences of partitions.

More specifically, for any nonnegative integers $k \ge s \ge 1$, define $\xi^{(k, s)} = (k - 2 s + 2, 1^{s - 1}) \in \mathbb{Y}_{k - s + 1}  (s)$ denote the partition with one part equal to $k - 2 s + 2$ and $s - 1$ parts equal to one. For any sequence $l = (l_1, l_2, \ldots , l_n)$, let $\Omega (l) = \Omega (l, m)$ denote the set of sequences of partitions $\lambda = \big(\lambda^{(1)}, \lambda^{(2)}, \ldots , \lambda^{(n)} \big)$ such that $\big| \lambda^{(i)} \big| = m_i - l_i + 1$; such that $\ell \big( \lambda^{(i)} \big) = l_i$ for each $i \in [1, n]$; and such that there exists an $j \in [1, n]$ such that $\lambda^{(j)}$ is not of the form $\xi^{(k, s)}$ for any integers $k \ge s \ge 1$. The latter condition is equivalent to stipulating that there exists a $j \in [1, n]$ such that $\lambda^{(j)}$ has at least two parts equal to two.

In view of \eqref{mifinner3}, we have that
\begin{flalign}
\label{mifinner4}
\begin{aligned}
\Big| \big\langle & \mathcal{F}_{m_1}  \b| \mathcal{F}_{m_2} \b| \cdots \b| \mathcal{F}_{m_n} \big\rangle - \big\langle p_{m_1} \b| p_{m_2} \b| \cdots \b| p_{m_n} \big\rangle \Big| \le \mathfrak{E}_1 + \mathfrak{E}_2,
\end{aligned}
\end{flalign}

\noindent where
\begin{flalign}
\label{e1e2inner}
\begin{aligned}
\mathfrak{E}_1 & =   \displaystyle\sum_{r = n + 1}^{|m|} \displaystyle\sum_{l \in \mathcal{C}_r (n)}   \big\langle p_{\xi^{(m_1, l_1)}} \b| p_{\xi^{(m_2, l_2)}} \b| \cdots \b| p_{\xi^{(m_n, l_n)}} \big\rangle \displaystyle\prod_{j = 1}^n \displaystyle\frac{m_j^{l_j - 1}}{(l_j - 1) !}; \\
\mathfrak{E}_2 & =  \displaystyle\sum_{r = n + 1}^{|m|} \displaystyle\sum_{l \in \mathcal{C}_r (n)} \displaystyle\prod_{j = 1}^n m_j^{l_j - 1} \displaystyle\sum_{\lambda \in \Omega (l)}  \big\langle p_{\lambda^{(1)}} \b| p_{\lambda^{(2)}} \b| \cdots \b| p_{\lambda^{(n)}} \big\rangle \displaystyle\prod_{j = 1}^n \displaystyle\frac{1}{\prod_{i = 1}^{\infty} M_i \big(\lambda^{(j)} \big) !}.
\end{aligned}
\end{flalign}

To estimate $\mathfrak{E}_1$, let $l = (l_1, l_2, \ldots , l_n) \in \mathcal{C}_r (n)$ with $k$ of the $l_i$ equal to $1$ (and the remaining $n - k$ of the $l_i$ at least equal to $2$). Since $\frac{l_i}{2} \le l_i - 1$ when $l_i \ge 2$, we have that $n + \frac{r - k}{2} \le n + \sum_{i = 1}^n (l_i - 1) = \sum_{i = 1}^n l_i = r$, and so $r - k \le 2(r - n)$. Since each $m_i \ge 2$, we can apply Proposition \ref{innermanyestimate} with the $a$ there equal to our $n - k$, the $b$ there equal to our $k$, the $L$ there equal to our $r - k$, the $\{ \lambda^{(i)} \}$ there equal to our $\big\{ \xi^{(m_i, l_i)} \big\}_{l_i \ge 2}$, and the $\{ D_i \}$ there equal to our $\{ m_i \}_{l_i = 1}$. Using the facts that $\sum_{i = 1}^n l_i = r$; $r - k \le 2 (r - n)$; and $\sum_{i = 1}^n \big| \lambda^{(i)} \big| = |m| - r + n$, this proposition yields
\begin{flalign}
\label{pxiinnerestimate}
\big\langle p_{\xi^{(m_1, l_1)}} \b| p_{\xi^{(m_2, l_2)}} \b| \cdots \b| p_{\xi^{(m_n, l_n)}} \big\rangle \le 2^{89 (r - k) + 5} \big( |m| - 2r + 2n \big)! \le 2^{178 (r - n) + 5} \big( |m| - 2r + 2n \big)!.
\end{flalign}

\noindent Inserting \eqref{pxiinnerestimate} into the definition \eqref{e1e2inner} of $\mathfrak{E}_1$, and then applying the fact (since $\sum_{i = 1}^n m_i = |m|$ and $\sum_{i = 1}^n (l_i - 1) = r - n$) that
\begin{flalign*}
\displaystyle\sum_{l \in \mathcal{C}_r (n)} \displaystyle\prod_{i = 1}^n \displaystyle\frac{m_i^{l_i - 1}}{(l_i - 1)!} = \displaystyle\frac{|m|^{r - n}}{(r - n)!},
\end{flalign*}

\noindent yields
\begin{flalign*}
\mathfrak{E}_1 & \le 32 \displaystyle\sum_{r = n + 1}^{|m|}  \displaystyle\sum_{l \in \mathcal{C}_r (n)}  2^{178 (r - n)} \big( |m| - 2 r + 2n \big)! \displaystyle\prod_{i = 1}^n \displaystyle\frac{m_i^{l_i - 1}}{(l_i - 1)!} \\
& = 32 \displaystyle\sum_{r = n + 1}^k \displaystyle\frac{2^{178 (r - n) } \big( |m| - 2r + 2n \big)! |m|^{r - n}}{(r - n)!}.
\end{flalign*}

\noindent Using the first estimate in \eqref{kaestimate} and the fact that $r > n$, we deduce that $\big( |m| - 2r + 2n \big)! |m|^{r - n} \le 2^{8 (r - n)} \big( |m| - 1 \big)!$, from which it follows that
\begin{flalign}
\label{e1estimateinner}
\begin{aligned}
\mathfrak{E}_1 & \le 32 \big( |m| - 1 \big)! \displaystyle\sum_{r = n + 1}^k \displaystyle\frac{2^{186 (r - n)}}	{(r - n)!} \le 32 e^{2^{186}} \big( |m| - 1 \big) ! \le 2^{2^{187} } \big( |m| - 1 \big)!.
\end{aligned}
\end{flalign}

 Next we estimate $\mathfrak{E}_2$. Recall that for each $\lambda = \big( \lambda^{(1)}, \lambda^{(2)}, \ldots , \lambda^{(n)} \big) \in \Omega (l)$ there exists some $j \in [1, n]$ such that $\lambda^{(j)}$ has at least two parts that are at least equal to two. Therefore, if $k$ of the $\lambda^{(i)}$ have length one, we can apply Proposition \ref{innermanyestimate2} with the $a$ there equal to our $n - k$, the $b$ there equal to our $k$, the $L$ there equal to our $r - k$, the $\{ \lambda^{(i)} \}$ there equal to our $\big\{ \lambda^{(i)} \big\}_{l_i \ge 2}$, and the $\{ D_i \}$ there equal to our $\{ m_i \}_{l_i = 1}$. This yields
\begin{flalign}
\label{plambdainnerestimatemany}
\displaystyle\max_{\lambda \in \Omega (l)} \big\langle p_{\lambda^{(1)}} \b| p_{\lambda^{(2)}} \b| \cdots \b| p_{\lambda^{(n)}} \big\rangle 	\le 2^{89 (r - k) + 5} \big( |m| - 2r + 2n - 1 \big)! \le 2^{178 (r - n) + 5} \big( |m| - 2r + 2n - 1\big)!.
\end{flalign}

\noindent Inserting \eqref{plambdainnerestimatemany} into the definition \eqref{e1e2inner} of $\mathfrak{E}_2$, we find that
\begin{flalign*}
\mathfrak{E}_2 & \le \displaystyle\sum_{r = n + 1}^{|m|} \displaystyle\sum_{l \in \mathcal{C}_r (n)}  \bigg( \displaystyle\max_{\lambda \in \Omega (l)}  \big\langle p_{\lambda^{(1)}} \b| p_{\lambda^{(2)}} \b| \cdots \b| p_{\lambda^{(n)}} \big\rangle \bigg) \displaystyle\prod_{j = 1}^n m_j^{l_j - 1} \displaystyle\sum_{\lambda \in \Omega (l)} \displaystyle\prod_{j = 1}^n \displaystyle\frac{1}{\prod_{i = 1}^{\infty} M_i \big(\lambda^{(j)} \big) !} \\
& \le 32 \displaystyle\sum_{r = n + 1}^k 2^{178 (r - n)} \big( |m| - 2r + 2n - 1 \big)! \displaystyle\sum_{l \in \mathcal{C}_r (n)} \displaystyle\prod_{j = 1}^n m_j^{l_j - 1} \displaystyle\prod_{j = 1}^n \displaystyle\sum_{\lambda^{(j)} \in \mathbb{Y}_{m_j - l_j + 1} (l_i)} \displaystyle\frac{1}{\prod_{i = 1}^{\infty} M_i \big(\lambda^{(j)} \big) !}  \\
& = 32 \displaystyle\sum_{r = n + 1}^k 2^{178 (r - n)} \big( |m| - 2r + 2n - 1 \big)! \displaystyle\sum_{l \in \mathcal{C}_r (n)} \displaystyle\prod_{j = 1}^n  \displaystyle\frac{m_j^{l_j - 1}}{l_j!} \binom{m_j - l_j}{l_j - 1},
\end{flalign*}

\noindent where to establish the last equality we used \eqref{mlambda}. Therefore, since $\binom{m_j - l_j}{l_j - 1} \le \frac{m_j^{l_j - 1}}{(l_j - 1)!}$, $(2 l_j - 2)! \le (l_j - 1)! l_j! \binom{2 l_j - 2}{l_j - 1} \le 2^{2(l_j - 1)} (l_j - 1)! l_j!$, and $\sum_{j = 1}^n (l_j - 1) = r - n$, we obtain
\begin{flalign*}
\mathfrak{E}_2 & \le 32 \displaystyle\sum_{r = n + 1}^k 2^{180 (r - n)}  \big( |m| - 2r + 2n - 1 \big)! \displaystyle\sum_{l \in \mathcal{C}_r (n)}  \displaystyle\prod_{j = 1}^n \displaystyle\frac{m_j^{2 l_j - 2}}{(2 l_j - 2)!}  \\
& \le 32 \displaystyle\sum_{r = n + 1}^k \displaystyle\frac{2^{180 (r - n)} |m|^{2r - 2n} \big( |m| - 2r + 2n - 1 \big)!}{(2r - 2n)!}
\end{flalign*}

\noindent where we have applied Lemma \ref{sumsb}. Applying the second estimate in \eqref{kaestimate} then implies
\begin{flalign}
\label{e2estimateinner}
\mathfrak{E}_2  \le 32 \big( |m| - 1 \big)! \displaystyle\sum_{r = n + 1}^k \displaystyle\frac{2^{188 (r - n)}}{(2r - 2n)!}   \le 64 e^{2^{188}} \big( |m| - 1 \big)! \le 2^{2^{189}} \big( |m| - 1 \big)!.
\end{flalign}

 Now from \eqref{mifinner4}, \eqref{e1estimateinner}, \eqref{e2estimateinner}, the definition \eqref{singleinnermany} of the inner product $\big\langle p_{m_1} \b| p_{m_2} \b| \cdots \b| p_{m_n} \big\rangle$, and Lemma \ref{mestimatelarge} (using the fact that $m$ has no parts equal to one), we deduce that
 \begin{flalign}
 \label{fmiestimate}
 \Big| \big\langle & \mathcal{F}_{m_1}  \b| \mathcal{F}_{m_2} \b| \cdots \b| \mathcal{F}_{m_n} \big\rangle - |m|! \mathfrak{z} \big( |m| - n + 2 \big)  \Big| \le 2^{2^{190}} \big( |m| - 1 \big)!.
 \end{flalign}

 \noindent Thus the theorem, with the $C$ there equal to $2^{2^{191}} < 2^{2^{200}}$, follows from \eqref{fmiestimate} and the fact (which holds due to the first and last estimates in \eqref{2ll}) that $\Big| \mathfrak{z} \big( |m| - n + 2 \big) - 2 \Big| \le \frac{8}{|m| - n} \le \frac{16}{|m|}$.
\end{proof}

\appendix

	\section*{Appendix: Asymptotic values of Siegel--Veech constants \\ By Anton Zorich}
	
	\label{Constants}

	\setcounter{equation}{0}	
	
	\counterwithout{equation}{section}

	%------------------------------------------------------------
	\subsection*{Siegel--Veech constants.}
\begin{figure}[ht]
% Breaking up a zero.
%

\includegraphics[height=4cm, width=5.5cm, angle = -90]{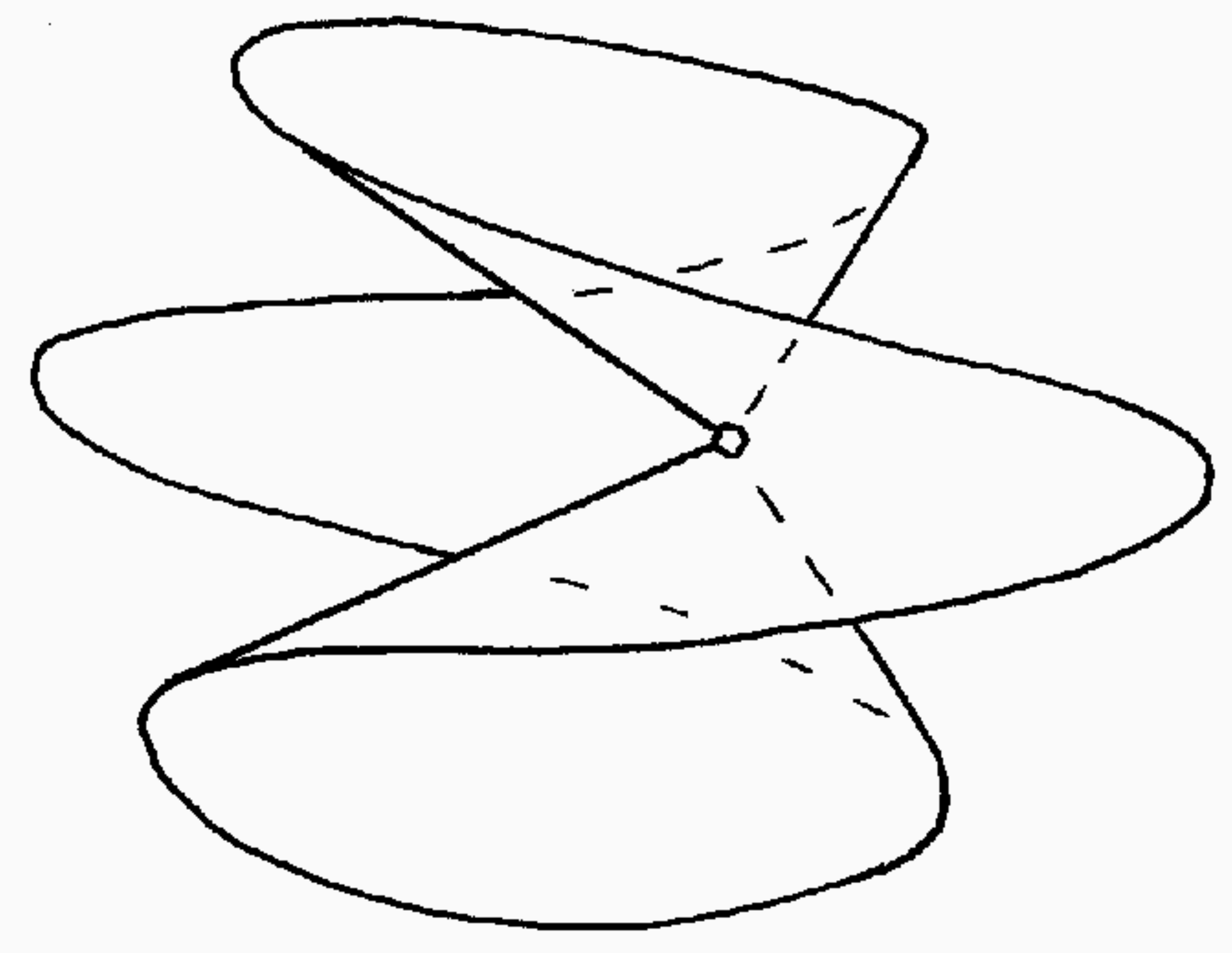} \qquad \qquad  \qquad	 \includegraphics[, height=4cm, width=5.5cm, angle = -90]{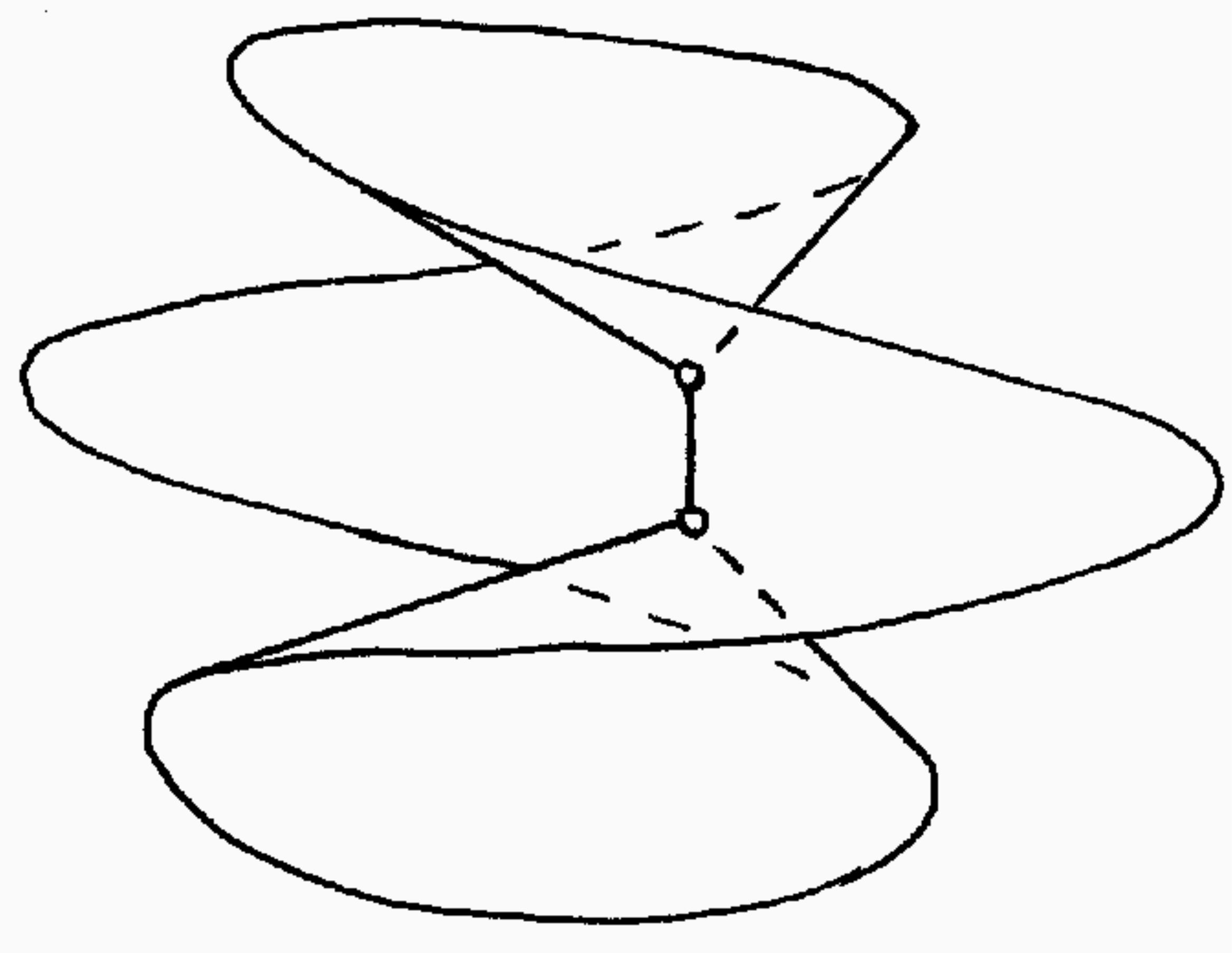}
\caption{
\label{pic:degsad}
Saddle point with cone angle $6\pi$ on the left and two saddle points
with cone angles $4\pi$ on the right.
}
\end{figure}

A holomorphic $1$-form $\omega$ on a Riemann surface defines a canonical flat metric with conical singularities located at the zeroes of $\omega$. Namely,
in the complement of a finite collection of zeroes of $\omega$, the form
$\omega$ can be represented in an appropriate local holomorphic coordinate $z$ as $\omega=dz$. 
In the associated real coordinates $(x,y)$, such that $z=x+\mathrm{i}y$, the flat metric
has the form $dx^2+dy^2$.
The cone angle of the resulting flat metric at a zero of $\omega$ of degree $m$ is $2\pi(m+1)$.
The conical singularities
are often called \textit{saddle points} or just \textit{saddles}.
Figure~\ref{pic:degsad} illustrates
a saddle point associated to a zero of degree two of the $1$-form in the left
picture and two distinct saddle points
associated to two simple zeroes  of the $1$-form in the right picture (see Figure~3 in~\cite{PBC} for more details on breaking a zero into two).
In certain situations it is convenient to interpret a regular
marked point on a translation surface as a saddle point.

The resulting flat metric has trivial linear holonomy: the parallel transport 
of a tangent vector along any closed loop on the Riemann surface brings the vector to itself. 
Note that the holomorphic $1$-form $\omega$ also defines the distinguished \textit{vertical direction} (direction of $y$-axes
in flat coordinates $(x,y)$ as above) equivariant under the parallel transport.
A closed orientable surface endowed with a flat metric
with isolated conical singularities
having trivial linear holonomy
and endowed with a distinguished direction in the tangent space
at some point (and hence at all points)
is called a \textit{translation surface}.
Similar to in the torus case, geodesics on translation surfaces do not have self-intersections at regular points.

A geodesic segment joining two saddle points (or a saddle point to itself) and having no saddle points in its interior is called \textit{saddle connection}.
The right picture in Figure~\ref{pic:degsad} illustrates
a saddle connection joining two saddle points.
The choice of the vertical direction incorporated in the structure
of translation surface endows any oriented saddle connection $\gamma$
with a direction. In this way, we can
consider the corresponding \textit{affine holonomy} vector
as a complex number in $\mathbb{C}\simeq\mathbb{R}^2$.
By construction, this complex number coincides with the integral
of the holomorphic $1$-form $\omega$ along $\gamma$. Since
both endpoints of the saddle connection $\gamma$
are located at zeroes of the $1$-form, $\gamma$ defines
an element of the relative homology group
$H_1(C,\{P_1,\dots,P_n\})$, where $C$ is the Riemann surface,
and $\{P_1,\dots,P_n\}$ is the set of zeroes of $\omega$. Thus,
the integral $\int_\gamma\omega$ defines a \textit{relative period} of $\omega$.

\begin{figure}[ht]
% A flat surface with cylinders of multiplicity $2$.
%
\includegraphics{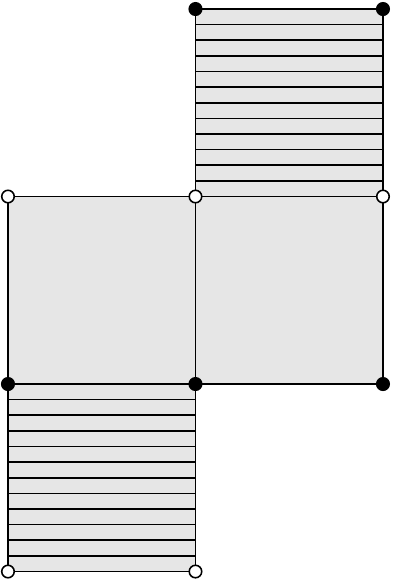} \qquad \qquad \includegraphics{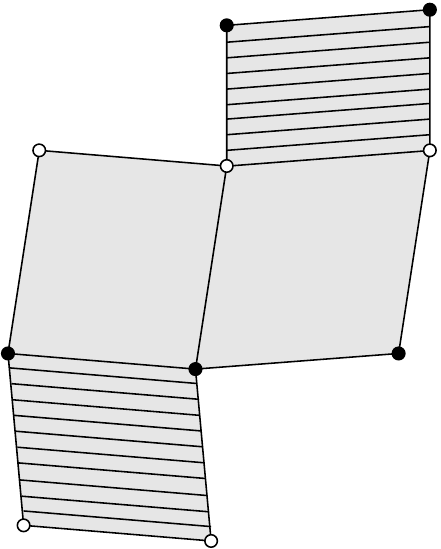}

\begin{picture}(0,0)(0,-10)
\put(-10,-10)
{\begin{picture}(0,0)(0,0)
	\put(-53,182){$\tau$}
	\put(-18,150){$\eta$}
	\put(-85,150){$\eta$}
	\put(-107,130){$\rho$}
	\put(-140,95){$\gamma$} % -160, -100
	\put(-72,95){$\gamma_1$}
	\put(-18,95){$\gamma$} %-35,-100
	\put(-53,60){$\tau$}
	\put(-140,40){$\beta$}
	\put(-72,40){$\beta$}
	\put(-107,5){$\rho$}
	\end{picture}}
%------------------------
\put(152,-10)
{\begin{picture}(0,0)(0,0)
	\put(-48,170){$\tau$} % (-73,-8)
	\put(-10,145){$\eta$} % (-38,-40)
	\put(-80,145){$\eta$}% (-105,-40)
	\put(-107,130){$\rho$}   %
	\put(-140,95){$\gamma$} %
	\put(-72,95){$\gamma_1$}%
	\put(-15,95){$\gamma$}  % (-38,-95)
	\put(-48,57){$\tau$} % (-73,-130)
	\put(-142,40){$\beta$}
	\put(-75,40){$\beta$}
	\put(-107,5){$\rho$}
	\end{picture}}
%------------------------
\end{picture}

\caption{%\small
\label{pic:m2simple}
Nonhomologous saddle  connections  which  have  the same holonomy
lose this property after a  generic  deformation  of the surface,
while homologous ones, $\gamma\sim\gamma_1$, share the same affine holonomy.
}
\end{figure}

The same period may be represented
by several saddle connections $\gamma_1,\dots,\gamma_k$.
Any finite collection
$\gamma_1,\dots,\gamma_k$ of saddle connections
persists under small deformations of the translation surface.
If
the initial saddle connections are homologous as elements of
$H_1(C,\{P_1,\dots,P_n\})$, then the deformed saddle connections
stay homologous, and hence define the same period
of the deformed $1$-form.
Figure~\ref{pic:m2simple} (copied from Figure~2 in~\cite{PBC})
presents an example of a
\textit{configuration of homologous saddle connections of multiplicity $2$}.
The translation surfaces are obtained from the corresponding polygons by
gluing together pairs of sides marked by the same symbol.
The relative periods of the translation surface in the left picture
along the saddle connections represented by the positively oriented
horizontal (vertical) sides of the squares are equal to $1$ (respectively to $\mathrm{i}$).
However, after a generic small deformation of the translation surface, the periods along non-homologous
saddle connections become different,
while periods along homologous saddle connections $\gamma$ and
$\gamma_1$ coincide.

We refer to~\cite{PBC} for detailed combinatorial description of the notion
of \textit{configuration of homologous saddle connections}. The case when
such saddle connections join distinct saddle points is illustrated in
Figure~\ref{pic:dist:sad:mult}
borrowed from~\cite{PBC}. The number $k$ of homologous saddle connections
in such configuration is called the \textit{multiplicity} of the configuration.
Cutting the surface along $k$ homologous saddle connections we decompose
the surface into $k$ connected components. Each connected component
has boundary in a form of a slit composed of two geodesic segments
having the same length and the same direction. Gluing together the two
sides of each slit as in Figure~\ref{pic:dist:sad:mult}
we get $k$ translation surfaces without boundary of smaller genera
each endowed with a distinguished saddle connection.
For example, applying this operation
to homologous saddle connections $\gamma$ and $\gamma_1$ in any of the
two surfaces as in Figure~\ref{pic:m2simple}
we get two flat tori with slits of the same length and direction.
The combinatorial geometry of the corresponding \textit{configuration of homologous saddle connections} 
is described by the geometry of the resulting geometric configuration.

\begin{figure}[htb!]
%
%  MULTIPLE SADDLE CONNECTIONS. DETAILED PICTURE
 %

\begin{picture}(100, 12)(150, 62)
\put(0,-20){\includegraphics[height=2.75cm, width=6.25cm]{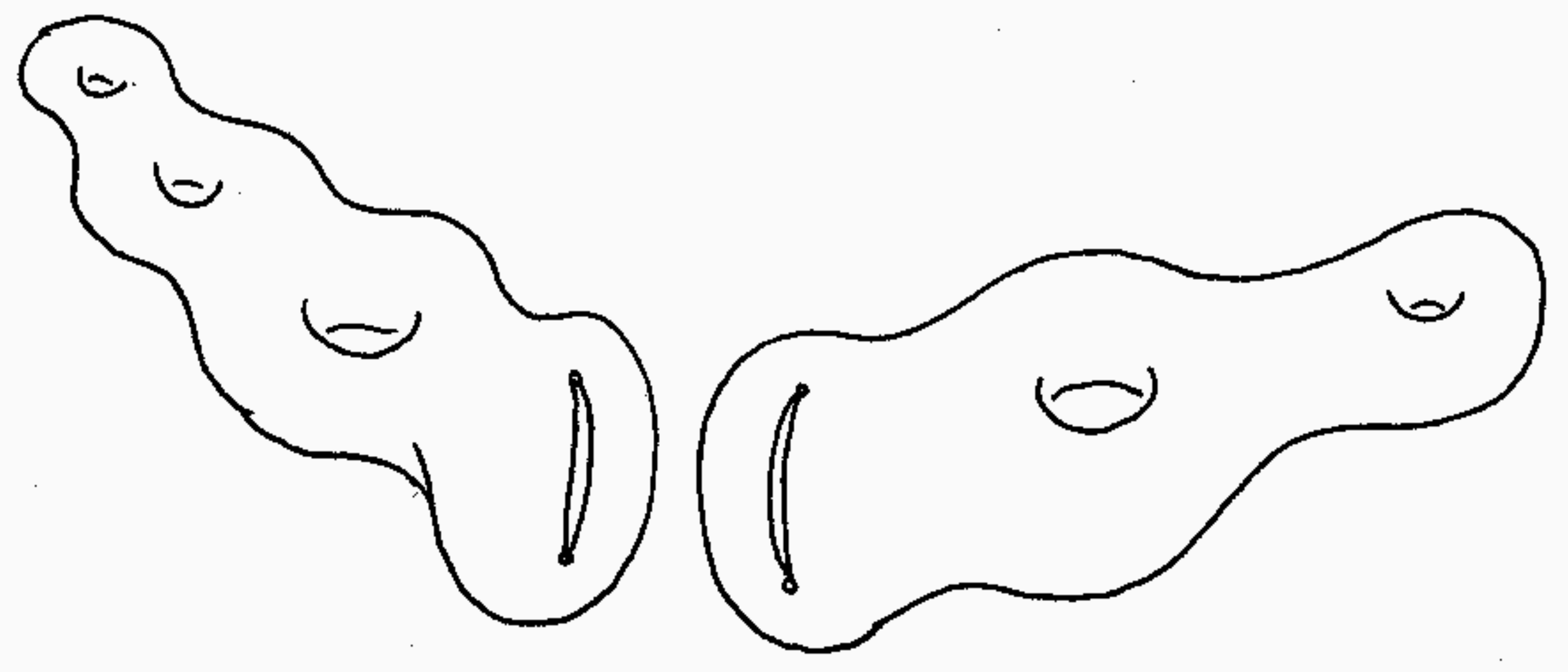}} \put(200,-10){\includegraphics[height=3cm,width=6.5cm]{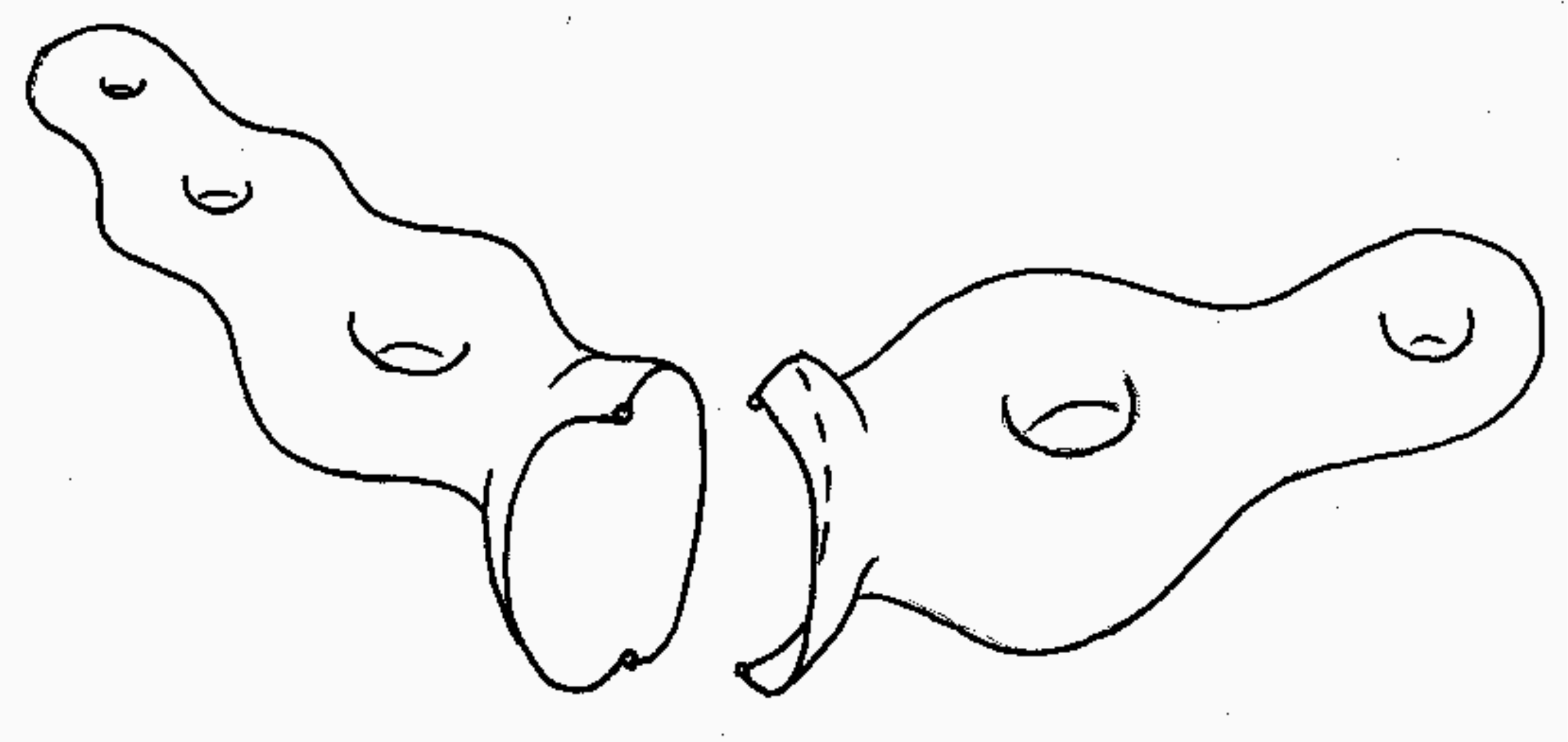}}

\put(10,-120){\includegraphics[height=3.5cm, width=3cm]{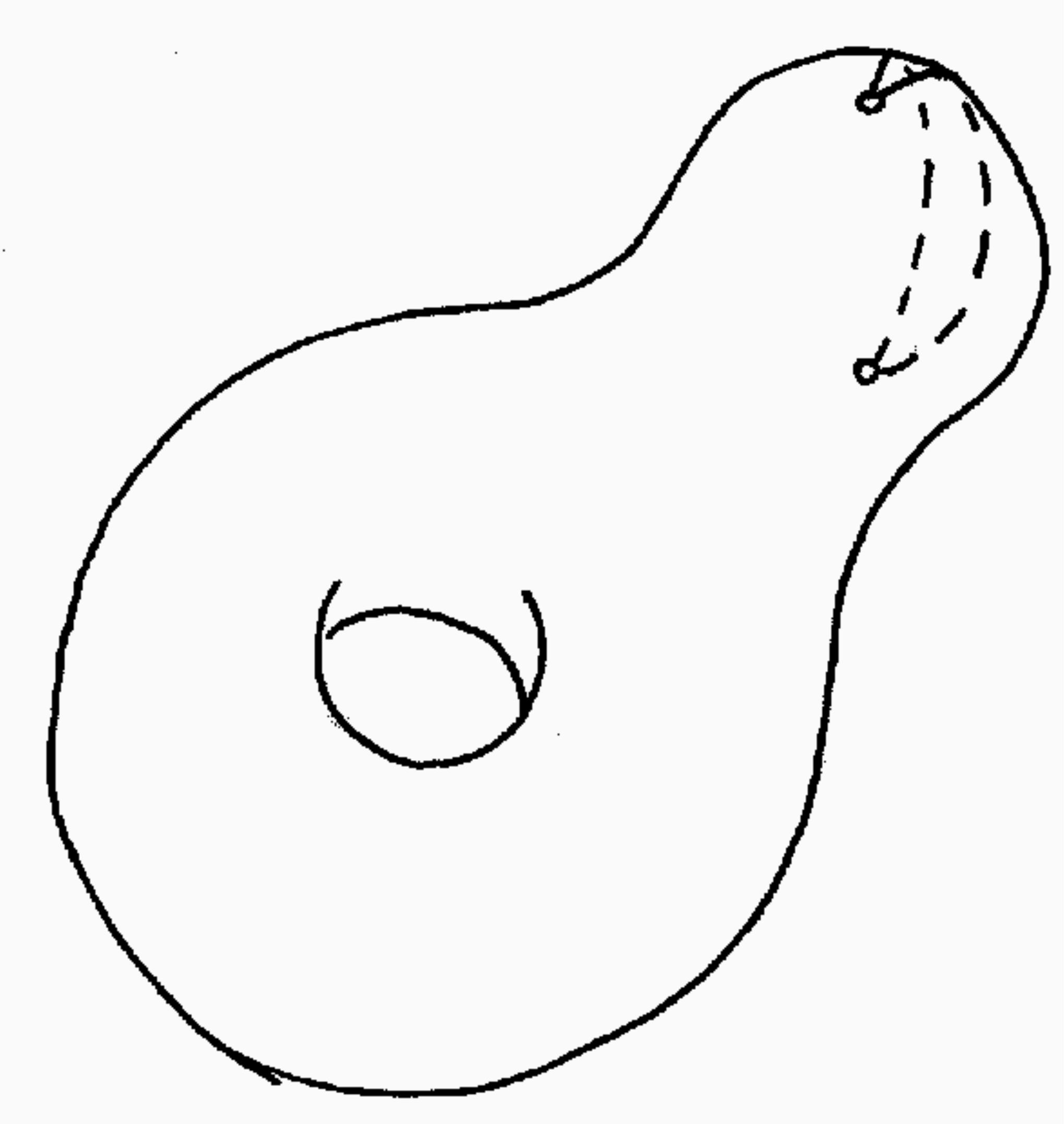}} \put(180,-175){\qquad \includegraphics[height=5cm, width=6cm]{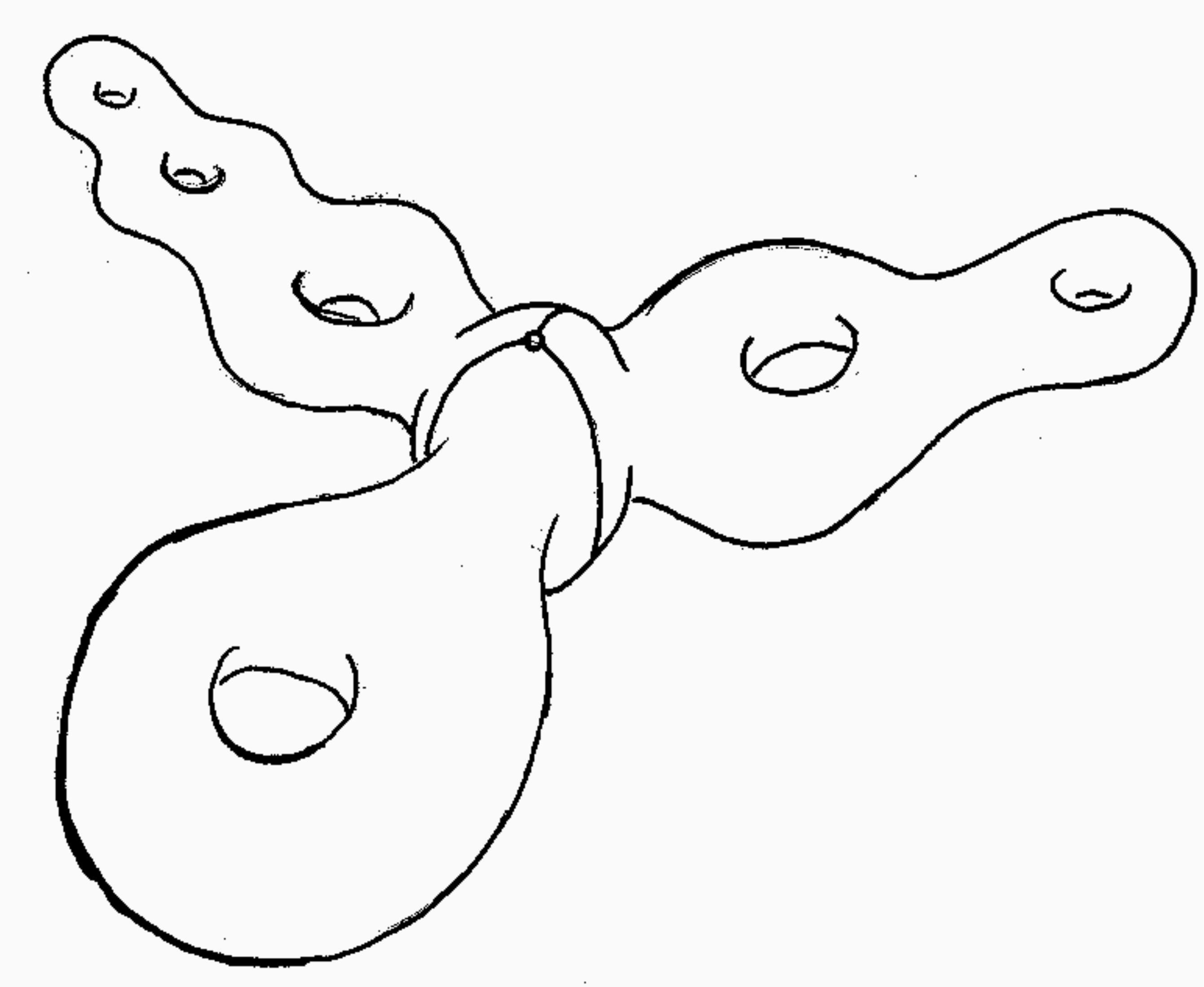} }
\end{picture}

\begin{picture}(12,-38)(12,-38)
\put(0,0){
	\begin{picture}(0,0)(0,0) % NW figure gen23.eps
	\put(-155,-35){$S_3$}
	\put(-35,-45){$S_2$}
	\put(-129,-84){$\scriptscriptstyle \gamma'_3$}
	\put(-117,-84){$\scriptscriptstyle \gamma''_3$}
	\put(-106,-84){$\scriptscriptstyle \gamma'_2$}
	\put(-95,-84){$\scriptscriptstyle \gamma''_2$}
	\end{picture}
}
%------------------------
\put(5,0){
	\begin{picture}(0,0)(0,0) % SW figure genu1.eps
	\put(-125,-203){$S_1$}
	\put(-116,-130){$\scriptscriptstyle \gamma''_1$}
	\put(-101,-133){$\scriptscriptstyle \gamma'_1$}
	\end{picture}
}
%========================================
\put(0,0){
	\begin{picture}(0,0)(0,0) % NE figure g23.eps
	\put(50,-24){$S_3$}
	\put(152,-34){$S_2$}
	\put(95,-52){$z''$}
	\put(89,-97){$z'$}
	\end{picture}
}
%------------------------
\put(0,0){
	\begin{picture}(0,0)(0,0) % SE figure g123.eps
	\put(85,-257){$S_1$}
	\put(55,-136){$S_3$}
	\put(163,-147){$S_2$}
	\put(89,-161){$z_2$}
	\put(89,-185){$\scriptstyle \gamma_2$}
	\put(80,-181){$\scriptstyle \gamma_1$}
	\end{picture}
}
\end{picture}

\vspace{215bp} % 240

\caption{%\small
\label{pic:dist:sad:mult}
Multiple homologous saddle connections. }
\end{figure}

	Consider a flat torus of unit area. The  number  of
	geodesic segments of length  at most $L$  joining a generic
	pair of distinct  points on the torus grows quadratically
	as the number of lattice points in a  disc of radius $L$,
	so we get asymptotics $\pi L^2$. The number of (homotopy
	classes) of closed  geodesics of length at most $L$ has
	different  asymptotics.  Since  we want to count  only
	\textit{primitive} geodesics (those  which  do not repeat
	themselves) now we have to count  only \textit{coprime}
	lattice points in a  disc of radius $L$,  considered up to
	a symmetry of  the torus. Therefore we get the
	asymptotics     $\cfrac{1}{2\zeta(2)}\cdot      \pi
	L^2=\cfrac{3}{\pi^2}\cdot \pi L^2$.
  	
	It is proved in~\cite{AFS} that  the  growth  rate
	of the number of saddle connections for a generic
	translation surface corresponding to any stratum $\cH(m)$
	in the moduli space of Abelian differentials also has
	quadratic asymptotics  $c\cdot  (\pi L^2)$, and, moreover,
	almost all flat surfaces of unit area in any connected
	component of any stratum share the same  constant $c$ in
	the  asymptotics. The constant $c$ is called the
	\textit{Siegel--Veech constant}. It depends on the
	connected component of the stratum and on the geometric
	type of geodesic segments which we count. In the two
	examples for the torus, the Siegel--Veech constant
	corresponding to the count of geodesic segments joining a
	generic pair of distinct points is equal to $1$ while the
	Siegel--Veech constant corresponding to the count of
	primitive geodesic segments joining a fixed point to itself
	equals $\frac{3}{\pi^2}$.

	%------------------------------------------------------------
	\subsection*{Volume asymptotics.}
	Let $m=(m_1,\dots,m_n)$  be  an  unordered  partition  of a
	positive even number  $2g-2$,  i.e., let
	$|m|=m_1+\dots+m_n=2g-2$.  Denote  by $\mathbb{Y}_{2g - 2}$ the set
	of all partitions. Denote by $\nu_1 \big(\cH(m_1,\dots,m_n)\big)$ the
	Masur--Veech volume of the stratum $\cH(m_1,\dots,m_n)$ in
	normalization of~\cite{PBC}.

	Theorem \ref{volumeestimateasymptotic} can be rephrased as follows.
	
	\begin{NoNumberTheorem}
		For any $m\in\mathbb{Y}_{2g - 2}$ one has
		\begin{equation}
		\label{eq:asymptotic:formula:for:the:volume}
		\nu_1 \big(\cH_1(m_1,\dots,m_n)\big)=\cfrac{4}{(m_1+1)\cdot\dots\cdot(m_n+1)}
		\cdot (1+\varepsilon(m)),
		\end{equation}
		where
		\begin{equation}
		\label{eq:asymptotic:formula:for:the:volume:bound}
		\max_{m\in\mathbb{Y}_{2g - 2}} |\varepsilon(m)| \le \frac{2^{2^{200}}}{g}\,.
		\end{equation}
	\end{NoNumberTheorem}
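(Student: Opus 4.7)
The plan is to recognize the displayed formula as an algebraic reformulation of Theorem \ref{volumeestimateasymptotic}. I would set
\[
\varepsilon(m) := \frac{\nu_1\big(\mathcal{H}_1(m_1,\ldots,m_n)\big)\prod_{i=1}^n(m_i+1)}{4} - 1,
\]
so that \eqref{eq:asymptotic:formula:for:the:volume} holds tautologically. The two-sided bound \eqref{volumeestimate1h1} from Theorem \ref{volumeestimateasymptotic} is symmetric about $4/\prod(m_i+1)$ with deviation $2^{2^{200}}/g$ on each side, so it translates directly into $|\varepsilon(m)|\le 2^{2^{200}}/g$; and since \eqref{volumeestimate1h1} is uniform in $m\in\mathbb{Y}_{2g-2}$, so is \eqref{eq:asymptotic:formula:for:the:volume:bound}.

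For completeness I sketch the underlying argument for Theorem \ref{volumeestimateasymptotic}. Via \eqref{cnuh} and Proposition \ref{cmuinner}, the statement reduces to Theorem \ref{volume}, which asserts that $\big\langle \mathcal{F}_{m_1}\b|\cdots\b|\mathcal{F}_{m_n}\big\rangle = 2|m|!+O\big((|m|-1)!\big)$ whenever each $m_i\ge 2$. Expanding $\mathcal{F}_{m_i}=m_i\mathfrak{f}_{m_i}$ via \eqref{kf} turns this inner product into a signed sum indexed by $n$-tuples of partitions $(\lambda^{(1)},\ldots,\lambda^{(n)})$ with $\wt(\lambda^{(i)})=m_i+1$. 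The dominant contribution comes from $\lambda^{(i)}=(m_i)$ for every $i$; by \eqref{singleinnermany} this term equals $|m|!\mathfrak{z}(|m|-n+2)+\mathcal{E}(m)$, and Lemma \ref{mestimatelarge} together with the last estimate of \eqref{2ll} evaluates it as $2|m|!+O\big((|m|-1)!\big)$, which matches the target leading behavior.

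The main obstacle is then to show that the remaining terms in the expansion collectively contribute $O\big((|m|-1)!\big)$. I would partition the non-dominant $n$-tuples into exceptional sequences, where each $\lambda^{(i)}$ has hook type $(m_i-2l_i+2,1^{l_i-1})$, and generic sequences, where some $\lambda^{(j)}$ has two parts $\ge 2$. For exceptional sequences, Proposition \ref{innermanyestimate} bounds each inner product, while the combinatorial sum collapses via the identity $\sum_{\ell\in\mathcal{C}_r(n)}\prod_i m_i^{l_i-1}/(l_i-1)!=|m|^{r-n}/(r-n)!$, and Lemma \ref{krnestimate} then controls $|m|^{r-n}(|m|-2r+2n)!$ in terms of $(|m|-1)!$. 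For generic sequences the refined Proposition \ref{innermanyestimate2} is essential: it saves one factor of $|m|$, without which the combinatorial sum over $n$-tuples of partitions would only yield $O(|m|!)$ instead of the needed $O((|m|-1)!)$. Lemma \ref{sumsb} then converts the sum over compositions into $|m|^{2(r-n)}/(2r-2n)!$, and the second bound of Lemma \ref{krnestimate} compresses this into a geometric series in $r-n$ that sums to $(|m|-1)!$ times a constant, closing the estimate.
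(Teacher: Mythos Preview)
Your proposal is correct and matches the paper's approach: the unnumbered theorem is presented in the appendix explicitly as a rephrasing of Theorem~\ref{volumeestimateasymptotic}, so defining $\varepsilon(m)$ tautologically and reading off the bound from \eqref{volumeestimate1h1} is exactly what is intended. Your subsequent sketch of the proof of Theorem~\ref{volume} is not required for this statement, but it is an accurate summary of the argument in Section~\ref{EstimateC}, including the split into exceptional and generic $n$-tuples and the respective roles of Propositions~\ref{innermanyestimate} and~\ref{innermanyestimate2}.
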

	
	The results in~\cite{PBC} combined with the
	bound~\eqref{eq:asymptotic:formula:for:the:volume:bound}
	for the error term
	in~\eqref{eq:asymptotic:formula:for:the:volume} immediately
	imply asymptotics of certain Siegel--Veech constants for
	connected strata in large genus. Recall that saddle
	connections might appear in tuples, triples etc of
	homologous saddle connections having the same direction and
	the same length (see~\cite{PBC} for
	details). The asymptotic formulae for Siegel--Veech
	constants become particularly simple in the case when one
	restricts the count to saddle connections of
	\textit{multiplicity one}. 

    % 		Conjecturally the Siegel--Veech constants for higher
    % 	multiplicities become negligibly small with respect to the
    % 	ones for multiplicity one, so, conjecturally, the large
    % 	genus asymptotics of Siegel--Veech constants described in
    % 	this note remains the same even after omitting the
    % 	assumption on multiplicity of saddle connections.
    % 	%\medskip

    The original preprint version of this note stated as a conjecture that	
    the Siegel--Veech constants for higher
	multiplicities become negligibly small with respect to the
	Siegel--Veech constants for multiplicity one computed below.
    This conjecture was proved in the recent paper of A.~Aggarwal~\cite{LGAC} as 
    part of the proof of the conjectures of A.~Eskin and the author on large genus asymptotics of Siegel--Veech constants.
    The very recent preprint 
    of D.~Chen, M.~M\"oller, A.~Sauvaget and D.~Zagier~\cite{VSCC} suggests an alternative proof
    of the conjectures on large genus asymptotics of Siegel--Veech constants.
    Combined with 
    the computations below it implies an alternative proof of the conjecture that 
    the Siegel--Veech constants for higher multiplicities become negligibly small in large genera.
    \medskip

	%------------------------------------------------------------
	\subsection*{Saddle connections joining distinct zeroes.}
	Consider any connected stratum of the form $\cH(m_1,m_2,\dots)$,
	i.e. one which has at least two distinct zeroes, where $m_1,m_2$
	denote their degrees. The situation when $m_1=m_2$
	is not excluded. In the case when one (or
	both) of $m_1,m_2$ is equal to $0$ the ``zero of degree
	$0$'' should be seen as a generic marked point (generic pair
	of marked points respectively).
	
	\begin{Corollary}
		\label{cor:sc}
		There exists a universal constant $\ubound^{sc} > 0$ such that
		the Siegel---Veech constant
		$c^{sc}_{m_1,m_2}(\cH(m_1,m_2,\dots))$ corresponding to the
		count  of  saddle connections of multiplicity one joining a
		fixed zero of degree $m_1$ to a distinct zero of degree
		$m_2$ satisfies
		$$
		c^{sc}_{m_1,m_2}(\cH(m_1,m_2,\dots))
		= (m_1+1)(m_2+1)
		\cdot\big(1+\varepsilon^{sc}_{m_1,m_2}(m)\big)
		$$
		where
		\begin{equation}
		\label{eq:saddle:connections}
		\max_{m\in\mathbb{Y}_{2g - 2}} |\varepsilon^{sc}_{m_1,m_2}(m)|
		\le \frac{\ubound^{sc}}{g}\,.
		\end{equation}
	\end{Corollary}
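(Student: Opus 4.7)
The plan is to deduce this corollary by combining the Eskin--Masur--Zorich formulas of \cite{PBC}, which express any Siegel--Veech constant counting saddle connections of a prescribed combinatorial type as a finite sum of explicit combinatorial multiples of ratios of Masur--Veech volumes, with the volume asymptotic provided by Theorem \ref{volumeestimateasymptotic}.

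First I would identify the boundary stratum. Shrinking a multiplicity-one saddle connection joining a zero of degree $m_1$ to a distinct zero of degree $m_2$ merges these two zeroes into a single zero of degree $m_1+m_2$, producing a translation surface of the same genus in the stratum $\mathcal{H}(m')$, where $m'=(m_1+m_2,m_3,\dots,m_n)$. The general principle of \cite{PBC} then expresses
\[
c^{sc}_{m_1,m_2}\bigl(\mathcal{H}(m)\bigr)=K(m_1,m_2)\cdot\frac{\nu_1\bigl(\mathcal{H}_1(m')\bigr)}{\nu_1\bigl(\mathcal{H}_1(m)\bigr)},
\]
where $K(m_1,m_2)$ is an explicit combinatorial factor depending only on $m_1$ and $m_2$, arising from the Jacobian between period coordinates on $\mathcal{H}(m)$ and the coordinates on $\mathcal{H}(m')$ enlarged by the slit holonomy, together with a dimensional normalization. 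A direct inspection of the formulas in \cite{PBC} yields $K(m_1,m_2)=m_1+m_2+1$.

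Second, I would apply Theorem \ref{volumeestimateasymptotic} to both volumes. Let $\varepsilon$ and $\varepsilon'$ denote the relative errors for $\nu_1(\mathcal{H}_1(m))$ and $\nu_1(\mathcal{H}_1(m'))$ respectively, each bounded by $2^{2^{200}}/g$. Then
\[
\frac{\nu_1(\mathcal{H}_1(m'))}{\nu_1(\mathcal{H}_1(m))}=\frac{(m_1+1)(m_2+1)}{m_1+m_2+1}\cdot\frac{1+\varepsilon'}{1+\varepsilon},
\]
so multiplying by $K(m_1,m_2)=m_1+m_2+1$ gives $(m_1+1)(m_2+1)$ times $(1+\varepsilon')/(1+\varepsilon)$. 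The elementary estimate $\bigl|(1+\varepsilon')/(1+\varepsilon)-1\bigr|\le 3\max(|\varepsilon|,|\varepsilon'|)$, valid once $g$ is large enough that both errors lie below $1/2$, then yields the bound $|\varepsilon^{sc}_{m_1,m_2}(m)|\le \ubound^{sc}/g$ for a suitable universal constant $\ubound^{sc}$; the finitely many small genera are absorbed by enlarging $\ubound^{sc}$.

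The main obstacle is extracting the precise value $K(m_1,m_2)=m_1+m_2+1$ from the rather intricate general formulas of \cite{PBC} and verifying that it is indeed independent of the rest of $m$. A secondary subtlety is that the corollary only addresses the multiplicity-one contribution: configurations of homologous saddle connections of higher multiplicity contribute additional terms on the right-hand side involving Masur--Veech volumes of boundary strata of surfaces of strictly lower complexity, and a separate argument (later carried out in \cite{LGAC}) is required to show that their aggregate contribution is negligible in large genus. Finally, since the corollary assumes the ambient stratum is connected, Theorem \ref{volumeestimateasymptotic} applies directly to the full volume and no connected-component refinement is needed.
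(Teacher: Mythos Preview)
Your proposal is correct and follows essentially the same approach as the paper: cite the Eskin--Masur--Zorich formula from \cite{PBC} giving $c^{sc}_{m_1,m_2}=(m_1+m_2+1)\,\nu_1(\mathcal{H}_1(m'))/\nu_1(\mathcal{H}_1(m))$ with $m'$ obtained by merging $m_1,m_2$ into $m_1+m_2$, then apply Theorem~\ref{volumeestimateasymptotic} to both volumes and simplify. Your discussion of higher-multiplicity contributions is unnecessary here, since the corollary is explicitly restricted to multiplicity one and the cited formula already isolates that count; the paper's proof simply omits that digression.
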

	
	\begin{proof}
		By the formula preceding formula (17) in~\cite{PBC}
		the corresponding Siegel--Veech constant equals
		\begin{equation}
		\label{eq:sc:from:EMZ}
		c^{sc}_{m_1,m_2}(\cH(m_1,m_2,\dots))
		= \frac{
			(m_1+m_2+1)\nu_1 \big(\cH_1(m')\big)}{
			\nu_1\big(\cH_1(m)\big)}\,,
		\end{equation}
		where $m=\{m_1,m_2,\dots, \}$ and $m'$ is
		obtained from $m$ by replacing the first two entries
		with the single entry $m_1+m_2$.
		Applying~\eqref{eq:asymptotic:formula:for:the:volume}
		to the ratio of volumes we get
        the desired asymptotic expression.
% 		%
% 		$$
% 		\frac{\nu_1\big(\cH_1(m')\big)}{
% 			\nu_1\big(\cH_1(m)\big)}=
% 		\frac{(m_1+1)(m_2+1)}{(m_1+m_2+1)}
% 		\cdot\frac{1+\varepsilon(m')}{1+\varepsilon(m)}\,.
% 		$$
% 		%
% 		Bounds~\eqref{eq:asymptotic:formula:for:the:volume:bound}
% 		now imply that
% 		%
% 		$$
% 		\sup_{g\ge 2}
% 		\ g\cdot
% 		\max_{\substack{{m\in\mathbb{Y}_{2g - 2}}\\
% 				m_1,m_2\in m\\
% 				\cH(m)\text{ is connected}}}
% 		\left|
% 		\frac{1+\varepsilon(m')}{1+\varepsilon(m)}
% 		-1\right|
% 		=:\ubound^{sc}<+\infty
% 		$$
% 		%
% 		and~\eqref{eq:saddle:connections} follows.
	\end{proof}
	
	\begin{Remark}
		The answer  matches the following extremely naive
		interpretation (which should be taken with a reservation).
		Normalization of Masur--Veech volumes as in~\cite{PBC}
		implies that
		$$
		\nu_1\big(\cH(0,0,m_1,\dots,m_n)\big)=
		\nu_1\big(\cH(0,m_1,\dots,m_n)\big)=
		\nu_1\big(\cH(m_1,\dots,m_n)\big)\,.
		$$
		Thus, by~\eqref{eq:sc:from:EMZ}, the Siegel--Veech constant
		$c^{sc}_{0,0}(0,0,m_1,\dots)$ corresponding to the  number
		of  saddle connections of multiplicity one joining a
		generic marked point $P_1$ to a distinct generic marked
		point $P_2$ identically equals to $1$. When the total angle
		at $P_1$ is $m_1+1$ times bigger and the total angle  at
		$P_2$ is $m_2+1$   times  bigger  we  get an    extra
		factor $(m_1+1)(m_2+1)$.
		
		% This naive interpretation is, however, not applicable to
		% half-translation surfaces represented by quadratic
		% differentials. For example, for any stratum of meromorphic
		% quadratic differentials with at most simple poles in genus
		% zero containing at least two simple zeroes, the
		% Siegel--Veech constant $c^{sc}_{1,1}$ for the number of
		% saddle connections of multiplicity $1$ joining a fixed pair
		% of simple zeroes equals $\frac{64}{3\pi^2}$ (see formula
		% (4.25) in~\cite{Athreya:Eskin:Zorich}) and not $9/4$ as
		% suggests the cone angle consideration as above.
		
		By the same formula~\eqref{eq:sc:from:EMZ}, the Siegel--Veech constant
		corresponding to the  number  of  saddle connections of
		multiplicity one joining a generic marked point $P_1$ to a
		fixed zero $P_2$ of degree $m_1$ identically equals to $(m_1+1)$
		$$
		c^{sc}_{0,m_1}(0,m_1,\dots)=(m_1+1)\,.
		$$
	\end{Remark}
	
	The preprint version of this appendix stated a conjecture that the condition ``multiplicity one'' in the
	statement of Corollary~\ref{cor:sc} can be omitted: the
	contribution of all higher multiplicities becomes
	negligible in large genus.
    Meanwhile, this conjecture was proved first by A.~Aggarwal in~\cite{LGAC} and then by
    D.~Chen, M.~M\"oller, A.~Sauvaget and D.~Zagier in~\cite{VSCC} by completely different methods. Moreover, article~\cite{VSCC} proves
    that counting multiple homologous saddle connections as a single one, the corresponding Siegel--Veech constant
    equals $(m_1+1)(m_2+1)$ identically for any nonhyperelliptic component of any stratum. Both proofs are
    quite involved, so for the sake of completeness we keep the original proof
    in the simplest
	case of the principal stratum, where the only higher
	multiplicity is two.
	
	\begin{Corollary}
		There exists a universal constant $\ubound^{sc}_2 > 0$ such that
		the Siegel---Veech constant
		$c^{sc;2}_{1,1}(\cH(1^{2g-2}))$ corresponding to the
		count  of  pairs of homologous saddle connections joining
		a fixed pair of distinct zeroes satisfies
		\begin{equation}
		\label{eq:principal:sc:2}
		c^{sc;2}_{1,1}(\cH(1^{2g-2}))
		\le \frac{\ubound^{sc}_2}{g}\,.
		\end{equation}
	\end{Corollary}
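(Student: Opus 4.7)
The plan is to combine the Eskin--Masur--Zorich formula for Siegel--Veech constants of multiplicity-two configurations of homologous saddle connections (from~\cite{PBC}) with the asymptotic~\eqref{eq:asymptotic:formula:for:the:volume}.

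First, I would enumerate the topological configurations. Each of the two fixed simple zeros $P_1, P_2$ has cone angle $4\pi$, and a pair of homologous saddle connections joining them splits each of these cone angles into two sectors whose sizes are positive integer multiples of $2\pi$ summing to $4\pi$; the only such splitting is $2\pi + 2\pi$. Cutting along the two saddle connections therefore yields two closed translation surfaces $S_1, S_2$ of genera $g_1, g_2$ with $g_1 + g_2 = g$, on each of which the images of $P_1$ and $P_2$ become marked regular points, and which together inherit the $2g-4$ remaining simple zeros. Hence $S_i \in \cH_1(1^{2g_i-2}, 0, 0)$ for each $i$.

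Next, I would invoke the EMZ formula, which for this family of configurations produces an expression
\[
c^{sc;2}_{1,1}\bigl(\cH(1^{2g-2})\bigr) = \frac{1}{\nu_1(\cH_1(1^{2g-2}))} \sum_{\substack{g_1 + g_2 = g \\ g_1, g_2 \geq 1}} \kappa(g_1, g_2)\, \nu_1\bigl(\cH_1(1^{2g_1-2}, 0, 0)\bigr)\, \nu_1\bigl(\cH_1(1^{2g_2-2}, 0, 0)\bigr),
\]
where $\kappa(g_1, g_2)$ is the combinatorial/geometric factor extracted from~\cite{PBC}, collecting the $\tfrac12$ for swapping the two pieces, the orbifold factor for relabeling the remaining simple zeros, and the normalization arising from the Siegel--Veech integration over the extra relative period. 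Since marked points do not change the EMZ-normalized volume, $\nu_1(\cH_1(1^{2g_i-2}, 0, 0)) = \nu_1(\cH_1(1^{2g_i-2}))$, and combining \eqref{eq:asymptotic:formula:for:the:volume} with the constraint $g_1 + g_2 = g$ yields the uniform bound
\[
\frac{\nu_1(\cH_1(1^{2g_1-2}))\, \nu_1(\cH_1(1^{2g_2-2}))}{\nu_1(\cH_1(1^{2g-2}))} \leq 2^6
\]
for all sufficiently large $g$ and all admissible $(g_1, g_2)$; in particular, this ratio is of constant order, independent of $g_1, g_2, g$.

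The proof thus reduces to showing $\sum_{g_1 + g_2 = g} \kappa(g_1, g_2) \leq \ubound^{sc}_2/g$ for a universal constant $\ubound^{sc}_2$. The hard part will be to extract the polynomial-in-$g$ decay hidden in the EMZ factor $\kappa$: in the multiplicity-one case~\eqref{eq:sc:from:EMZ}, the explicit factor $(m_1 + m_2 + 1)$ in the numerator conspires with the volume ratio to produce the constant answer $(m_1+1)(m_2+1)$, whereas for multiplicity-two configurations one expects an analogous but \emph{inverse} factor of order $g$ appearing in the denominator of $\kappa$, reflecting the additional relative period being integrated out when two homologous saddle connections persist in place of one. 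Carefully tracking this factor through the orbifold normalizations of~\cite{PBC} is the delicate step; once it is in hand, the estimate follows since the sum has at most $g - 1$ terms, each consisting of a bounded volume ratio multiplied by $\kappa$ of size $O(1/g^2)$.
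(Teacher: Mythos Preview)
Your approach matches the paper's: write the Siegel--Veech constant as a sum over genus splittings $g_1+g_2=g$ of a combinatorial EMZ factor times a ratio of Masur--Veech volumes, bound the volume ratio uniformly using~\eqref{eq:asymptotic:formula:for:the:volume}, and then control the combinatorial sum. Your treatment of the volume ratio is correct and essentially identical to the paper's.

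The gap is in the final step. You leave the size of $\kappa(g_1,g_2)$ as ``the delicate step'' and offer only a heuristic: that an ``additional relative period being integrated out'' should contribute an inverse factor of order $g$. But this heuristic predicts $\kappa\sim 1/g$, which after summing $g-1$ terms gives only $O(1)$, not $O(1/g)$; you then assert without justification that $\kappa=O(1/g^2)$. These two claims are inconsistent, and neither is substantiated. The actual mechanism is not a single ``extra period'' factor but a ratio of factorials coming from the dimension normalization in the EMZ formula, together with the multinomial for distributing the remaining $2g-4$ simple zeros between the two pieces.

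Concretely, the paper (citing formula~9.2 of~\cite{PBC} and dividing by the number $\tbinom{2g-2}{2}$ of pairs of simple zeros) extracts
\[
\kappa(g_1,g_2)=\frac{1}{4}\cdot a_{g_1},\qquad a_{g_1}:=\frac{(2g-4)!\,(4g_1-3)!\,(4g_2-3)!}{(2g_1-2)!\,(2g_2-2)!\,(4g-5)!}\,,
\]
and then observes that $a_1=\dfrac{1}{(4g-5)(4g-6)}$ and that the ratio $a_{g_1+1}/a_{g_1}=\dfrac{(4g_1+1)(4g_1-1)}{(4g_2-3)(4g_2-5)}$ is at most $1$ for $g_1<g_2$. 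By the symmetry $a_{g_1}=a_{g-g_1}$ this gives $a_{g_1}\le a_1$ for all $g_1$, hence $\sum_{g_1} a_{g_1}\le (g-1)a_1=O(1/g)$. This monotonicity argument is the missing ingredient in your plan; once you write down the explicit $\kappa$ from~\cite{PBC}, it is a two-line computation rather than a delicate normalization chase.
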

	\begin{proof}
		This configuration of homologous saddle connections is
		discussed in details in section 9.6
		of~\cite{PBC}. The two homologous saddle
		connections joining two fixed distinct simple zeroes cut the
		surface into two subsurfaces of positive genera $g_1, g_2$
		where $g_1+g_2=g$. Formula 9.2 in~\cite{PBC}
		gives the value of the corresponding Siegel--Veech constant
		for \textit{all} possible pairs of $2g-2$ simple zeroes.
		Dividing the corresponding expression by the number
		$(2g-2)(2g-1)/2$ of possible pairs we get
		\begin{multline*}
		c^{sc;2}_{1,1}(\cH(1^{2g-2}))
		=\frac{1}{4}\cdot\sum_{g_1+g_2=g}
		\frac{(2g-4)!\,(4g_1-3)!\,(4g_2-3)!}{(2g_1-2)!\,(2g_2-2)!\,(4g-5)!}
		\cdot
		\frac{\nu_1\big(\cH(1^{2g_1-2})\big)\cdot\nu_1\big(\cH(1^{2g_2-2})\big)}{\nu_1\big(\cH(1^{2g-2})\big)}
		\end{multline*}
		where $g_1,g_2\ge 1$.
		
		Applying~\eqref{eq:asymptotic:formula:for:the:volume} and
		taken into consideration that $g_1+g_2=g$ we conclude that
		the ratio containing the volumes is uniformly bounded from
		above uniformly in $g,g_1,g_2$.
		
		Consider the following expression as a function of $g_1$ depending on the parameter $g$, where $g_1+g_2=g$:
		$$
		a_{g_1}:=\frac{(2g-4)!\,(4g_1-3)!\,(4g_2-3)!}{(2g_1-2)!\,(2g_2-2)!\,(4g-5)!}
		$$
		Then,
		$$
		a_{1}:=\frac{1}{(4g-5)(4g-6)}
		$$
		and
		$$
		a_{g_1+1}=a_{g_1}\cdot\frac{(4g_1+1)(4g_1-1)}{(4g_2-3)(4g_2-5)}\,.
		$$
		Hence, we have $a_{g_1+1}\le a_{g_1}$
		as soon as $g_2>g_1$. Note that $a_{g-g_1}=a_{g_1}$.
		Thus,
		$$
		\sum_{g_1=1}^{g-1} a_{g_1}\le (g-1) a_1 =
		\frac{g-1}{(4g-5)(4g-6)}
		$$
		and~\eqref{eq:principal:sc:2} follows.
	\end{proof}
	%------------------------------------------------------------
	\subsection*{Isolated saddle connection joining a zero to itself.}
	Consider a connected stratum $\cH(m_1,\dots)$. Let us count
	saddle connections joining a zero of degree $m_1$ to
	itself. 

\begin{figure}[htb]

 \includegraphics[height=2.25cm,width=3.75cm]{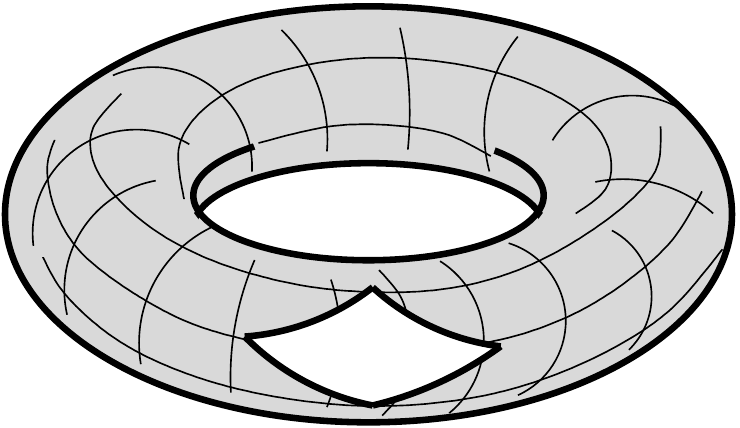} \qquad \qquad \includegraphics[height=2.25cm, width=3.75cm]{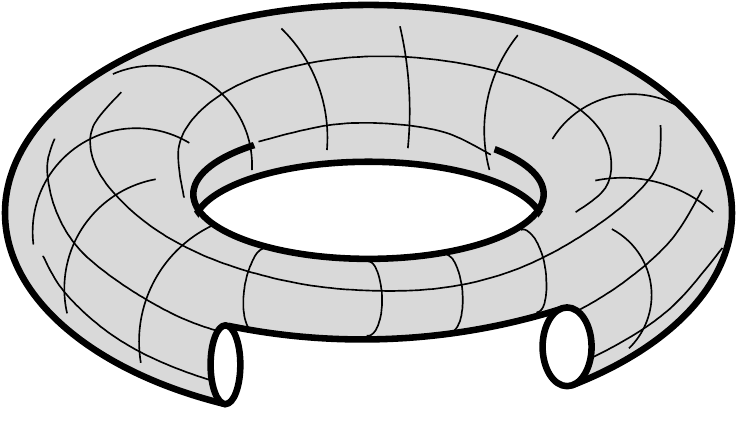} \qquad \qquad \includegraphics[height=2.25cm, width=3.75cm]{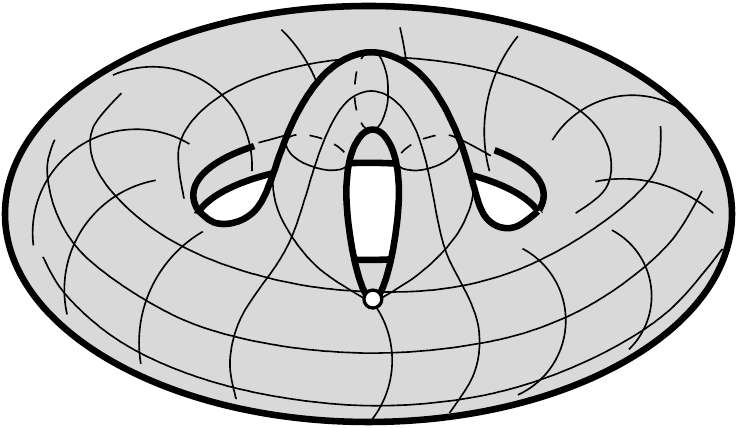}
 
\caption{
\label{fig:parallelogram:construction}
A saddle connection joining a zero to itself and not bounding a cylinder.
}
\end{figure}

	We start with saddle connections which do not bound
    a cylinder and do not have any homologous saddle connections. They can
    be obtained from a translation surface of genus $g-1$ by the following construction.
    Remove a parallelogram out of a translation surface (as in the left picture
    in Figure~\ref{fig:parallelogram:construction}). Glue one pair of 
    opposite sides of the parallelogram by parallel translation. We get
    a translation surface with two parallel geodesic boundary components 
    of the same length (as in the middle picture
    in Figure~\ref{fig:parallelogram:construction}). 
    Gluing them together we get a translation surface in genus $g$
    without boundary. By construction, the four corners of the initial 
    parallelogram are identify in one point, which is necessarily a saddle point,
    and the two geodesic boundary components of the intermediate surface become
    a single saddle connection joining this saddle point to itself.
    
    One can apply this construction backwards: cut a surface 
    along a saddle connection joining a zero to itself getting a connected surface 
    with two disjoint geodesic boundary components; join the two points on the
    boundary components coming from the original saddle point by a non self-intersecting
    path; cut the surface with boundary along this path to get a surface of 
    genus $g-1$ with a single hole in a form of curvilinear parallelogram with
    two opposite sides (coming from the original saddle connection) represented
    by parallel segments of the same length.

	\begin{Corollary}
		\label{cor:zero:to:itself:all:angles}
		There exists a universal constant $\ubound^{loop} > 0$ such
		that the Siegel---Veech constant
		$c^{loop}_{m_1}(\cH(m_1,\dots))$ corresponding to the number  of
		saddle connections of multiplicity one joining a fixed zero
		of degree $m_1$ to itself and not bounding a cylinder
		satisfies
		\begin{equation}
		\label{eq:saddle:connections:to:itself:all}
		c^{loop}_{m_1}(\cH(m_1,m_2,\dots))=
		\cfrac{(m_1+1)(m_1-1)}{2}
		\cdot\big(1+\varepsilon_{m_1}(m)\big)
		\end{equation}
		where
		\begin{equation}
		\label{eq:saddle:connections:to:itself:all:bound}
		\max_{\substack{{m\in\mathbb{Y}_{2g - 2}}\\
				m_1\in m\\
				\cH(m)\text{ is connected}}}
		|\varepsilon_{m_1}(m)|
		\le \frac{\ubound^{loop}}{g}\,.
		\end{equation}
	\end{Corollary}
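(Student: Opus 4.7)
The plan is to proceed exactly as in the proof of Corollary~\ref{cor:sc}: extract from \cite{PBC} the local surgery identity expressing $c^{loop}_{m_1}\bigl(\mathcal{H}(m)\bigr)$ as a finite sum of Masur--Veech volume ratios, and then substitute the uniform asymptotic of Theorem~\ref{volumeestimateasymptotic}. Geometrically, cutting a translation surface along an isolated saddle connection $\gamma$ joining the zero $P$ of degree $m_1$ to itself, in the case where $\gamma$ does not bound a flat cylinder, followed by closing up via the inverse of the parallelogram construction described in Figure~\ref{fig:parallelogram:construction}, produces a \emph{connected} translation surface of genus $g-1$ on which $P$ has split into two zeros of degrees $a$ and $b$. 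The cone-angle bookkeeping $2\pi(a+1)+2\pi(b+1)=2\pi(m_1+1)-2\pi$ (the extra $-2\pi$ accounts for removal of the parallelogram) forces $a+b=m_1-2$.

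The formula from \cite{PBC} (the analogue of equation \eqref{eq:sc:from:EMZ} for this figure-eight-type configuration) then reads
\begin{equation*}
c^{loop}_{m_1}\bigl(\mathcal{H}(m)\bigr) \;=\; \frac{1}{2}\sum_{\substack{a+b=m_1-2 \\ a,b\geq 0}} (a+1)(b+1)\,\frac{\nu_1\bigl(\mathcal{H}_1(a,b,m_2,\ldots,m_n)\bigr)}{\nu_1\bigl(\mathcal{H}_1(m)\bigr)},
\end{equation*}
the combinatorial weight $(a+1)(b+1)$ being the usual EMZ local-angle multiplicity and the prefactor $\tfrac12$ reflecting the unordered nature of the pair $\{a,b\}$. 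Theorem~\ref{volumeestimateasymptotic} applies uniformly to both numerator (a stratum in genus $g-1$, of total size $2g-4$) and denominator, yielding
\begin{equation*}
\frac{\nu_1\bigl(\mathcal{H}_1(a,b,m_2,\ldots,m_n)\bigr)}{\nu_1\bigl(\mathcal{H}_1(m)\bigr)} \;=\; \frac{m_1+1}{(a+1)(b+1)}\,\bigl(1+\eta_{a,b}\bigr),\qquad |\eta_{a,b}|\leq \frac{C}{g},
\end{equation*}
with $C$ a universal constant (extracted from $2^{2^{200}}$ and the elementary bound $|(1+\varepsilon')/(1+\varepsilon)-1|\leq 3(|\varepsilon|+|\varepsilon'|)$ for $|\varepsilon|,|\varepsilon'|\leq \tfrac12$, which is legitimate once $g$ exceeds a universal threshold, the remaining finitely many genera being absorbed into the constant $B^{loop}$).

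The decisive feature is that the combinatorial weight $(a+1)(b+1)$ cancels exactly against the same factor in the denominator of the volume ratio, leaving
\begin{equation*}
c^{loop}_{m_1} \;=\; \frac{m_1+1}{2}\sum_{\substack{a+b=m_1-2 \\ a,b\geq 0}}\bigl(1+\eta_{a,b}\bigr) \;=\; \frac{(m_1+1)(m_1-1)}{2}\bigl(1+\varepsilon_{m_1}(m)\bigr),
\end{equation*}
since the set $\{(a,b)\in\mathbb{Z}_{\geq 0}^2 : a+b=m_1-2\}$ has exactly $m_1-1$ elements and the averaged error $\varepsilon_{m_1}(m)$ satisfies $|\varepsilon_{m_1}(m)|\leq C/g$, giving \eqref{eq:saddle:connections:to:itself:all}--\eqref{eq:saddle:connections:to:itself:all:bound} with $B^{loop}:=C$.

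The main obstacle is not analytic but bookkeeping: locating the correct local surgery formula in \cite{PBC} for the isolated (multiplicity-one, non-cylinder-bounding) figure-eight configuration and verifying its combinatorial coefficient, together with checking that the possible disconnectedness of the auxiliary stratum $\mathcal{H}(a,b,m_2,\ldots,m_n)$ causes no trouble --- the EMZ formula automatically sums across its connected components via the total volume $\nu_1(\mathcal{H}_1(a,b,m_2,\ldots,m_n))$, so the hypothesis that $\mathcal{H}(m)$ itself is connected suffices for the left-hand side to be unambiguous. Once the formula is in hand, the miraculous cancellation of $(a+1)(b+1)$ makes the asymptotic analysis essentially immediate from the uniformity in Theorem~\ref{volumeestimateasymptotic}.
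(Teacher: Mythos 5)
Your proposal is correct and follows essentially the same route as the paper: the paper likewise invokes the Eskin--Masur--Zorich ``creating a pair of holes'' formula, in which the zero of degree $m_1$ corresponds on a genus $g-1$ surface to two zeroes of degrees $b'=j-1$ and $b''=m_1-j-1$ (so $b'+b''=m_1-2$), so that the combinatorial weight $(b'+1)(b''+1)$ cancels against the volume ratio supplied by Theorem~\ref{volumeestimateasymptotic}, and then sums over the $m_1-1$ return angles with a factor $\tfrac12$ for unoriented counting. Your single sum over ordered pairs $(a,b)$ with a global prefactor $\tfrac12$ is just a repackaging of the paper's per-angle constants $c^{loop}_{m_1}(j;\cH(m))$ together with its symmetry factor $|\Gamma_-|$ in the case $b'=b''$, and your empty-sum convention at $m_1=1$ matches the paper's geometric observation that $c^{loop}_{1}=0$.
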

	\begin{proof}
		Note that by geometric reasons any
		closed saddle connection joining a simple zero
		to itself bounds a cylinder filled with closed
		regular flat geodesics. Thus, for $m_1=1$ we
		get
		$$
		c^{loop}_{1}(\cH(m))=0\,,
		$$
		which justifies~\eqref{eq:saddle:connections:to:itself:all}
		for $m_1=1$. From now on we exclude this trivial case and
		assume that $m_1\ge 2$.
		
		We start with a more restrictive count. Namely, fix any
		integer $j$ within bounds $1\le j\le m_1-1$. Let us count
		first those closed saddle connection as above which split
		the total cone angle $2(m_1+1)\pi$ at the chosen zero of
		degree $m_1\ge 2$ into angles $(2j+1)\pi$ and
		$(2m_1-2j+1)\pi$. Our saddle connection has multiplicity one, which implies that there are no other
		homologous saddle connections. The condition $1\le j\le
		m_1-1$ automatically implies that our saddle connections
		\textit{do not} bound a cylinder.
		
		Denote by $c^{loop}_{m_1}(j;\cH(m_1,\dots))$ the Siegel---Veech
		constant corresponding to the number  of saddle connections
		of multiplicity one joining a fixed zero of degree $m_1$ to
		itself returning at the angle $(2j+1)\pi$ and not bounding
		a cylinder.
		
		Let $b'=j-1$, let $b''=m_1-j-1$. The saddle connections
		described in Corollary~\ref{cor:zero:to:itself:all:angles}
		correspond to ``creating a pair of holes assignment''
		in terminology of~\cite{PBC} applied to a
		fixed pair of zeroes of degrees $b', b''$ on a surface in a
		stratum $\cH(m')$, where $m=\{m_1,\dots,\}$ and
		$m'$ is obtained from $m$ by replacing the first
		entry (i.e. $m_1$) by two entries $b',b''$.
		
		Note that $m'$ corresponds to genus $g-1$,
		but has an extra entry with respect to $m$, so
		$\dim_{\C{}}\cH(m')=\dim_{\C{}}\cH(m)-1$.
		
		If $b'=b''$ we have ``$\gamma\to-\gamma$ symmetry'' in
		terminology of~\cite{PBC}, and this is the
		only possible symmetry. In notations
		of~\cite{PBC} we have $|\Gamma|=1$ and
		$$
		|\Gamma_-|=
		\begin{cases}
		2&\text{if }j=m_1/2-1\\
		1&\text{otherwise}\,,
		\end{cases}
		$$
		
		We are in the setting of Problem 1 from section 13.2
		in~\cite{PBC} when all the zeroes are labeled.
		Applying formula 13.1
		from~\cite{PBC} from which we remove all terms
		containing symbols $o(\cdot)$ responsible for
		unlabeling the zeroes we get
		$$
		c^{loop}_{m_1}(j;\cH(m_1,m_2,\dots))=
		\frac{1}{|\Gamma_-|}
		\cdot(b'+1)(b''+1)\cdot
		\frac{\nu_1\big(\cH(m')\big)}{\nu_1\big(\cH(m)\big)}\,.
		$$
		Applying~\eqref{eq:asymptotic:formula:for:the:volume}
		to the ratio of volumes we get
		$$
		\frac{\nu_1\big(\cH_1(m')\big)}{
			\nu_1\big(\cH_1(m)\big)}=
		\frac{m_1+1}{(b'+1)(b''+1)}
		\cdot\frac{1+\varepsilon(m')}{1+\varepsilon(m)}\,.
		$$
		Bounds~\eqref{eq:asymptotic:formula:for:the:volume:bound}
		now imply that
		\begin{equation}
		\label{eq:universal:bound:for:loop}
		\sup_{g\ge 2}
		\ g\cdot
		\max_{\substack{{m\in\mathbb{Y}_{2g - 2}}\\
				m_1\in m;\ 1\le j\le m_1-1\\
				\cH(m)\text{ is connected}}}
		\left|
		\frac{1+\varepsilon(m')}{1+\varepsilon(m)}
		-1\right|
		=:\ubound^{loop}<+\infty
		\end{equation}
		and we conclude that $c^{loop}_{m_1}(j;\cH(m_1,m_2,\dots))$
		satisfies
		\begin{equation}
		\label{eq:loop:restrictive}
		c^{loop}_{m_1}(j;\cH(m_1,m_2,\dots))=
		\begin{cases}
		\cfrac{(m_1+1)}{2}
		\cdot\big(1+\varepsilon_{m_1;j}(m)\big)
		&\text{if }j=(\frac{m_1}{2}-1)\\
		(m_1+1)
		\cdot\big(1+\varepsilon_{m_1;j}(m)\big)
		&\text{otherwise}\,,
		\end{cases}
		\end{equation}
		where
		\begin{equation*}
		%\label{eq:saddle:connections:to:itself}
		\max_{m\in\mathbb{Y}_{2g - 2}} |\varepsilon_{m_1;j}(m)|
		\le \frac{\ubound^{loop}}{g}\,.
		\end{equation*}
		
		Now we pass to the count with no restrictions on the
		return angle. We have to take the sum of all Siegel--Veech
		constants as in~\eqref{eq:loop:restrictive}
		over all possible return angles, where the return angle
		$(2j+1)\pi$ is equivalent to the return angle
		$(2m_1-2j+1)\pi$ for we are counting unoriented saddle
		connections. Thus, letting $j$ run all the range
		$1,2,\dots,m_1-1$ of possible values, we count each
		configuration twice with exception for the symmetric
		situation when $m_1$ is odd and $j=(m_1+1)/2$. However, in
		this symmetric situation we have extra factor $1/2$
		in~\eqref{eq:loop:restrictive} and our counting
		formula~\eqref{eq:saddle:connections:to:itself:all} follows.
	\end{proof}
	
	\begin{Remark}
		Note that
		formula~\eqref{eq:saddle:connections:to:itself:all}
		suggests the following naive interpretation. Consider a conical
		point with angle $2\pi(m_1+1)$. There are $m_1+1$ ways
		to launch a trajectory in any chosen direction and $m_1-1$
		ways for such trajectory to come back since we do not count
		the trajectories returning at the angle $\pi$. Since we
		count \textit{unoriented} saddle connections we get
		$\frac{(m_1+1)(m_1-1)}{2}$ ways of pairing.
	\end{Remark}
	
	%------------------------------------------------------------
	\subsection*{Cylinders having a pair of distinct zeroes on its boundaries.}
	Consider any connected stratum of the form $\cH(m_1,m_2,\dots)$,
	i.e. one which has at least two distinct zeroes, where $m_1,m_2$
	denote their degrees. The situation when $m_1=m_2$
	is not excluded. We assume that $m_1,m_2\ge 1$, i.e. that we have
	true zeroes and not just marked points.

	Consider a configuration consisting of a flat cylinder 
    embedded into our translation surface 
    such that each of the two boundary components of the cylinder is
    represented by a single saddle connection joining a zero to itself.
    We first consider the situation when the two zeroes are distinct. 
    Such surface can be obtained following the construction represented in 
    Figure~\ref{fig:parallelogram:construction} 
    except that instead of identifying the two geodesic boundary components 
    of the surface in the middle picture, we attach to them a flat cylinder.

    By construction the two saddle connections bounding the cylinder are 
    homologous. We assume that there are no other saddle connections homologous 
    to them. 
	
	\begin{Corollary}
		\label{cor:distinct:zeroes:cylinder}
		There exists a universal constant $\ubound^{cyl} > 0$ such that the
		Siegel---Veech constant
		$c^{cyl}_{m_1,m_2}(\cH(m_1,m_2,\dots))$ corresponding to the
		number  of  configurations of saddle connections
		of multiplicity one
		which bound a cylinder with a fixed zero
		of degree $m_1$ on one boundary component of the cylinder
		and a fixed zero of degree $m_2$ on the other boundary
		component of the cylinder satisfies
		\begin{equation}
		\label{eq:cylinder:distinct:zeroes}
		c^{cyl}_{m_1,m_2}(\cH(m_1,m_2,\dots))
		=\cfrac{(m_1+1)(m_2+1)}{\dim_\C\cH(m)-2}
		\cdot\big(1+\varepsilon^{cyl}_{m_1,m_2}(m)\big)
		\end{equation}
		where
		\begin{equation}
		\label{eq:cylinder:distinct:zeroes:bound}
		\max_{m\in\mathbb{Y}_{2g - 2}} |\varepsilon^{cyl}_{m_1,m_2}(m)|
		\le \frac{\ubound^{cyl}}{g}\,.
		\end{equation}
	\end{Corollary}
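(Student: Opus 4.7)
The proof proceeds along the same lines as Corollaries~\ref{cor:sc} and~\ref{cor:zero:to:itself:all:angles}. One first invokes the appropriate Eskin--Masur--Zorich formula (from the treatment of cylinder configurations in Section~14 of~\cite{PBC}) for the Siegel--Veech constant of this specific configuration, which takes the shape
\begin{equation*}
c^{cyl}_{m_1,m_2}(\cH(m)) = \frac{C(m_1,m_2)}{\dim_{\C}\cH(m)-2}\cdot\frac{\nu_1(\cH_1(m'))}{\nu_1(\cH_1(m))}.
\end{equation*}
Here, the factor $1/(\dim_\C\cH(m)-2)$ arises from integration over the one-parameter family of cylinder moduli built into the Siegel--Veech construction for ``thick'' (cylindrical) configurations; $C(m_1,m_2)$ is a combinatorial factor accounting for the number of admissible ways to emanate a loop bounding the cylinder at each of the two cone points; and $m'$ is the ``reduced'' partition describing the translation surface obtained after pinching the cylinder and collapsing each boundary loop. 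An Euler characteristic count ($\chi=2-2g\mapsto2-2(g-1)$ upon collapsing the two boundary loops to points) forces $m'\in\mathbb{Y}_{2g-4}$; concretely, the two parts $m_1,m_2$ are modified to $m_1-1,\,m_2-1$, while $m_3,\dots,m_n$ are unchanged.

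Next, one estimates the volume ratio via Theorem~\ref{volumeestimateasymptotic} applied to $\nu_1(\cH_1(m))$ and $\nu_1(\cH_1(m'))$. Since the parts $m_3,\dots,m_n$ are shared between $m$ and $m'$, the corresponding factors cancel from both $\prod_i(m_i+1)$ and $\prod_i(m'_i+1)$, yielding
\begin{equation*}
\frac{\nu_1(\cH_1(m'))}{\nu_1(\cH_1(m))} = \frac{(m_1+1)(m_2+1)}{m_1\cdot m_2}\cdot\frac{1+\varepsilon(m')}{1+\varepsilon(m)}.
\end{equation*}
Substituting this into the previous display and using $C(m_1,m_2)=m_1\cdot m_2$, the factors $m_1\cdot m_2$ cancel, producing the claimed leading asymptotic $(m_1+1)(m_2+1)/(\dim_\C\cH(m)-2)$.

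The uniform error bound~\eqref{eq:cylinder:distinct:zeroes:bound} then follows from~\eqref{eq:asymptotic:formula:for:the:volume:bound}, exactly as~\eqref{eq:universal:bound:for:loop} was derived in the proof of Corollary~\ref{cor:zero:to:itself:all:angles}. Both $\varepsilon(m)$ and $\varepsilon(m')$ are individually of order $1/g$ (since $m'$ lies in genus $g-1$, still comparable to $g$), so the ratio $(1+\varepsilon(m'))/(1+\varepsilon(m))$ differs from $1$ by at most a universal multiple of $1/g$, uniformly over $m\in\mathbb{Y}_{2g-2}$ with $\cH(m)$ connected.

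The main obstacle is the careful extraction of the relevant Eskin--Masur--Zorich formula for this precise cylinder configuration---specifically, the verification that the combinatorial factor equals $C(m_1,m_2)=m_1m_2$ and the correct identification of the reduced partition $m'$, including the treatment of any stabilizer/symmetry factor $|\Gamma|$ of the configuration (which enters multiplicatively, analogously to the $|\Gamma_-|$ appearing in~\eqref{eq:loop:restrictive}). Once these ingredients are read off from the combinatorics of~\cite{PBC}, the asymptotic analysis is a routine adaptation of the arguments already used for the loop and saddle-connection corollaries above.
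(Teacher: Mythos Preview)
Your proposal is correct and follows essentially the same approach as the paper's proof. The paper identifies the configuration as the ``creation of pair of holes assignment'' in the terminology of~\cite{PBC}, sets $b'=m_1-1$, $b''=m_2-1$, invokes formula~13.1 from Section~13.2 of~\cite{PBC} (rather than Section~14) to obtain exactly your displayed expression with $C(m_1,m_2)=(b'+1)(b''+1)=m_1 m_2$, and notes explicitly that $|\Gamma|=|\Gamma_-|=1$ since the zeroes are labeled; the volume-ratio computation and error bound are then identical to yours.
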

	In the context of the above corollary the condition of
	``multiplicity one'' means that there are no other saddle
	connections homologous to the two ones on the boundaries of
	the cylinder.
	\begin{proof}
		Let $b':=m_1-1$, let $b'':=m_2-1$.
		In terminology of~\cite{PBC}
		the configurations of saddle connections
		described in Corollary~\ref{cor:distinct:zeroes:cylinder}
		correspond to the ``creation of pair of holes assignment'' applied to
		a fixed pair of zeroes of degrees $b', b''$ on a
		surface in a stratum $\cH(m')$, where
		$m=\{m_1,m_2,\dots,\}$ and $m'$ is obtained from
		$m$ by replacing the first two entries (i.e., the
		entries $m_1,m_2$) by the entries $m_1-1,m_2-1$.
		
		The new partition $m'$ represents the stratum in genus
		$g-1$, so $\dim_{\C{}}\cH(m')=\dim_{\C{}}\cH(m)-2$.
		
		We are in the setting of Problem 1 from section 13.2
		in~\cite{PBC} when all the zeroes are labeled.
		Thus we do not have any symmetries, $|\Gamma|=|\Gamma_-|=1$
		even if $b'=b''$.
		
		Applying formula 13.1 from~\cite{PBC} from
		which we remove all terms containing symbols $o(\cdot)$
		responsible for unlabeling the zeroes we get
		\begin{multline*}
		c^{cyl}_{m_1,m_2}(\cH(m_1,m_2,\dots))
		=\frac{(b'+1)(b''+1)}{\dim_\C\cH(m)-2}
		\cdot\frac{\nu_1\big(\cH(m')\big)}{\nu_1\big(\cH(m)\big)}
		=\frac{m_1\cdot m_2}{\dim_\C\cH(m)-2}
		\cdot\frac{\nu_1\big(\cH(m')\big)}{\nu_1\big(\cH(m)\big)}.
		\end{multline*}
		Applying~\eqref{eq:asymptotic:formula:for:the:volume}
		to the ratio of volumes we get
		$$
		\frac{\nu_1\big(\cH_1(m')\big)}{\nu_1\big(\cH_1(m)\big)}=
		\frac{(m_1+1)(m_2+1)}{m_1\cdot m_2}
		\cdot\frac{1+\varepsilon(m')}{1+\varepsilon(m)}\,.
		$$
		Bounds~\eqref{eq:asymptotic:formula:for:the:volume:bound}
		now imply that
		$$
		\sup_{g\ge 2}
		\ g\cdot
		\max_{\substack{{m\in\mathbb{Y}_{2g - 2}}\\
				m_1,m_2\in m\\
				\cH(m)\text{ is connected}}}
		\left|
		\frac{1+\varepsilon(m')}{1+\varepsilon(m)}
		-1\right|
		=:\ubound^{cyl}<+\infty
		$$
		and~\eqref{eq:cylinder:distinct:zeroes} follows.
	\end{proof}
	
	%------------------------------------------------------------
	\subsection*{Cylinders having the same fixed zero on both
		boundary components.}
	%
% 	Consider now configurations of cylinders
% 	filled with closed regular flat geodesics
% 	having the same zero at the two boundary components of the
% 	cylinder. We assume that there are no other saddle connections
% 	homologous to the two ones at the boundaries of the cylinder.
	%
	
    Consider a configuration consisting of a flat cylinder
    embedded into our translation surface
    with boundary components 
    represented by saddle connections joining the common saddle point to itself.
    We suppose that there are no other saddle connections homologous to the
    two boundary components of the cylinder.

\begin{figure}[hbt!]
%
%   FIGURE EIGHT CONSTRUCTION
%

\includegraphics[height=3.5cm,width=3.5cm]{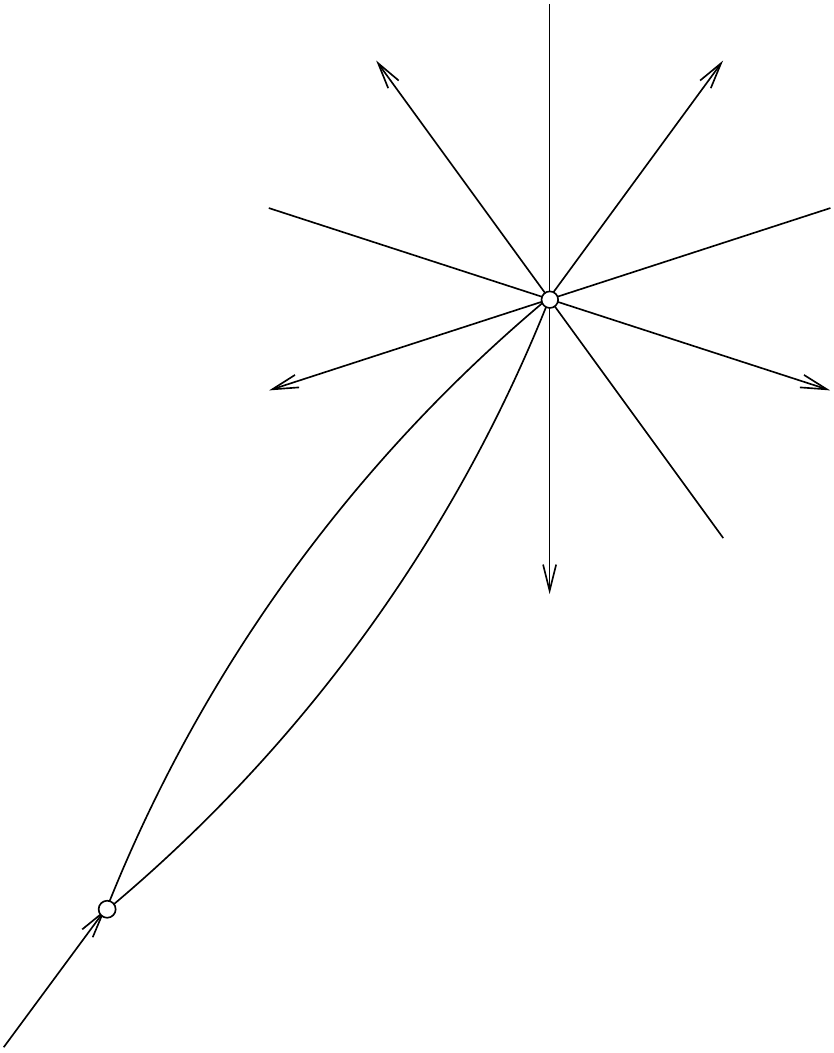} \qquad \includegraphics[height=4.5cm, width = 4.5cm]{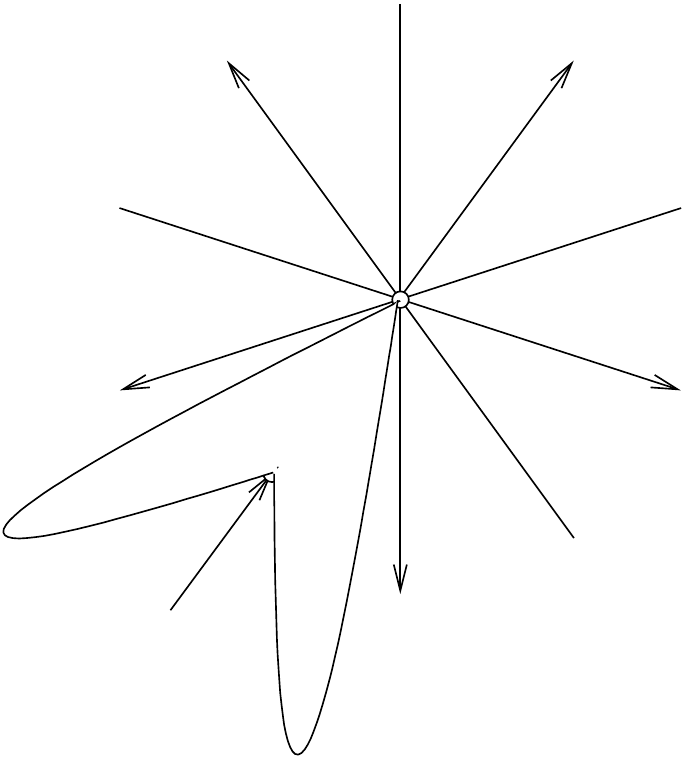} \qquad \includegraphics[height = 4.5cm, width = 4.5cm]{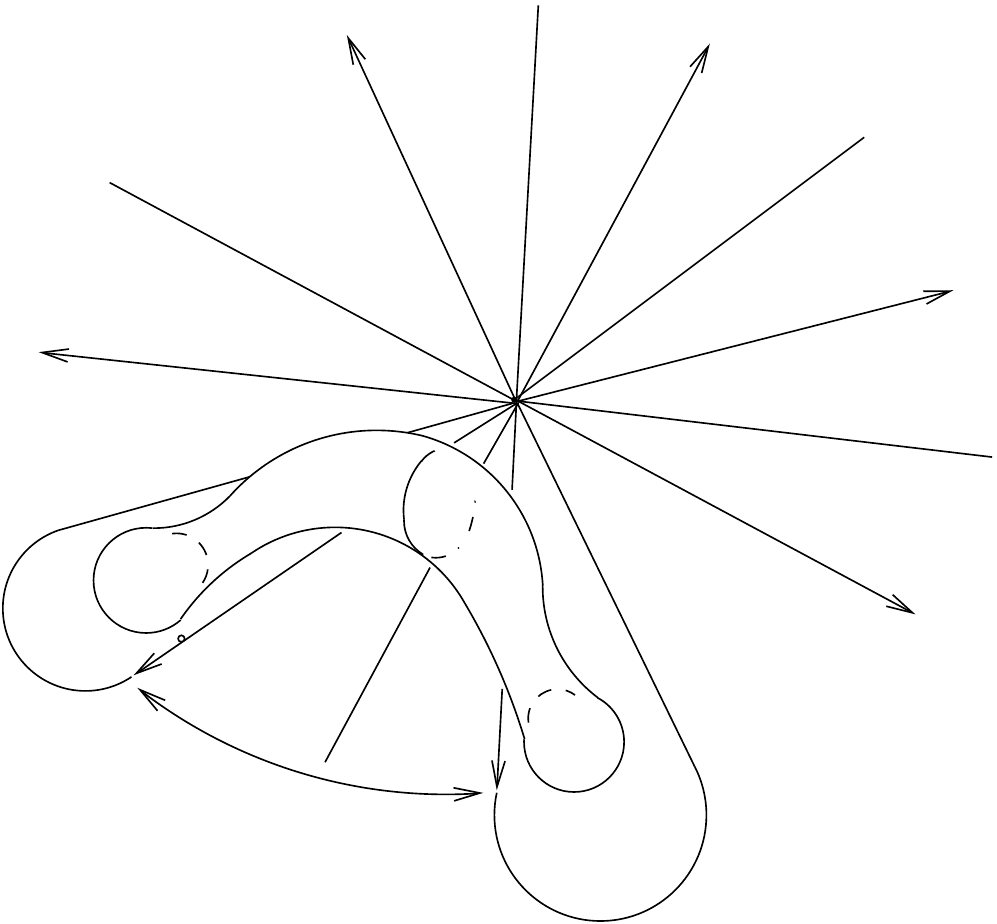}

\begin{picture}(0,0)(0,0)
\put(0,0){\begin{picture}(0,0)(0,0) \put(80,20){$2\pi(a'+1)$}
	\end{picture}}
\end{picture}

\caption{%\small
\label{pic:figure:8:constr} 
A flat cylinder bounded by two saddle connections 
joining the common saddle point to itself.
}
\end{figure}

    Figure~\ref{pic:figure:8:constr} (reproduced from Figure~10 in~\cite{PBC})
    describes how to create such a configuration from a translation surface 
    of genus $g-1$. We start by slitting a translation surface of genus $g-1$ along
    a geodesic segment with no saddle points in its interior. In this way we get 
    a surface with boundary as in the left picture. We identify the two 
    endpoints of the slit (as indicated in the middle picture) 
    getting a surface with geodesic boundary in the shape of figure eight.
    Finally we paste a flat cylinder to the two saddle connections forming
    two loops of figure eight and get a translation surface of genus $g$.
    It is easy to see that the construction is invertible.          	

	\begin{Corollary}
		\label{cor:figure:eight:fixed:zero}
		There exists a universal constant $\ubound^{handle} > 0$ such
		that the Siegel---Veech constant
		$c^{handle}_{m_1}(\cH(m_1\dots))$ corresponding to the
		number  of  configurations of saddle connections of
		multiplicity one which bound a cylinder having the same
		fixed zero of degree $m_1$ on both boundary components
		satisfies
		\begin{equation}
		\label{eq:figure:eight:fixed:zero}
		c^{handle}_{m_1}(\cH(m_1,\dots))
		=\cfrac{1}{2}\cdot\cfrac{(m_1+1)(m_1-1)}{\dim_\C\cH(m)-2}
		\cdot\big(1+\varepsilon^{handle}_{m_1}(m)\big)
		\end{equation}
		where
		\begin{equation}
		\label{eq:handle:bound}
		\max_{m\in\mathbb{Y}_{2g - 2}} |\varepsilon^{handle}_{m_1}(m)|
		\le \frac{\ubound^{handle}}{g}\,.
		\end{equation}
	\end{Corollary}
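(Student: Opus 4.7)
The plan is to mirror the proofs of Corollary \ref{cor:zero:to:itself:all:angles} and Corollary \ref{cor:distinct:zeroes:cylinder}, reducing the computation to an application of the Eskin--Masur--Zorich formulae from \cite{PBC} combined with the volume asymptotic \eqref{eq:asymptotic:formula:for:the:volume}. The figure-eight construction from Figure \ref{pic:figure:8:constr} shows that a configuration of this type is produced from a translation surface of genus $g-1$ lying in a stratum $\cH(m'')$ where $m''$ is obtained from $m=(m_1,m_2,\dots)$ by replacing the entry $m_1$ by a pair of entries $a',a''$ with $a'+a'' = m_1 - 2$ (one angular unit on each side of the figure eight comes from the cylinder itself), to which one then applies the ``creation of a handle'' assignment of \cite{PBC}. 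Note that $\dim_{\C}\cH(m'') = \dim_{\C}\cH(m) - 2$, since the genus drops by one while the number of zeroes increases by one.

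First I would fix an admissible pair $(a',a'')$ with $a'+a''=m_1-2$ and $a',a''\geq 0$, and let $c^{handle}_{m_1}(a',a'';\cH(m_1,\dots))$ denote the Siegel--Veech constant counting the corresponding subfamily of configurations. The pertinent formula from \cite{PBC} (the handle analogue of formula 13.1 with all unlabelling $o(\cdot)$ terms removed) yields
\begin{flalign*}
c^{handle}_{m_1}(a',a'';\cH(m_1,\dots)) = \frac{1}{|\Gamma_-|}\cdot\frac{(a'+1)(a''+1)}{\dim_{\C}\cH(m)-2}\cdot\frac{\nu_1\big(\cH(m'')\big)}{\nu_1\big(\cH(m)\big)},
\end{flalign*}
where $|\Gamma_-|=2$ accounts for the $\gamma\to-\gamma$ symmetry that is always present (the two boundary saddle connections of the cylinder are interchangeable), together with the extra factor $2$ appearing when $a'=a''$. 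Applying \eqref{eq:asymptotic:formula:for:the:volume} then gives
\begin{flalign*}
\frac{\nu_1\big(\cH(m'')\big)}{\nu_1\big(\cH(m)\big)} = \frac{m_1+1}{(a'+1)(a''+1)}\cdot\frac{1+\varepsilon(m'')}{1+\varepsilon(m)},
\end{flalign*}
and \eqref{eq:asymptotic:formula:for:the:volume:bound} together with \eqref{eq:universal:bound:for:loop} (applied with $m''$ in place of $m'$) implies that the relative error is $\mathcal{O}(g^{-1})$ uniformly in the decomposition $(a',a'')$.

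To obtain $c^{handle}_{m_1}$ one then sums over all ordered pairs $(a',a'')$ with $a'+a''=m_1-2$ and $a',a''\geq 0$, being careful with the double-counting. Since an ordered sum over these $m_1-1$ pairs counts each configuration twice (except when $m_1$ is even and $a'=a''=\frac{m_1-2}{2}$, which is counted once but then compensated by the extra factor in $|\Gamma_-|$), summing the above expression simplifies to
\begin{flalign*}
c^{handle}_{m_1}(\cH(m_1,\dots)) = \frac{1}{2}\cdot\frac{m_1+1}{\dim_{\C}\cH(m)-2}\sum_{a'+a''=m_1-2} 1 \cdot\big(1+\mathcal{O}(g^{-1})\big) = \frac{1}{2}\cdot\frac{(m_1+1)(m_1-1)}{\dim_{\C}\cH(m)-2}\cdot\big(1+\varepsilon^{handle}_{m_1}(m)\big),
\end{flalign*}
which yields \eqref{eq:figure:eight:fixed:zero} with the uniform bound \eqref{eq:handle:bound} coming directly from \eqref{eq:asymptotic:formula:for:the:volume:bound}.

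The main obstacle is bookkeeping the symmetry factors and the correct angular decomposition: one must verify that the ``creation of a handle'' assignment of \cite{PBC} indeed corresponds to the split $a'+a''=m_1-2$ (the $-2$ accounting for the two sides of the slit that become the common saddle point on the cylinder boundary), and that the only relevant symmetry across all $(a',a'')$ is the uniform $\gamma\to-\gamma$ symmetry of the cylinder's two boundary circles. The naive interpretation agrees: there are $m_1+1$ directions in which to emit the first saddle connection and $m_1-1$ directions in which to close the figure-eight (excluding the return at angle $\pi$ which would produce a degenerate configuration), divided by $2$ to pass to unoriented configurations, giving exactly $\tfrac{1}{2}(m_1+1)(m_1-1)$, with the denominator $\dim_{\C}\cH(m)-2$ arising as in Corollary \ref{cor:distinct:zeroes:cylinder} from the codimension of the boundary stratum produced by pinching the cylinder.
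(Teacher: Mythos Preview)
Your final numerical answer is correct, but the argument rests on a misidentification of the boundary stratum, and the right answer emerges only because two errors cancel after you apply the volume asymptotic.

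In the figure-eight (handle) configuration the degenerated surface of genus $g-1$ lies in the stratum $\cH(m')$ obtained from $m$ by replacing $m_1$ by a \emph{single} entry $m_1-2$; the pair $(a',a'')$ with $a'+a''=m_1-2$ records only the angular sectors at that zero, not a splitting of the zero into two. This is the ``figure eight assignment'' of \cite{PBC}, and the associated combinatorial factor is $a+1=m_1-1$, \emph{independent} of $(a',a'')$. Your stratum $\cH(m'')$, with $m_1$ replaced by two entries $a',a''$, is the one that arises in the ``creating a pair of holes'' assignment relevant to Corollary~\ref{cor:zero:to:itself:all:angles}, not here. A symptom: your own dimension count is off, since dropping the genus by one and adding one zero gives $\dim_\C\cH(m'')=\dim_\C\cH(m)-1$, not $-2$. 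With the correct stratum $m'$ the codimension is genuinely $2$, the per-$(a',a'')$ Siegel--Veech constant is $\frac{1}{|\Gamma_-|}\cdot\frac{m_1-1}{\dim_\C\cH(m)-2}\cdot\frac{\nu_1(\cH(m'))}{\nu_1(\cH(m))}$ with $|\Gamma_-|=1$ unless $a'=a''$, and the volume ratio is $\frac{m_1+1}{m_1-1}\cdot\big(1+O(g^{-1})\big)$. Summing over the $m_1-1$ ordered pairs and halving then gives \eqref{eq:figure:eight:fixed:zero}. Your route reaches the same endpoint only because your spurious factor $(a'+1)(a''+1)$ in the combinatorial constant is exactly undone by the spurious factor $(a'+1)(a''+1)$ in the denominator of your volume ratio---an artifact of the volume asymptotic~\eqref{eq:asymptotic:formula:for:the:volume}, not a feature of the Siegel--Veech formula.
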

	Note that we do not specify the angles between the pair of
	saddle connections bounding the cylinder.
	
	\begin{proof}
		Note that by geometric reasons
		the common zero located at the boundaries of the
		cylinder has order at least $2$. Thus
		$$
		c^{handle}_{1}(\cH(m))=0\,,
		$$
		which justifies~\eqref{eq:figure:eight:fixed:zero} for
		$m_1=1$. From now on we exclude this trivial case and
		assume that $m_1\ge 2$.
		
		Let $a:=m_1-2$; let $a'+a''=a$ be a partition of $a$ into
		an ordered sum of nonnegative integers. The configurations
		described in Corollary~\ref{cor:figure:eight:fixed:zero}
		correspond to the ``figure eight assignment'' applied to a
		fixed zero of degree $a=m_1-2$ on a surface in the stratum
		$\cH(m')$, where $m=\{m_1,\dots,\}$ and $m'$ is obtained
		from $m$ by replacing the first entry (i.e., the entry
		$m_1$) by the entry $m_1-2$.
		
		The new partition $m'$ represents the stratum in genus
		$g-1$, so $\dim_{\C{}}\cH(m')=\dim_{\C{}}\cH(m)-2$. The
		partition $a'+a''=a$ encodes the angles between saddle
		connections at the zero. In the setting of Problem 1 from
		section 13.2 in~\cite{PBC}. we have
		$|\Gamma|=1$ and
		$$
		|\Gamma_-|=
		\begin{cases}
		2&\text{if }j=a'=a''\\
		1&\text{otherwise}
		\end{cases}
		$$
		
		Applying formula on page 135 of~\cite{PBC}
		for any fixed partition $a'+a''=a=m_1-2$, where
		the combinatorial factor is computed on top of page 140
		in~\cite{PBC}, we get the value
		$$
		\frac{1}{|\Gamma_-|}\cdot\frac{a+1}{\dim_\C\cH(m)-2}
		\cdot\frac{\nu_1\big(\cH(m')\big)}{\nu_1\big(\cH(m)\big)}
		$$
		for the Siegel--Veech constants for the more restricted
		count when the pair $(a',a'')$ is fixed. Pairs of
		partitions $(a',a'')$ and $(a'',a')$ of $a=m-2$ represent
		the same configurations in our setting. Thus, the sum over
		all unordered partitions $a'+a''=m_1-2$, where
		$a'=0,1,\dots,m_1-2$, gives
		$$
		c^{cyl}_{m_1}(\cH(m_1,\dots))
		=
		\frac{m_1-1}{2}\cdot\frac{m_1-1}{\dim_\C\cH(m)-2}
		\cdot\frac{\nu_1\big(\cH(m')\big)}{\nu_1\big(\cH(m)\big)}.
		$$
		Applying~\eqref{eq:asymptotic:formula:for:the:volume}
		to the ratio of volumes we get
		$$
		\frac{\nu_1\big(\cH_1(m')\big)}{
			\nu_1\big(\cH_1(m)\big)}=
		\frac{m_1+1}{m_1-1}
		\cdot\frac{1+\varepsilon(m')}{1+\varepsilon(m)}\,.
		$$
		Bounds~\eqref{eq:asymptotic:formula:for:the:volume:bound}
		now imply that
		\begin{equation*}
		%\label{eq:universal:bound:for:handle}
		\sup_{g\ge 2}
		\ g\cdot
		\max_{\substack{{m\in\mathbb{Y}_{2g - 2}}\\
				2\le m_1\in m\\
				\cH(m)\text{ is connected}}}
		\left|
		\frac{1+\varepsilon(m')}{1+\varepsilon(m)}
		-1\right|
		=:\ubound^{handle}<+\infty
		\end{equation*}
		and~\eqref{eq:handle:bound} follows.
	\end{proof}
	
	%------------------------------------------------------------
	\subsection*{Count of all cylinders of multiplicity one.}
	Combining results of Corollaries~\ref{cor:distinct:zeroes:cylinder}
	and~\ref{cor:figure:eight:fixed:zero} we get the following
	result.
	
	\begin{Theorem}
		\label{th:SV:for:all:multiplicity:1:cylinders}
		The  Siegel---Veech constant $c^{(1)}_{cyl}(\cH(m))$ for the
		number of all cylinders of multiplicity one on a surface of
		area one in a connected stratum $\cH(m)$ has the form
		\begin{equation}
		\label{eq:all:mult:1:cylinders}
		c^{(1)}_{cyl}(\cH(m))
		=\frac{1}{2}
		\left((\dim_{\C}\cH(m)-2)
		-\frac{1}{\dim_{\C}\cH(m)-2}\right)
		\cdot\left(1+\varepsilon_{cyl}(m)\right)
		\end{equation}
		where
		\begin{equation}
		\label{eq:all:mult:1:cylinders:bound}
		\max_{m\in\mathbb{Y}_{2g - 2}} |\varepsilon_{cyl}(m)|
		\le \frac{\max(\ubound^{cyl},\ubound^{handle})}{g}\,.
		\end{equation}
		and the universal constants $\ubound^{cyl},\ubound^{handle}$
		are defined in equations~\eqref{eq:cylinder:distinct:zeroes:bound}
		and~\eqref{eq:handle:bound}.
		
		Under the same assumptions as above, the  Siegel---Veech
		constant $c^{(1)}_{area}(\cH(m))$ corresponding to the
		weighted count of cylinders of multiplicity one, with the
		area of the cylinder taken as the weight, has the
		following form
		\begin{equation}
		\label{eq:mult:1:carea}
		c^{(1)}_{area}(\cH(m))
		=\frac{c^{(1)}_{cyl}(\cH(m))}{\dim_\C{}\cH(m)-1}
		=\frac{1}{2}
		\cdot\big(1+\varepsilon_{area}(m)\big)\,,
		\end{equation}
		where
		\begin{equation}
		\label{eq:mult:1:carea:bound}
		\max_{m\in\mathbb{Y}_{2g - 2}} |\varepsilon_{area}(m)|
		\le \frac{B}{g}\,,
		\end{equation}
		and $B$ is a universal constant.
	\end{Theorem}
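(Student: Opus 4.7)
The plan is to express $c^{(1)}_{cyl}(\cH(m))$ as a sum over the two possible combinatorial types of multiplicity-one cylinders and then apply Corollaries \ref{cor:distinct:zeroes:cylinder} and \ref{cor:figure:eight:fixed:zero}. A flat cylinder of multiplicity one is bounded by two saddle connections, each joining some zero to itself or joining two zeros; the two boundary zeros are either distinct, in which case the cylinder is counted by $c^{cyl}_{m_i,m_j}$, or coincide, in which case it is of the figure-eight type counted by $c^{handle}_{m_i}$. Hence
\begin{equation*}
c^{(1)}_{cyl}(\cH(m)) \;=\; \sum_{1\le i<j\le n} c^{cyl}_{m_i,m_j}\big(\cH(m)\big) \;+\; \sum_{i=1}^{n} c^{handle}_{m_i}\big(\cH(m)\big).
\end{equation*}

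Substituting the asymptotic expressions from \eqref{eq:cylinder:distinct:zeroes} and \eqref{eq:figure:eight:fixed:zero}, and setting $k := \dim_{\C} \cH(m) = 2g + n - 1$, yields
\begin{equation*}
c^{(1)}_{cyl}(\cH(m)) \;=\; \frac{1 + O(1/g)}{k-2}\left[ \sum_{i<j}(m_i+1)(m_j+1) \;+\; \frac{1}{2}\sum_{i}(m_i+1)(m_i-1) \right],
\end{equation*}
where the $O(1/g)$ term is uniform in $m$ thanks to the uniform bounds $\ubound^{cyl}, \ubound^{handle}$ and the fact that every summand is nonnegative (so no cancellation can amplify errors).

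The second step is a short algebraic simplification. Setting $S := \sum_i (m_i+1) = |m|+n = k-1$ and $Q := \sum_i (m_i+1)^2$, I rewrite $\sum_{i<j}(m_i+1)(m_j+1) = \tfrac{1}{2}(S^2-Q)$ and $\tfrac{1}{2}\sum_i(m_i+1)(m_i-1) = \tfrac{1}{2}\sum_i\big((m_i+1)^2 - 2(m_i+1)\big) = \tfrac{1}{2}(Q-2S)$. The $Q$ terms cancel, leaving $\tfrac{1}{2}S(S-2) = \tfrac{1}{2}(k-1)(k-3)$. Therefore
\begin{equation*}
c^{(1)}_{cyl}(\cH(m)) \;=\; \frac{(k-1)(k-3)}{2(k-2)}\big(1+O(1/g)\big) \;=\; \frac{1}{2}\left( (k-2) - \frac{1}{k-2} \right)\big(1+O(1/g)\big),
\end{equation*}
which is \eqref{eq:all:mult:1:cylinders} with the error bound \eqref{eq:all:mult:1:cylinders:bound} controlled by $\max(\ubound^{cyl},\ubound^{handle})/g$ (noting $k \ge g$).

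For the area Siegel--Veech constant, I would invoke the known Eskin--Masur--Zorich identity expressing the weighted (by area) count of multiplicity-one cylinders as the unweighted count divided by $\dim_{\C}\cH(m)-1 = k-1$, which is the first equality in \eqref{eq:mult:1:carea}. Dividing the expression above by $k-1$ gives $\tfrac{1}{2}\cdot\tfrac{k-3}{k-2}(1+O(1/g)) = \tfrac{1}{2}(1 - \tfrac{1}{k-2})(1+O(1/g)) = \tfrac{1}{2}(1+O(1/g))$, since $1/(k-2) = O(1/g)$. The only real issue is bookkeeping: one must verify that combining finitely many error terms $\varepsilon^{cyl}_{m_i,m_j}$ and $\varepsilon^{handle}_{m_i}$, each of order $1/g$ but coming from possibly many summands, still yields a uniform $O(1/g)$ bound on the whole sum. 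This works here because of the positivity of all contributions and the exact cancellation of the $Q$ term in the algebraic identity, so the uniform bound persists after normalization by the leading-order quantity.
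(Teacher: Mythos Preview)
Your proof is correct and follows essentially the same route as the paper's: decompose $c^{(1)}_{cyl}$ into the sum of the $c^{cyl}_{m_i,m_j}$ over unordered pairs plus the $c^{handle}_{m_i}$, invoke Corollaries~\ref{cor:distinct:zeroes:cylinder} and~\ref{cor:figure:eight:fixed:zero}, use nonnegativity of all summands to pull out a single multiplicative error bounded by $\max(\ubound^{cyl},\ubound^{handle})/g$, and then perform the same algebraic simplification $\tfrac12(S^2-Q)+\tfrac12(Q-2S)=\tfrac12 S(S-2)$ with $S=\dim_{\C}\cH(m)-1$. The only cosmetic difference is that the paper attributes the identity $c^{(1)}_{area}=c^{(1)}_{cyl}/(\dim_{\C}\cH(m)-1)$ to Vorobets rather than to Eskin--Masur--Zorich.
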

	\begin{proof}
		We are counting maximal cylinders of multiplicity one
		filled with closed flat geodesics, i.e. we assume that
		there are no saddle connections homologous to the waist
		curve of the cylinder outside of the cylinder. To count all
		such cylinders we have to sum up the Siegel--Veech
		constants $c^{cyl}_{m_i,m_j}(\cH(m_1,m_2,\dots,m_n))$ for
		all pairs $1\le i< j\le n$ and the Siegel--Veech constants
		$c^{handle}_{m_i}(\cH(m_1,\dots))$ for all $i$ in the range
		$1\le i \le n$. Representing the sum over all unordered
		distinct pairs $(i,j)$ as half of the sum of ordered
		distinct pairs and
		combining~\eqref{eq:cylinder:distinct:zeroes}
		and~\eqref{eq:figure:eight:fixed:zero} we get
		\begin{multline*}
		c^{(1)}_{cyl}(\cH(m_1,\dots,m_n))=\frac{1}{2}\cdot\frac{1}{\dim_\C\cH(m)-2}
		\cdot\Bigg(
		\sum_{\substack{ i,j=1\\i\neq j}}^n
		(m_i+1)(m_j+1)
		\cdot\big(1+\varepsilon^{cyl}_{m_i,m_j}(m)\big)\\
		+
		\sum_{i=1}^n
		\big((m_i+1)^2 - 2(m_i+1)\big)
		\cdot\big(1+\varepsilon^{handle}_{m_i}(m)\big)
		\Bigg)\,.
		\end{multline*}
		Bounds~\eqref{eq:cylinder:distinct:zeroes:bound}
		and~\eqref{eq:handle:bound} for
		$\varepsilon^{cyl}_{m_i,m_j}(m)$ and
		$\varepsilon^{handle}_{m_i}(m)$ imply that there exists
		$\varepsilon_{cyl}(m)$ satisfying
		bounds~\eqref{eq:all:mult:1:cylinders:bound} such that the
		above expression takes the form
		$$
		\frac{1}{2}\cdot\frac{1}{\dim_\C\cH(m)-2}
		\cdot
		\left(
		\left(\sum_{i=1}^n (m_i+1)\right)^2 -
		2\sum_{i=1}^n (m_i+1)
		\right)
		\cdot\big(1+\varepsilon_{cyl}(m)\big)
		$$
		Note that
		$$
		\dim_\C\cH(m_1,\dots,m_n)-1=
		\sum_{i=1}^n (m_i+1)=(2g-2)+n=|m|+\ell(m)\,,
		$$
		where $|m|$ and $\ell(m)$ are the size and the length
		of the partition $m$ respectively. Hence, we can
		represent the latter expression for
		$c^{(1)}_{cyl}(\cH(m))$
		as
		$$
		c^{(1)}_{cyl}(\cH(m))=
		\frac{1}{2}\cdot\frac{(\dim_\C\cH(m)-2)^2-1}{\dim_\C\cH(m)-2}
		\cdot\big(1+\varepsilon_{cyl}(m)\big)\,,
		$$
		where $\varepsilon_{cyl}(m)$ satisfies
		bounds~\eqref{eq:all:mult:1:cylinders:bound}. This
		completes the proof of the first part of the statement.
		
		By the formula of Vorobets (see~(2.16)
		in~\cite{ERG} or the original
		paper~\cite{PGTS}),  the Siegel---Veech constant
		$c_{area}(\cH(m))$ is expressed in terms of the
		Siegel---Veech constants of configurations of homologous
		closed saddle connections as follows
		\begin{equation}
		\label{eq:carea:formula}
		c_{area}(\cH(m))=
		\cfrac{1}{\dim_\C{}\cH(m)-1}\cdot\sum_{q=1}^{g-1} q\cdot
		\sum_{\substack{Configurations\ \cC\\ containing\ q\ cylinders}}
		c_\cC(\cH(m))\,.
		\end{equation}
		The  Siegel---Veech constant
		$c^{(1)}_{area}(\cH(m))$ corresponding to the weighted
		count of cylinders of multiplicity one represents the
		term of the above sum corresponding to $q=1$, namely,
		$$
		c^{(1)}_{area}(\cH(m)):=
		\frac{c^{(1)}_{cyl}(\cH(m))}{\dim_\C{}\cH(m)-1}\,.
		$$
		Expression~\eqref{eq:all:mult:1:cylinders} for
		$c^{(1)}_{cyl}(\cH(m))$ and
		bound~\eqref{eq:all:mult:1:cylinders:bound} for
		$\varepsilon_{cyl}(m)$ imply existence of a universal
		constant $B$ such that the ratio
		$\cfrac{c^{(1)}_{cyl}(\cH(m))}{\dim_\C{}\cH(m)-1}$ can be
		represented in the form~\eqref{eq:mult:1:carea} with
		$\varepsilon_{area}(m)$ satisfying
		bound~\eqref{eq:mult:1:carea:bound}.
	\end{proof}
	
	%------------------------------------------------------------
	\subsection*{Arithmetic nature of Siegel--Veech constants.}
	By result of Eskin and Okounkov~\cite{ANBCTV} the
	Masur--Veech volume $\cH(m)$ of any stratum in genus $g$
	has the form of a rational number multiplied by $\pi^{2g}$.
	The Siegel--Veech constants $c^{sc}_{m_1,m_2}$ and
	$c^{sc;2}_{m_1,m_2}$ responsible for the count of saddle
	connections joining \textit{distinct} zeroes are expressed
	as a rational factor times the ratio of volumes
	$\nu_1\big(\cH(m')\big)/\nu_1\big(\cH(m)\big)$ of strata in the same genus,
	so these Siegel--Veech constants are rational numbers.
	
	The Siegel--Veech constants $c^{loops}_{m_1},\,
	c^{cyl}_{m_1,m_2},\, c^{handle}_{m_1},\, c^{(1)}_{cyl},\,
	c^{(1)}_{area},\, c_{area}$ responsible for the count of
	saddle connections going from a zero to itself are also
	expressed as a rational factor times the ratio
	$\nu_1\big(\cH(m')\big)/\nu_1\big(\cH(m)\big)$, but this time the stratum
	$\nu_1\big(\cH(m')\big)$ corresponds to genus $g-1$ while the stratum
	$\nu_1\big(\cH(m)\big)$ corresponds to genus $g$. Thus these latter
	Siegel--Veech constants have the form of a rational number
	divided by $\pi^2$.
	
	%------------------------------------------------------------
	\subsection*{Final remark.}
	It was conjectured in~\cite{VSDCLG} that the
	Siegel--Veech constant $c_{area}$ tends to
	$\frac{1}{2}$ uniformly for all nonhyperelliptic connected
	components of all strata as genus tends to infinity:
	\begin{equation}
	\label{eq:conj:carea}
	\lim_{g\to\infty} c_{area}(\cH^{comp}(m)) = \frac{1}{2}\,.
	\end{equation}
	Theorem~\ref{th:SV:for:all:multiplicity:1:cylinders}
	proves the uniform asymptotic lower bound for
	all connected strata $\cH(m)$:
	$$
	\liminf_{g\to\infty} c_{area}(\cH(m)) \ge \frac{1}{2}
	$$
	and shows that the conjecture~\eqref{eq:conj:carea} for
	connected strata is equivalent to conjectural vanishing of
	the contribution of configurations with $q\ge 2$ cylinders
	in formula~\eqref{eq:carea:formula} uniformly for all
	connected strata in large genera.
	The conjectural asymptotic~\eqref{eq:conj:carea} was recently proved first by A.~Aggarwal in~\cite{LGAC} and then by
    D.~Chen, M.~M\"oller, A.~Sauvaget and D.~Zagier in~\cite{VSCC} by completely different methods.
\smallskip

\noindent\textbf{Acknowledgements.} We are grateful to 
the anonymous referee for helpful
suggestions which allowed to improve the presentation.

\end{document}